\RequirePackage{fix-cm}
\documentclass[11pt,letterpaper,11pt,oneside,leqno, aop,noinfoline, preprint]{imsart}
\usepackage[utf8]{inputenc}
\usepackage{color}
\usepackage{verbatim}
\usepackage{refstyle}
\usepackage{wrapfig}
\usepackage{mathrsfs}
\usepackage{mathtools}
\usepackage{enumitem}
\usepackage{amsmath}
\usepackage{amsthm}
\usepackage{amssymb}
\usepackage[unicode=true,
 bookmarks=true,bookmarksnumbered=false,bookmarksopen=false,
 breaklinks=false,pdfborder={0 0 0},pdfborderstyle={},backref=false,colorlinks=true]
 {hyperref}
\hypersetup{pdftitle={Liouville first-passage percolation: subsequential scaling limits at high temperature},
 pdfauthor={Jian Ding and Alexander Dunlap}}
\setattribute{journal}{name}{}

\makeatletter


\AtBeginDocument{\providecommand\subsecref[1]{\ref{subsec:#1}}}
\AtBeginDocument{\providecommand\secref[1]{\ref{sec:#1}}}
\AtBeginDocument{\providecommand\figref[1]{\ref{fig:#1}}}
\AtBeginDocument{\providecommand\eqref[1]{\ref{eq:#1}}}
\AtBeginDocument{\providecommand\lemref[1]{\ref{lem:#1}}}
\AtBeginDocument{\providecommand\criref[1]{\ref{cri:#1}}}
\AtBeginDocument{\providecommand\corrref[1]{\ref{corr:#1}}}
\AtBeginDocument{\providecommand\thmref[1]{\ref{thm:#1}}}
\AtBeginDocument{\providecommand\propref[1]{\ref{prop:#1}}}
\AtBeginDocument{\providecommand\enuref[1]{\ref{enu:#1}}}
\pdfpageheight\paperheight
\pdfpagewidth\paperwidth

\RS@ifundefined{subsecref}
  {\newref{subsec}{name = \RSsectxt}}
  {}
\RS@ifundefined{thmref}
  {\def\RSthmtxt{theorem~}\newref{thm}{name = \RSthmtxt}}
  {}
\RS@ifundefined{lemref}
  {\def\RSlemtxt{lemma~}\newref{lem}{name = \RSlemtxt}}
  {}

\numberwithin{equation}{section}
\numberwithin{figure}{section}
\numberwithin{table}{section}
\theoremstyle{plain}
\newtheorem{thm}{\protect\theoremname}[section]
\theoremstyle{remark}
\newtheorem{rem}[thm]{\protect\remarkname}
\theoremstyle{plain}
\newtheorem{criterion}[thm]{\protect\criterionname}
\theoremstyle{plain}
\newtheorem{lem}[thm]{\protect\lemmaname}
\theoremstyle{plain}
\newtheorem{cor}[thm]{\protect\corollaryname}
\theoremstyle{definition}
\newtheorem{defn}[thm]{\protect\definitionname}
\theoremstyle{plain}
\newtheorem{prop}[thm]{\protect\propositionname}

\newref{rem}{name=Remark~}
\newref{lem}{name=Lemma~}
\newref{thm}{name=Theorem~}
\newref{prop}{name=proposition~}
\newref{cond}{name=Condition~}
\newref{prp}{name=Property~}
\newref{corr}{name=Corollary~}
\newref{cri}{name=Criterion~}
\newref{sec}{name=Section~}
\newref{fig}{name=Figure~}
\newref{prop}{name=Proposition~}
\newref{subsec}{name=Subsection~}
\newref{eq}{name=,refcmd=(\ref{#1})}
\usepackage{tikz}
\usepackage{tikz-dimline}
\usetikzlibrary{patterns}
\pgfplotsset{compat=1.12}

\@ifundefined{showcaptionsetup}{}{%
 \PassOptionsToPackage{caption=false}{subfig}}
\usepackage{subfig}
\makeatother

\providecommand{\corollaryname}{Corollary}
\providecommand{\criterionname}{Criterion}
\providecommand{\definitionname}{Definition}
\providecommand{\lemmaname}{Lemma}
\providecommand{\propositionname}{Proposition}
\providecommand{\remarkname}{Remark}
\providecommand{\theoremname}{Theorem}

\begin{document}
\global\long\def\Var{\operatorname{Var}}

\global\long\def\Cov{\operatorname{Cov}}

\global\long\def\CV{\operatorname{CV}}

\global\long\def\Bernoulli{\operatorname{Bernoulli}}

\global\long\def\dist{\operatorname{dist}}

\global\long\def\diam{\operatorname{diam}}

\begin{frontmatter}

\title{Liouville first-passage percolation: subsequential scaling limits
at high temperature}

\runtitle{LFPP: subsequential scaling limits at high temperature}
\begin{aug}

\author{Jian Ding\thanksref{gjd}\ead[label=jd]{dingjian@wharton.upenn.edu}\and
Alexander Dunlap\thanksref{gajd}\ead[label=ajd]{ajdunl2@stanford.edu}}

\runauthor{J.~Ding and A.~Dunlap}

\thankstext{gjd}{Partially supported by NSF grant DMS-1455049 and an Alfred Sloan fellowship.}
\thankstext{gajd}{Partially supported by an NSF Graduate Research Fellowship.}

\affiliation{University of Pennsylvania and Stanford University}

\address{Statistics Department, The Wharton School\\
University of Pennsylvania\\
3730 Walnut Street\\
Philadelphia, PA 19104 USA\\
\printead{jd}}
\begin{aug}
\end{aug}

\address{Mathematics Department\\
Stanford University\\
450 Serra Mall, Building 380\\
Stanford, CA 94305 USA\\
\printead{ajd}}

\end{aug}

\begin{abstract}
Let $\{Y_{\mathfrak{B}}(x)\,:\,x\in\mathfrak{B}\}$ be a discrete
Gaussian free field in a two-dimensional box $\mathfrak{B}$ of side
length $S$ with Dirichlet boundary conditions. We study Liouville
first-passage percolation: the shortest-path metric in which each
vertex $x$ is given a weight of $e^{\gamma Y_{\mathfrak{B}}(x)}$
for some $\gamma>0$. We show that for sufficiently small but fixed
$\gamma>0$, for any sequence of scales $\{S_{k}\}$ there exists
a subsequence along which the appropriately scaled and interpolated
Liouville FPP metric converges in the Gromov–Hausdorff sense to a
random metric on the unit square in $\mathbf{R}^{2}$. In addition,
all possible (conjecturally unique) scaling limits are homeomorphic
by bi-Hölder-continuous homeomorphisms to the unit square with the
Euclidean metric.
\end{abstract}
\begin{keyword}[class=MSC]
\kwd[Primary ]{60K35}
\kwd[; secondary ]{60G60}
\kwd{60B43}
\end{keyword}

\begin{keyword}
\kwd{Liouville first-passage percolation}
\kwd{discrete Gaussian free field}
\kwd{Russo--Seymour--Welsh method}
\kwd{Liouville quantum gravity}
\end{keyword}
\end{frontmatter}

\section{Introduction}

We consider Liouville first-passage percolation; i.e., first-passage
percolation on the exponential of the discrete Gaussian free field.
Given a box (by which we mean a discrete rectangle) $\mathfrak{B}\subset\mathbf{Z}^{2}$,
define $\overline{\mathfrak{B}}$, the \emph{blow-up} of $\mathfrak{B}$,
as the box three times larger in each dimension centered around $\mathfrak{B}$,
and define $\partial\overline{\mathfrak{B}}$ to be the set of points
whose Euclidean distance from $\overline{\mathfrak{B}}$ is exactly
$1$. What we will call the discrete Gaussian free field on $\mathfrak{B}$
is the restriction to $\mathfrak{B}$ of the standard discrete Gaussian
free field with Dirichlet boundary conditions on $\overline{\mathfrak{B}}$.
This is the mean-zero Gaussian process $Y_{\mathfrak{B}}(x)$ such
that $Y_{\mathfrak{B}}(x)=0$ for all $x\in\partial\overline{\mathfrak{B}}$
and $\mathbf{E}Y_{\mathfrak{B}}(x)Y_{\mathfrak{B}}(y)=G_{\overline{\mathfrak{B}}}(x,y)$
for all $\ensuremath{x,y\in\overline{\mathfrak{B}}}$, where $G_{\overline{\mathfrak{B}}}(x,y)$
is the Green's function of simple random walk on $\overline{\mathfrak{B}}$.
(The constant $3$ in the definition of the blow-up is irrelevant
to the result—the point is that Dirichlet boundary conditions are
imposed on a box which is a constant fraction larger.)

Fix an inverse-temperature parameter $\gamma>0$. Let $\mathfrak{B}_{S}=[0,S)^{2}\cap\mathbf{Z}^{2}$.
We define the \emph{Liouville first-passage percolation} metric $\dist_{S}$
on $\mathfrak{B}_{S}$ by
\[
\dist_{S}(x_{1},x_{2})=\min_{\pi}\sum_{x\in\pi}e^{\gamma Y_{\mathfrak{B}_{S}}(x)},
\]
where $\pi$ ranges over all paths in $\mathfrak{B}_{S}$ connecting
$x_{1}$ and $x_{2}$. Given a sequence of normalizing constants $\kappa_{S}$,
we define a metric $\widetilde{\dist}_{S}$ on $[0,1]^{2}\subset\mathbf{R}^{2}$
by letting
\[
\widetilde{\dist}_{S}(x_{1},x_{2})=\frac{1}{\kappa_{S}}\dist_{S}(Sx_{1},Sx_{2})
\]
for each $x_{1},x_{2}\in[0,1]^{2}\cap\frac{1}{S}\mathbf{Z}^{2}$ and
extending to all $x_{1},x_{2}\in[0,1]^{2}$ by linear interpolation.
We will prove the following.
\begin{thm}
\label{thm:subseqconv}There is a $\gamma_{0}>0$ so that if $\gamma<\gamma_{0}$
then there exists a sequence of normalizing constants $\kappa_{S}$
so that, for every sequence of scales $S_{i}$, there is a subsequence
$\{S_{i_{j}}\}$ so that $\widetilde{\dist}_{S_{i_{j}}}$ converges
in distribution (using the Gromov–Hausdorff topology on the space
of metrics) to a limiting metric, which moreover is homeomorphic to
the Euclidean metric by a Hölder-continuous homeomorphism with Hölder-continuous
inverse.
\end{thm}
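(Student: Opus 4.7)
The plan is to deduce both Gromov\textendash Hausdorff tightness and the bi-H\"older property from two uniform (in $S$) H\"older-type estimates for $\tilde d_S$. Specifically, I would aim to prove that for suitable exponents $\alpha\in(0,1]$ and $\beta\in[1,\infty)$, for every $\epsilon>0$ there exist $C,c>0$ (independent of $S$) such that with probability at least $1-\epsilon$,
\[
c\,|x-y|^{\beta}\ \le\ \tilde d_{S}(x,y)\ \le\ C\,|x-y|^{\alpha}\qquad\text{for all }x,y\in[0,1]^{2}.
\]
Given these bounds, the random functions $\tilde d_{S}\colon[0,1]^{2}\times[0,1]^{2}\to\mathbf{R}_{\ge0}$ form an equicontinuous and uniformly bounded family on a compact set, so they are tight in $C([0,1]^{2}\times[0,1]^{2})$ by Arzel\`a\textendash Ascoli, and Prokhorov yields a subsequence converging in distribution to a continuous limit. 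The limit inherits symmetry, the triangle inequality, positivity off the diagonal, and the two\textendash sided bounds, so it is a random metric bi-H\"older equivalent to the Euclidean metric. Since the underlying set is fixed, uniform convergence of the metrics implies Gromov\textendash Hausdorff convergence of the induced metric spaces.

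\textbf{Choice of $\kappa_{S}$ and upper bound.} A natural choice is to take $\kappa_{S}$ to be the median of the left\textendash right crossing distance of $\mathfrak{B}_{S}$, so that $\tilde d_{S}$ has order\textendash one crossing distance by construction. The upper bound is the easier direction and should follow from a multi\textendash scale / chaining argument: using the Markov decomposition of the GFF on dyadic sub\textendash boxes of side $r$ into an (approximately) piecewise\textendash harmonic ``coarse field'' $h_{r}$ plus an independent local field with the distribution of $Y_{r}$, one obtains that the normalized crossing distance of a sub\textendash box is bounded by $(\kappa_{r}/\kappa_{S})\,e^{\gamma\|h_{r}\|_{\infty}}$ times a copy of the left\textendash right crossing distance at scale $r$. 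Concatenating along a dyadic staircase between $x$ and $y$ and using the Gaussian tails of $h_{r}$ (which are of order $\sqrt{\log(S/r)}$), together with the a priori scaling $\kappa_{r}/\kappa_{S}\le (r/S)^{\xi(\gamma)}$ for some $\xi(\gamma)>0$ (verified at small $\gamma$), one obtains polynomial control on $\mathbf{E}\tilde d_{S}(x,y)^{p}$ for appropriate $p$; a Kolmogorov\textendash Chentsov argument then upgrades this to the joint upper bound above.

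\textbf{Lower bound and main obstacle.} The lower bound is the central difficulty, because it requires controlling the infimum over \emph{all} paths, not merely the cost of a single path. The approach is to observe that any path from $x$ to $y$ with $|x-y|\approx r/S$ must cross every dyadic annulus of inner radius $2^{-k}r$ centered at one of the endpoints, for $k=0,1,\dots,\log_{2}r$, and each such crossing contributes a cost bounded below by the normalized crossing distance at scale $2^{-k}r$. Using approximate independence of the GFF across well\textendash separated annuli (obtained by subtracting a harmonic coarse field) and sharp lower\textendash tail estimates for box\textendash crossing distances---derived, in the small\textendash $\gamma$ regime, via Kahane\textendash type comparisons of the GFF with a branching random walk and moment bounds for Gaussian multiplicative chaos---one hopes to produce a polynomial bound $\mathbf{P}(\tilde d_{S}(x,y)<c|x-y|^{\beta})\le C|x-y|^{q}$ with $q$ as large as desired by combining many scales. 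A union bound over the $O(S^{4})$ pairs of lattice points then gives the desired uniform lower bound. The principal obstacle is the sharp lower\textendash tail estimate for a single crossing together with its stable propagation across scales: one must rule out rare low\textendash field configurations that could open anomalously short shortcuts, and it is exactly this step that forces the restriction to small $\gamma$, where the metric is near\textendash Euclidean and such shortcuts can be controlled quantitatively.
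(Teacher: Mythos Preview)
Your overall architecture is the same as the paper's: prove uniform-in-$S$ two-sided H\"older bounds $c|x-y|^{\beta}\le \tilde d_S(x,y)\le C|x-y|^{\alpha}$ with high probability, deduce tightness via Arzel\`a--Ascoli/Prokhorov, and read off the bi-H\"older homeomorphism for any limit. Your chaining scheme for the upper bound and your annular-crossing scheme for the lower bound are also the right skeletons (the paper does essentially this in Sections~6.3 and~8). But two load-bearing steps are missing from your proposal, and the substitutes you suggest do not do the job.

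\textbf{The RSW step.} Your chaining uses hard crossings of $2{\times}1$ rectangles (otherwise adjacent crossings need not intersect), while your normalization $\kappa_S$ is a median of a square crossing. To compare these you need $\Theta_{\mathrm{hard}}(1{\times}2)\le C\,\Theta_{\mathrm{easy}}(1{\times}2)$ uniformly in the scale. This is not automatic and is not close to Euclidean even for small $\gamma$: the coarse-field fluctuation on a scale-$r$ box is of order $\sqrt{\log(S/r)}$, so naive comparisons blow up with the scale ratio. The paper spends an entire section (Section~5) proving an RSW theorem in the FPP setting, adapting Tassion's argument, and this is what glues the upper and lower bounds together. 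Without it, neither your ``a priori scaling $\kappa_r/\kappa_S\le (r/S)^{\xi}$'' nor the chaining sum can be closed. You should not expect to shortcut this.

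\textbf{The lower-tail input and the CV bound.} Your proposed source for the single-scale lower tail---``Kahane-type comparison with BRW and GMC moment bounds''---does not apply to FPP weights: Kahane's inequality controls $\mathbf{E}\,F(\int e^{X}d\mu)$, not $\min_\pi \sum_{x\in\pi} e^{\gamma Y(x)}$, and GMC moments say nothing about geodesic weights. The paper instead runs a percolation-type argument (the ``passes'' of Section~4): any crossing must traverse $\Omega(N)$ disjoint $S$-scale sub-rectangles, there are at most $d^{2N}$ ways to choose them, and a Chernoff bound on i.i.d.\ quantile indicators gives $\mathbf{P}(\Psi_{\mathrm{LR}}\le cN\,\Theta_{\mathrm{easy}}[p])\le C(d^{2}\sqrt p)^{N}$. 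This yields both the lower tail you need and, by iteration, the polynomial growth $\Theta_{\mathrm{easy}}(r)\le C a^{s-t}\Theta_{\mathrm{easy}}(S)$. Crucially, the RSW theorem and the whole machine only relate \emph{different} quantiles of crossings at different scales; to identify a small quantile with the median (your $\kappa_S$) one needs a scale-uniform bound $\mathrm{CV}^2(\Psi_{\mathrm{LR}})<\delta$, which the paper proves by an Efron--Stein induction (Section~7) that itself consumes the RSW and pass estimates. Your outline has no analogue of this concentration step, and without it the normalization by a median cannot be connected to the small-quantile inputs that drive both the chaining and the lower bound.
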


\begin{rem}
The $\gamma_{0}$ that we are able to establish is so small that calculating
a precise value would be unilluminating. Extending our result to a
``reasonable'' value of $\gamma_{0}$ is an interesting open problem. 
\end{rem}

\subsection{Background and related results}

Substantial effort to date (see~\cite{ADH15,GK12} and their references)
has been devoted to understanding classical first-passage percolation,
with independent and identically distributed edge or vertex weights.
We argue that FPP with strongly-correlated weights is also a rich
and interesting subject, involving questions both analogous to and
distinctive from those asked in the classical case. In particular,
since the Gaussian free field is in some sense the canonical strongly-correlated
random medium, we endeavor to study Liouville FPP—that is, FPP in\textbf{
$\mathbf{Z}^{2}$} with weights given by the exponential of DGFF.

More specifically, Liouville FPP is thought to play a key role in
understanding the random metric associated with the Liouville quantum
gravity (LQG)~\cite{P81,DS11,RV14}. It is a major open problem just
to give a rigorous definition of such a metric. Miller and Sheffield
have recently succeeded in giving such a definition for the case $\gamma=\sqrt{8/3}$;
see~\cite{MS15,MS14,MS15b,MS16a,MS16b} and their references. In
these papers, the authors focused on directly constructing the random
metric in the continuum setup. Other recent work has shown the existence
of scaling exponents for an attempt to construct LQG for $\gamma\in(0,2)$
via ``LQG structure graphs''~\cite{GHS16}.

We take an alternative approach which seeks to understand the random
metric of LQG via scaling limits of lattice approximations using the
DGFF, as proposed (and discussed in more detail) in~\cite{Ben10}.
We choose to work with the square lattice-based Liouville FPP both
for its simple formulation and for its relationship to classical FPP\hspace*{0in}.
Eventually one might wish to tweak the definition of the discrete
metric in order to obtain a scaling limit with more invariance properties.
However, the methods developed in this article are robust to reasonable
changes in the method of discretization. We make this precise by stating
the necessary conditions on the field in \subsecref{field-properties}.

Our result is similar in flavor to~\cite{LeGall07} and~\cite{LP08},
which proved, respectively, that the graph distance of random quadrangulations
has a subsequential scaling limit and that the all possible limiting
metrics are homeomorphic to a 2-sphere. (In our case, however, the
homeomorphism property is a byproduct of the compactness result.)
The uniqueness of the scaling limit, known as the Brownian map, was
proved in later works~\cite{LeGall10,LeGall13,Miermont13}.

A crucial ingredient in~\cite{LeGall07} is a bijection~\cite{CV81,Schaeffer98,BDG04}
between uniform quadrangulations and labeled trees. In particular,
such a bijection allows an explicit evaluation of the order of the
typical distance in the random quadrangulation. By contrast, in our
model, determining the FPP distance exponent seems to be a major challenge.
Indeed, recent works~\cite{DG15,DG16} have shown that the distance
exponent for Liouville FPP is strictly less than $1$ at high temperatures,
and also~\cite{DZ15} that there exists a family of log-correlated
Gaussian fields for which the weight exponent can be arbitrarily close
to $1$. This means that the distance exponent is not universal among
log-correlated Gaussian fields, so precisely computing this exponent
must involve rather subtle properties of the field. Our proof circumvents
this difficulty since it works without knowing the scaling exponent.

\subsection{Proof approach and the RSW method}

The framework of our proof (which we note bears little similarity
to the methods used in~\cite{DG15,DG16}) is a multiscale analysis
procedure relying on several relationships which we establish between
FPP distances at different scales. The key estimates are inductive
upper and lower bounds on crossing distances and geodesic lengths,
in which distances and lengths at a larger scale are estimated in
terms of distances at a smaller scale. Most of the lower bounds on
the larger-scale distances are achieved in~\secref{lowerbounds}
using percolation-type arguments, while the upper bounds on larger-scale
distances and lengths are carried out in~\secref{upperbounds} using
gluing arguments along with the lower bounds. In~\subsecref{diameter},
we use a chaining argument to get an upper bound on box \emph{diameter},
which combined with the lower bounds allows us to inductively bound
the crossing distance coefficient of variation in~\secref{variation}.
Finally, in \secref{limits}, we apply this coefficient of variation
bound to establish tightness, and thus subsequential convergence,
of the normalized FPP metrics. 

Carrying out the above strategy leads to a central problem: lower
bounds on crossing distances are obtained in terms of ``easy crossings''
(between the two longer sides) of rectangles, while upper bounds are
obtained in terms of ``hard crossings'' (between the two shorter
sides). (See \figref{easyhard}\begin{wrapfigure}{O}{1.2in}%
\begin{centering}
\subfloat[Easy crossing]{\begin{centering}
\tiny
\begin{tikzpicture}[x=0.3in,y=0.3in]
\draw [thick] (0,0) -- (3,0) -- (3,1) -- (0,1) -- cycle;
\draw [red,style={decorate,decoration={snake,amplitude=0.4}}] (1.4,0) -- (1.8,1);
\end{tikzpicture}
\par\end{centering}
}
\par\end{centering}
\begin{centering}
\subfloat[Hard crossing]{\begin{centering}
\tiny
\begin{tikzpicture}[x=0.3in,y=0.3in]
\draw [thick] (0,0) -- (3,0) -- (3,1) -- (0,1) -- cycle;

\draw [red,style={decorate,decoration={snake,amplitude=0.4}}] (0,0.4) -- (3,0.5);

\end{tikzpicture}
\par\end{centering}
}
\par\end{centering}
\caption{\label{fig:easyhard}}
\end{wrapfigure}%
) In order to play these bounds off of each other, we must establish
a relationship between easy and hard crossing distances. Results of
this type are known as RSW statements, and the key ingredient in our
results (\secref{rsw}, representing the bulk of the paper) is an
RSW theorem for the Liouville FPP setting.

We briefly review the history of the RSW method, an important technique
in planar statistical physics, which was initiated in~\cite{Russo78,SW78,Russo81}
in order to prove a positive hard crossing probability through a rectangle
in critical Bernoulli percolation. Recently, an RSW theory has been
developed for FK percolation; see e.g.~\cite{DHN11,BDC12,DST15}.
Most relevant to the present paper, an RSW theory was developed in~\cite{tassion}
for Voronoi percolation. In fact, the beautiful method in~\cite{tassion}
is widely applicable to percolation problems satisfying the FKG inequality,
mild symmetry assumptions, and weak correlation between well-separated
regions. For example, in~\cite{DRT16}, this method was used to give
a simpler proof of the result of~\cite{BDC12}, and in~\cite{DMT},
the authors proved an RSW theorem for the crossing probability of
level sets of planar Gaussian free field. The Liouville FPP model
has analogous symmetry and correlation properties, indicating that
the methods in~\cite{tassion} can apply in this setting as well.
Indeed, the geometric framework of our RSW proof is \emph{hugely}
inspired by~\cite{tassion}. Shortly after we posted this article,
in \cite{ATT17} the authors developed a method of comparing easy
and hard crossing probabilities in the study of the Poisson Boolean
percolation model, although their method does not seem to apply to
the geodesic of our FPP metric.

A main novelty of our result is that it seems to be the first RSW
theorem for random planar metrics (rather than for traditional crossing
probabilities for percolation problems). The use of RSW theory in
the metric setting has the potential to enrich both the application
and the theory of the RSW method, and we expect more applications
of RSW theory in the study of random planar metrics. One encounters
substantial challenges working with the FPP weights in our RSW result
even given the beautiful work of~\cite{tassion}: the proof method
of~\cite{tassion} is based on an intricate induction which becomes
even more delicate with the FPP weights taken into account. Besides
that, our FPP metric lacks a natural self-duality, which precludes
using the hypothesis of crossing square boxes as in the traditional
setup; rather, we start with ``easy'' crossings of rectangular boxes.
The difficulties are such that we are only able to relate \emph{different}
quantiles of the FPP distance in different scales, and we have to
apply our induction hypothesis on the variance of the FPP distance
to relate different quantiles at each scale. This introduces an additional
layer of complexity to our arguments.

\subsection{Acknowledgments}

We thank Steve Lalley for encouragement and useful discussions, and
an anonymous referee for a great number of helpful comments.

\section{Preliminaries\label{sec:preliminaries}}

\subsection{Notational conventions\label{subsec:notation}}

Here we introduce notation that we will use throughout the paper.

\subsubsection{Boxes}

Since we will be primarily working in the discrete setting, throughout
the paper, the notation $[a,b)$ will denote the set of integers between
$a$ and $b-1$, inclusive, and $[a,b]$ the set of integers between
$a$ and $b$, inclusive. When we need to refer to an interval of
real numbers, we will attach a subscript $\mathbf{R}$, as in $[a,b]_{\mathbf{R}}$,
etc. A \emph{box} or \emph{rectangle }(we use the terms interchangeably)
is a finite rectangular subset of $\mathbf{Z}^{2}$. We will denote
by $\mathscr{B}$ the set of all boxes in $\mathbf{Z}^{d}$. We will
say that a square box is \emph{dyadic }if its side-length is a power
of $2$ and the coordinates of its bottom-left corner are multiples
of its side-length. As in the introduction, the \emph{blow-up} of
a box $\mathfrak{B}$, denoted $\overline{\mathfrak{B}}$, is the
union of the nine translates of $\mathfrak{B}$ centered around $\mathfrak{B}$.
We say that a rectangular box is \emph{portrait} if its height is
greater than its width and \emph{landscape} if its width is greater
than its height. For boxes $\mathfrak{A}\subseteq\mathfrak{B}$, we
will use the notation $|\mathfrak{B}/\mathfrak{A}|$ to denote the
maximum of the width of $\mathfrak{B}$ divided by the width of $\mathfrak{A}$
and the height of $\mathfrak{B}$ divided by the height of $\mathfrak{A}$.

\subsubsection{Paths\label{subsec:pathnotation}}

Suppose $\pi$ is a path and $Y$ is a random field. Define 
\[
\psi(\pi;Y)=\sum_{x\in\pi}\exp(\gamma Y(x)).
\]
If $\mathfrak{R}$ is a rectangle, let
\[
\Psi_{\mathrm{LR}}(\mathfrak{R};Y)=\min_{\pi}\psi(\pi;Y),
\]
where $\pi$ ranges over all left–right crossings of $\mathfrak{R}$.
Define $\Psi_{\mathrm{BT}}$ analogously for bottom-top crossings.
Also put
\[
\Psi_{\text{easy}}(\mathfrak{R};Y)=\min_{\pi}\psi(\pi;Y)
\]
where $\pi$ ranges over all crossings between the longer sides of
$\mathfrak{R}$, and let
\[
\Psi_{\text{hard}}(\mathfrak{R};Y)=\min_{\pi}\psi(\pi;Y)
\]
where $\pi$ ranges over all crossings between the shorter sides of
$\mathfrak{R}$. (Hence $\pi_{\text{easy}}(\mathfrak{R};Y)=\pi_{\mathrm{LR}}(\mathfrak{R};Y)$
if $\mathfrak{R}$ is portrait, etc.) If a path $\pi$ crosses a box
$\mathfrak{R}$ in the easy (hard) direction, we say that $\pi$ is
an \emph{easy-crossing }(\emph{hard-crossing})\emph{ }of $\mathfrak{R}$,
and we say that $\pi$ \emph{easy-crosses }(\emph{hard-crosses}) $\mathfrak{R}$.
Define for all $x,y\in\mathfrak{R}$
\[
\Psi_{x,y}(\mathfrak{R};Y)=\min_{\pi}\psi(\pi;Y)
\]
where the minimum is taken over all paths $\pi$ connecting $x$ and
$y$ while remaining inside $\mathfrak{R}$. Finally, put
\begin{align*}
\Psi_{\partial}(\mathfrak{R};Y) & =\max_{x,y\in\partial\mathfrak{B}}\Psi_{x,y}(\mathfrak{R};Y),\text{ and} & \Psi_{\max}(\mathfrak{R};Y) & =\max_{x,y\in\mathfrak{B}}\Psi_{x,y}(\mathfrak{R};Y).
\end{align*}

We have now several times defined symbols of the form $\Psi_{\bullet}(\mathfrak{R};Y)$
as the minimum of $\psi(\cdot;Y)$ over some collection of paths.
In each case let $\pi_{\bullet}(\mathfrak{R};Y)$ be the path that
achieves the minimum; if there are multiple such paths (which will
almost surely not happen if the random variables defining the field
have a sufficiently continuous distribution), let one be chosen uniformly
at random, independently of everything else. We also need notation
for the quantile functions for these variables, so let
\[
\Theta_{\bullet}(\mathfrak{R};Y)[p]=\inf\{w\mid\mathbf{P}[\Psi_{\bullet}(\mathfrak{R};Y)\le w]\ge p\}.
\]

For a path $\pi$, let $|\pi|$ denote the length of $\pi$ (that
is, the number of vertices in $\pi$). For $S$ a power of $2$ (less
than the side-length of $\mathfrak{R}$), let $\|\pi\|_{S}$ denote
the number of dyadic square boxes of side-length $S$ entered by $\pi$,
counting each box \emph{once}, even if $\pi$ enters it multiple times.
Let $M_{\bullet;S}(\mathfrak{R};Y)=\|\pi_{\bullet}(\mathfrak{R};Y)\|_{S}$.%

Whenever the field is omitted in the $\Psi$ or $\Theta$ notation,
it will be assumed to be the Gaussian free field on the box in question,
defined as in the introduction as the discrete Gaussian free field
with Dirichlet boundary conditions on the boundary of the blow-up
of the box.

\subsubsection{Asymptotics}

Big-$O$, little-$o$, big-$\Omega$, and little-$\omega$ notation
will be employed, \emph{always} with the limit taken as $\gamma\to0$.
(We recall that we write $f(x)=\Omega(g(x))$ if $g(x)=O(f(x))$ and
$f(x)=\omega(g(x))$ if $g(x)=o(f(x))$.) Subscripts will be employed
to indicate that the limit holds for any \emph{fixed} value of the
variable(s) in the subscript, and uniformly in all other variables.
(For example, we could write $\sin(2^{K}\gamma)=o_{K}(1)$.) Most
importantly, the limit is \emph{always} \emph{uniform} over all scales.
We will also work with many constants throughout the proofs. The important
point regarding any constant is that it is independent of the scale.
Constants that will be referenced in later sections will be denoted
by a mnemonic subscript.

\subsection{Properties of the field\label{subsec:field-properties}}

While we have stated our results for first-passage percolation on
the discrete Gaussian free field, we do not require any particularly
fine properties of this field. In this section we collect the necessary
facts about the DGFF, and summarize them in Criteria~\ref{cri:centeredpos}–\ref{cri:localrandomness}.
However, before we can do this we first must precisely define the
DGFF, in particular the relationship between the DGFF defined on different
boxes.

\subsubsection{Coupling of fields in different boxes}

Although we defined the discrete Gaussian free field on a box in the
introduction, in order to perform the multi-scale analysis we use
in this article it will be convenient to couple the fields on all
different finite boxes in $\mathbf{Z}^{2}$ simultaneously. We recall
the \emph{Markov field property }of the Gaussian free field: that
if $Y$ is a Gaussian free field with Dirichlet boundary conditions
on $\mathfrak{B}$, then $Y_{\mathfrak{B}}-\mathbf{E}\left[Y_{\mathfrak{B}}\,\middle|\,\left(Y_{\mathfrak{B}}\restriction\partial\overline{\mathfrak{A}}\right)\right]$
defines a discrete Gaussian free field with Dirichlet boundary conditions
on $\overline{\mathfrak{A}}$, which moreover is independent of $Y_{\mathfrak{B}}\restriction(\overline{\mathfrak{B}}\setminus\overline{\mathfrak{A}})$.
Here, $\restriction$ denotes restriction of the field.

Let $\mathfrak{B}_{N}=[-N,N]^{2}$. Let $Y_{\mathfrak{B}_{N}}^{(N)}$
be a discrete Gaussian free field with Dirichlet boundary conditions
on $\overline{\mathfrak{B}_{N}}$. Now for all boxes $\mathfrak{B}\subset\mathfrak{B}_{N}$,
for each $x\in\mathfrak{B}$ define $Y_{\mathfrak{B}}^{(N)}(x)=Y_{\mathfrak{B}_{N}}^{(N)}(x)-\mathbf{E}\left[Y_{\mathfrak{B}_{N}}^{(N)}(x)\,\middle|\,\left(Y_{\mathfrak{B}_{N}}^{(N)}\restriction\partial\overline{\mathfrak{B}}\right)\right]$.
Now we note that whenever $N'\ge N$, the process $\{Y_{\mathfrak{B}}^{(N)}\mid\mathfrak{B}\subset\mathfrak{B}_{N}\}$
agrees in law with the process $\{Y_{\mathfrak{B}}^{(N')}\mid\mathfrak{B}\subset\mathfrak{B}_{N}\}$.
Indeed, if we put $Y_{\mathfrak{B}_{N}}^{(N)}$ and $Y_{\mathfrak{B}_{N'}}^{(N')}$
on the same probability space so that $Y_{\mathfrak{B}_{N}}^{(N)}=Y_{\mathfrak{B}_{N}}^{(N')}$,
then for all $x\in\mathfrak{B}$ we have
\begin{align*}
Y_{\mathfrak{B}}^{(N)}(x) & =Y_{\mathfrak{B}_{N}}^{(N)}(x)-\mathbf{E}\left[Y_{\mathfrak{B}_{N}}^{(N)}(x)\,\middle|\,\left(Y_{\mathfrak{B}_{N}}^{(N)}\restriction\partial\overline{\mathfrak{B}}\right)\right]\\
 & =Y_{\mathfrak{B}_{N}}^{(N')}(x)-\mathbf{E}\left[Y_{\mathfrak{B}_{N}}^{(N')}(x)\,\middle|\,\left(Y_{\mathfrak{B}_{N}}^{(N')}\restriction\partial\overline{\mathfrak{B}}\right)\right]\\
 & =Y_{\mathfrak{B}_{N'}}^{(N')}(x)-\mathbf{E}\left[Y_{\mathfrak{B}_{N'}}^{(N')}(x)\,\middle|\,\left(Y_{\mathfrak{B}_{N'}}^{(N')}\restriction\partial\overline{\mathfrak{B}_{N}}\right)\right]\\
 & \quad-\mathbf{E}\left[Y_{\mathfrak{B}_{N'}}^{(N')}(x)-\mathbf{E}\left[Y_{\mathfrak{B}_{N'}}^{(N')}(x)\,\middle|\,\left(Y_{\mathfrak{B}_{N'}}^{(N')}\restriction\partial\overline{\mathfrak{B}_{N}}\right)\right]\,\middle|\,\left(Y_{\mathfrak{B}_{N}}^{(N')}\restriction\partial\overline{\mathfrak{B}}\right)\right]\\
 & =Y_{\mathfrak{B}_{N'}}^{(N')}(x)-\mathbf{E}\left[Y_{\mathfrak{B}_{N'}}^{(N')}(x)\,\middle|\,\left(Y_{\mathfrak{B}_{N}}^{(N')}\restriction\partial\overline{\mathfrak{B}}\right)\right]\\
 & =Y_{\mathfrak{B}}^{(N')}(x),
\end{align*}
where the second-to-last equality is by the tower property of conditional
expectation and the independence statement in the Markov field property.
Thus, since all of the processes are Gaussian, using Kolmogorov's
extension theorem we can, on a single probability space, simultaneously
define $Y_{\mathfrak{B}}$ for every $\mathfrak{B}\in\mathscr{B}$
in such a way that whenever $\mathfrak{A}\subset\mathfrak{B}$, we
have, for all $x\in\mathfrak{A}$,
\begin{equation}
Y_{\mathfrak{A}}(x)=Y_{\mathfrak{B}}(x)-\mathbf{E}\left[Y_{\mathfrak{B}}(x)\,\middle|\,\left(Y_{\mathfrak{B}}\restriction\partial\overline{\mathfrak{A}}\right)\right].\label{eq:dgffcoupling}
\end{equation}
Henceforth, we will assume that the DGFFs on different boxes have
been coupled in this way, so that in particular \eqref{dgffcoupling}
holds.

\subsubsection{Description of the criteria for the field}

Throughout the paper, we will consider a collections of real-valued
random variables (the ``field''), denoted $\{Y_{\mathfrak{B}}(x)\,:\,\mathfrak{B}\in\mathscr{B},x\in\mathfrak{B}\}$,
and we will always assume the following five properties.
\begin{criterion}
\label{cri:centeredpos}The field $\{Y_{\mathfrak{B}}(x)\mid\mathfrak{B}\in\mathscr{B},x\in\mathfrak{B}\}$
is a centered Gaussian process which moreover is non-negatively correlated:
for all $\mathfrak{B}_{1},\mathfrak{B}_{2}\in\mathscr{B}$, $x_{1}\in\mathfrak{B}_{1}$,
$x_{2}\in\mathfrak{B}_{2}$, we have $\Cov(Y_{\mathfrak{B}_{1}}(x_{1}),Y_{\mathfrak{B}_{2}}(x_{2}))\ge0$.
\end{criterion}

\begin{criterion}
\label{cri:invariance}If $\theta$ is a Euclidean isometry of $\mathbf{R}^{2}$
which preserves $\mathbf{Z}^{2}$, then the indexed families of random
variables $\{Y_{\mathfrak{B}}(x)\mid\mathfrak{B}\in\mathscr{B},x\in\mathfrak{B}\}$
and $\{Y_{\theta(\mathfrak{B})}(\theta(x))\mid\mathfrak{B}\in\mathscr{B},x\in\mathfrak{B}\}$
agree in distribution.
\end{criterion}

\begin{criterion}
\label{cri:disjoint-independent}If $\overline{\mathfrak{B}_{1}}$
and $\overline{\mathfrak{B}_{2}}$ are disjoint, then $Y_{\mathfrak{B}_{1}}$
and $Y_{\mathfrak{B}_{2}}$ are independent.
\end{criterion}

\begin{criterion}
\label{cri:coursebound}There are constants $C,C_{\mathrm{F}}>0$
so that if $\mathfrak{A}\subset\mathfrak{B}$ are nested rectangles,
then we have, for all $u\ge0$,
\[
\mathbf{P}\left(\max_{x\in\mathfrak{A}}\left|Y_{\mathfrak{A}}(x)-Y_{\mathfrak{B}}(x)\right|\ge C_{\mathrm{F}}+u\right)\le\exp\left(-\frac{Cu^{2}}{\log|\mathfrak{B}/\mathfrak{A}|}\right).
\]
\end{criterion}

\begin{criterion}
\label{cri:localrandomness}There is an absolute constant $C$ so
that the following holds. For each rectangle $\mathfrak{B}$ with
a partition of its blow-up $\overline{\mathfrak{B}}=\bigsqcup\limits _{i=1}^{r}\mathfrak{B}_{i}$
into squares $\mathfrak{B}_{i}$ of uniform side-length $S$, there
is a stochastic process $\{Z_{i}\}_{i=1}^{r}$ so that $Z_{1},\ldots,Z_{r}$
are independent, $Y_{\mathfrak{B}}(x)\in\sigma(Z_{1},\ldots,Z_{r})$
for all $x\in\mathfrak{B}$, and whenever $1\le j\le r$ and $x\in\mathfrak{B}\setminus\overline{\mathfrak{B}_{j}}$,
we have 
\begin{align}
\Var(Y_{\mathfrak{B}}(x)\mid Z_{1},\ldots,\widehat{Z_{j}},\ldots,Z_{r}) & \le C\label{eq:resamplevar}\\
\Var(Y_{\mathfrak{B}}(x)-Y_{\mathfrak{B}}(y)\mid Z_{1},\ldots,\widehat{Z_{j}},\ldots,Z_{r}) & \le C\|x-y\|^{2}S^{2}/N^{4}\label{eq:resamplediffvar}
\end{align}
where the hat means that $Z_{j}$ is excluded and $N$ is the length
of the shorter side of $\mathfrak{B}$.
\end{criterion}

\begin{rem}
In the definition and use of the Markov field property above, we considered
$Y_{\mathfrak{B}}(x)$ for $x\in\overline{\mathfrak{B}}$ (i.e. not
in $\mathfrak{B}$ itself). This was important for defining the coupling
but in the sequel we will only consider the values of $Y_{\mathfrak{B}}$
on $\mathfrak{B}$ itself. 
\end{rem}

\begin{rem}
Although, in order to avoid the complexity of multiple cases, we will
not consider this case in detail, we invite the reader to check that
all of the arguments in the paper go through as well for \emph{continuous}
approximations of the GFF: that is, fields $\{Y_{\mathfrak{B}}(x)\,:\,\mathfrak{B}\in\mathfrak{B},x\in\mathfrak{B}\}$
satisfying Criteria~\ref{cri:centeredpos}–\ref{cri:localrandomness},
where the weight of a (continuous) path $\xi:[0,1]\to\mathfrak{B}$
is given by
\[
\int_{0}^{1}e^{\gamma Y_{\mathfrak{B}}(\xi(t))}|\xi'(t)|\,dt.
\]
In fact, certain technical parts of the argument (such as one case
in the proof of \lemref{setuplambda}, and the linear interpolation
given in \eqref{linear-interpolation} in the sequel) become unnecessary
in the continuous case.
\end{rem}

\subsubsection{Proof that the DGFF satisfies the criteria}

We now demonstrate that the DGFF indeed satisfies the criteria that
we have just laid out. (A much gentler introduction to these properties
is available in \cite{biskup}.) Coupled as above, the DGFF satisfies
Criteria~\ref{cri:centeredpos} and~\ref{cri:invariance}. To show
\criref{coursebound} for the DGFF, we first note that, by Fernique's
criterion (see~\cite{Fer75} and~\cite[Theorem 4.1]{A90} or \cite[Theorem 6.6]{biskup})
and a covariance estimate on the conditional expectation field, as
in \cite[Lemmas 3.5 and 3.10]{BDZ14}, we have a constant $C_{\mathrm{F}}$
so that
\[
\mathbf{E}\left[\max_{x\in\mathfrak{A}}\mathbf{E}\left[Y_{\mathfrak{B}}(x)\,\middle|\,Y_{\mathfrak{B}}\restriction\partial\overline{\mathfrak{A}}\right]\right]<C_{\mathrm{F}}.
\]
Moreover, the variance of $\mathbf{E}\left[Y_{\mathfrak{B}}(x)\,\middle|\,Y_{\mathfrak{B}}\restriction\partial\overline{\mathfrak{A}}\right]$
can be bounded (uniformly over $x\in\mathfrak{A}$) by a constant
times $\log|\mathfrak{B}/\mathfrak{A}|$. These two facts, along with
the Borell–TIS inequality (see, for example, \cite[Theorem 7.1]{L01},
\cite[Theorem 6.1]{biskup}, or \cite[Theorem 2.1]{A90}) imply that
\[
\mathbf{P}\left(\max_{x\in\mathfrak{A}}\mathbf{E}\left[Y_{\mathfrak{B}}(x)\,\middle|\,Y_{\mathfrak{B}}\restriction\partial\overline{\mathfrak{A}}\right]\ge C_{\mathrm{F}}+u\right)\le\exp\left(-\frac{Cu^{2}}{\log|\mathfrak{B}/\mathfrak{A}|}\right).
\]

Finally we will prove \criref{localrandomness} using the ``resistor''
definition of the DGFF (see for example \cite[p. 52]{LP:book}). For
each edge $e$ in the nearest-neighbor graph on $\mathfrak{B}$, let
$\xi(e)$ be a standard normal random variable, independent from $\xi(e')$
for each $e'\ne e$. Then, as in \cite[(2.25)]{LP:book} we have the
alternative definition of Gaussian free field on $\mathfrak{B}$ as
\[
Y_{\mathfrak{B}}(x)=\sum_{e}i_{x}(e)\xi(e),
\]
where $i_{x}(e)$ is the flow through $e$ of a unit electric current
from $x$ to $\partial\overline{\mathfrak{B}}$, where the lattice
is treated as an electrical network with unit resistance on each edge.
Let $Z_{i}=(i_{x}(e)\,:\,e\in\mathfrak{B}_{i})$. Now if $x\in\mathfrak{B}$,
we have 
\begin{equation}
\Var(Y_{\mathfrak{B}}(x)\mid Z_{1},\ldots,\widehat{Z_{j}},\ldots,Z_{r})=\sum_{e\in\mathfrak{B}_{j}}(i_{x}(e))^{2}.\label{eq:Xibound-proto}
\end{equation}
By~\cite[Proposition 2.2]{LP:book}, we have
\[
i_{x}(e)=\frac{G_{\mathfrak{\overline{B}}}(x,e_{+})}{\deg(e_{+})}-\frac{G_{\mathfrak{\overline{B}}}(x,e_{-})}{\deg(e_{-})},
\]
so
\[
|i_{x}(e)|=\frac{1}{4}|G_{\overline{\mathfrak{B}}}(x,e_{+})-G_{\mathfrak{\overline{B}}}(x,e_{-})|,
\]
where $e_{-}$ and $e_{+}$ denote the two endpoints of $e$ and $G_{\overline{\mathfrak{B}}}$
denotes the Green's function for simple random walk stopped on the
boundary of $\overline{\mathfrak{B}}$. But by~\cite[Proposition 4.6.2(b), Theorem 4.4.4]{lawlerlimic},
we have
\[
G_{\overline{\mathfrak{B}}}(x,y)=\mathbf{E}^{x}[a(Q_{\tau_{\overline{\mathfrak{B}}}},y)]-a(x,y),
\]
where $\{Q_{t}\}$ is a simple random walk, $\tau_{\overline{\mathfrak{B}}}$
is the hitting time of $\partial\overline{\mathfrak{B}}$, $\mathbf{E}^{x}$
is the expectation with respect to the law of $\{Q_{t}\}$ started
at $x$, and
\[
a(x,y)=\frac{2}{\pi}\log|x-y|+\frac{2C_{1}+\log8}{\pi}+O(|x-y|^{-2}),
\]
where $C_{1}\approx0.577$ is the Euler–Mascheroni constant (usually
denoted $\gamma$) and the big-O notation is taken as $x-y\to\infty$.
An easy computation implies that, if $x\in\mathfrak{B}\setminus\overline{\mathfrak{B}_{j}}$,
then $|i_{x}(e)|\le C_{2}/S$ for some constant $C_{2}$, where $S$
is the side-length of $\mathfrak{B}_{j}$. Combining this with \eqref{Xibound-proto}
shows that $\Var(Y_{\mathfrak{B}}(x)\mid Z_{1},\ldots,\widehat{Z_{j}},\ldots,Z_{r})$
is bounded by a constant for all $x\in\mathfrak{B}\setminus\overline{\mathfrak{B}_{j}}$.
This completes the proof of \criref{localrandomness} for DGFF.

Finally, calculations similar to those in the proofs of Corollary~4.4.5
and Lemma~6.3.3 in~\cite{lawlerlimic} show that there is a constant
$C_{3}$ so that if $|x-y|=1$, then
\begin{align*}
|i_{x}(e)-i_{y}(e)| & =\frac{1}{4}|G_{\overline{\mathfrak{B}}}(x,e_{+})-G_{\mathfrak{\overline{B}}}(x,e_{-})-G_{\overline{\mathfrak{B}}}(y,e_{+})+G_{\mathfrak{\overline{B}}}(y,e_{-})|\le\frac{C_{3}}{N^{2}},
\end{align*}
where as above $N$ is the length of the shorter side of $\mathfrak{B}$.
By the triangle inequality, we then have that for any $x,y\in\mathfrak{B}$,
\[
|i_{x}(e)-i_{y}(e)|\le C_{3}\frac{\|x-y\|}{N^{2}}.
\]
Therefore, we have
\begin{align*}
\Var(Y_{\mathfrak{B}}(x)-Y_{\mathfrak{B}}(y)\mid Z_{1},\ldots,\widehat{Z_{j}},\ldots,Z_{r}) & =\sum_{e\in\mathfrak{B}_{j}}\left(i_{x}(e)-i_{y}(e)\right)^{2}\\
 & \le C_{3}^{2}\|x-y\|^{2}S^{2}/N^{4},
\end{align*}
establishing \eqref{resamplediffvar}.

\subsubsection{Further properties of the field}

We now record some important consequences of Criteria~\ref{cri:centeredpos}–\ref{cri:localrandomness}
that we will use throughout the paper. The first is a translation
of \criref{coursebound} into the exponentiated setting. Indeed, \criref{coursebound}
implies that
\begin{equation}
\max_{x\in\mathfrak{A}}\left|\frac{e^{\gamma Y_{\mathfrak{B}}(x)}}{e^{\gamma Y_{\mathfrak{A}}(x)}}\right|=1+o(1)\label{eq:relatescale}
\end{equation}
in probability (as $\gamma\to0$). More precisely, there is an absolute
constant $u_{0}>1$ so that if $u\ge u_{0}$ then we have
\begin{multline}
\mathbf{P}\left(\max_{x\in\mathfrak{A}}\left|\frac{e^{\gamma Y_{\mathfrak{B}}(x)}}{e^{\gamma Y_{\mathfrak{A}}(x)}}\right|\ge u\right)+\mathbf{P}\left(\max_{x\in\mathfrak{A}}\left|\frac{e^{\gamma Y_{\mathfrak{B}}(x)}}{e^{\gamma Y_{\mathfrak{A}}(x)}}\right|\le\frac{1}{u}\right)\\
=\mathbf{P}\left(\max_{x\in\mathfrak{A}}|\gamma Y_{\mathfrak{B}}(x)-\gamma Y_{\mathfrak{A}}(x)|\ge\log u\right)\\
\le\exp\left(-\omega(1)\cdot\frac{(\log u)^{2}}{\log|\mathfrak{B}/\mathfrak{A}|}\right).\label{eq:tailgff}
\end{multline}

Another key ingredient, implied by \criref{centeredpos}, is the celebrated
FKG inequality.
\begin{lem}[FKG inequality]
\label{lem:FKG}If $f$ and $g$ are increasing functions of $Y=\{Y_{\mathfrak{B}}(x)\mid\mathfrak{B}\in\mathscr{B},x\in\mathfrak{B}\}$,
then
\begin{equation}
\mathbf{E}f(Y)g(Y)\ge\mathbf{E}f(Y)\mathbf{E}g(Y).\label{eq:mainfkg}
\end{equation}
\end{lem}

See~\cite{pitt} for a proof of the FKG inequality for general Gaussian
processes with non-negative correlations, of which \eqref{mainfkg}
is an application. The following corollary is typical of our applications
of the FKG inequality.
\begin{cor}
\label{corr:stretch} Let $a>b$ and $S,k\in\mathbf{N}$, and put
$\mathfrak{A}=[0,aS)\times[0,bS)$ and $\mathfrak{B}=[0,(ka-(k-1)b)S)\times[0,bS)$.
Then 
\[
\mathbf{P}[\Psi_{\mathrm{LR}}(\mathfrak{B})\le2ky]\ge\mathbf{P}[\Psi_{\mathrm{LR}}(\mathfrak{A})\le y]^{2k-1}-o_{k}(1).
\]
\end{cor}

\begin{proof}
\begin{figure}[t]
\begin{centering}
\tiny
\begin{tikzpicture}[x=0.5in,y=0.5in]
\draw[step=1,thin] (0,-1) grid (6,1);
\draw[white] (0,-1) -- (0,0);
\draw[white] (0,-1) -- (1,-1);
\draw[white] (5,-1) -- (6,-1);
\draw[white] (6,-1) -- (6,0);
\draw[red,style={decorate,decoration={snake,amplitude=0.8}}] (0,0.7) -- (2,0.7);
\draw[red,style={decorate,decoration={snake,amplitude=0.8}}] (1,0.2) -- (3,0.2);
\draw[red,style={decorate,decoration={snake,amplitude=0.8}}] (2,0.5) -- (4,0.5);
\draw[red,style={decorate,decoration={snake,amplitude=0.8}}] (3,0.8) -- (5,0.8);
\draw[red,style={decorate,decoration={snake,amplitude=0.8}}] (4,0.4) -- (6,0.4);
\draw[red,style={decorate,decoration={snake,amplitude=0.8}}] (1.6,-1) -- (1.1,1);
\draw[red,style={decorate,decoration={snake,amplitude=0.8}}] (2.4,-1) -- (2.7,1);
\draw[red,style={decorate,decoration={snake,amplitude=0.8}}] (3.8,-1) -- (3.7,1);
\draw[red,style={decorate,decoration={snake,amplitude=0.8}}] (4.1,-1) -- (4.4,1);
\end{tikzpicture}
\par\end{centering}
\caption{\label{fig:hardglue}Gluing strategy in \corrref{stretch} for $a=2b$
and $k=5$.}
\end{figure}
This follows from \eqref{relatescale} and the \nameref{lem:FKG}
by a ``gluing argument,'' illustrated in \figref{hardglue}.
\end{proof}

\section{Inductive hypothesis}

The key ingredient for all of our results is an inductive bound on
the coefficient of variation for the FPP crossing distance of a rectangle.
For any random variable $X$, we define the notation
\[
\CV^{2}(X)=\frac{\Var X}{(\mathbf{E}X)^{2}}.
\]

\begin{thm}
\label{thm:maintheorem}Let $\delta>0$. If $\gamma$ is sufficiently
small compared to $\delta$, then for all boxes $\mathfrak{R}$ of
aspect ratio between $1/2$ and $2$ inclusive, we have $\CV^{2}(\Psi_{\mathrm{LR}}(\mathfrak{R}))<\delta$.
\end{thm}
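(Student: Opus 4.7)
My plan is to prove the theorem by induction on the scale $S$ of the rectangle $\mathfrak{R}$, proving simultaneously that $\CV^2(\Psi_{\mathrm{LR}}(\mathfrak{R})) < \delta$ at every scale for all rectangles of aspect ratio between $1/2$ and $2$, provided $\gamma$ is small enough in terms of $\delta$. A base case at some fixed small scale can be handled directly, since the GFF on a box of constant size has uniformly bounded exponential moments, so $\CV^2$ there is $O(\gamma^2)$. The inductive step is: assuming the bound at all scales up to $S$, derive it at scale (roughly) $2S$. The proof of this step will require three inputs developed in later sections and assumed here: (i) the RSW theorem of Section~\ref{sec:rsw}, which transfers a quantile lower bound on the ``easy'' crossing of one rectangle to a comparable lower bound on the ``hard'' crossing of another rectangle at the same scale (possibly with a degraded quantile and aspect ratio); (ii) the lower bounds on scale-$2S$ crossing weights in terms of scale-$S$ easy-crossing weights from Section~\ref{sec:lowerbounds}, obtained by noting that any $\mathrm{LR}$-crossing at scale $2S$ must pierce many disjoint scale-$S$ sub-rectangles; (iii) the upper bounds on scale-$2S$ crossing weights from Section~\ref{sec:upperbounds} and Subsection~\ref{subsec:diameter}, obtained by gluing scale-$S$ hard crossings end to end via FKG (as in Corollary~\ref{cor:stretch}) and by the chaining argument for the box diameter.

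The core of the inductive step is to decompose a minimum-weight $\mathrm{LR}$-crossing $\pi_{\mathrm{LR}}$ at scale $2S$ into pieces inside a constant number of scale-$S$ boxes, write $\Psi_{\mathrm{LR}}$ at scale $2S$ as (approximately) a sum
\[
\Psi_{\mathrm{LR}}(\mathfrak{R}_{2S}) \approx \sum_{i} e^{\gamma Z_{i}} \Psi_{\mathrm{hard}}(\mathfrak{R}^{(i)}_{S}; Y_{\mathfrak{R}^{(i)}_{S}}),
\]
where $Z_{i}$ is the coarse-field contribution on the $i$-th scale-$S$ sub-box (as defined by~\eqref{eq:subfield}), and the fields $Y_{\mathfrak{R}^{(i)}_{S}}$ are, after conditioning on the coarse field, nearly independent across well-separated $i$. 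The inductive hypothesis on $\CV^2$ at scale $S$, upgraded from $\Psi_{\mathrm{LR}}$ to $\Psi_{\mathrm{hard}}$ via RSW and Corollaries~\ref{cor:stretch}--\ref{cor:squish}, controls the conditional contribution of each fine piece. The coarse-field factors $e^{\gamma Z_i}$ contribute extra multiplicative variance of order $\gamma^2 \cdot \operatorname{Var}(Z_i) = O(\gamma^{2})$ per box by Lemma~\ref{lem:relatescale-proto} and the tail in Lemma~\ref{lem:relatescale-proto-tail}, which is negligible once $\gamma$ is chosen small relative to $\delta$. Combining these, $\CV^2$ at scale $2S$ is bounded by the inductive $\CV^2$ at scale $S$ times a factor $(1+o(1))$ coming from the coarse field, plus an additive $O(\gamma^2)$ term; choosing $\gamma$ small enough ensures the bound $\delta$ is preserved.

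The main obstacle is twofold. First, the RSW input of Section~\ref{sec:rsw} only relates \emph{different} quantiles of easy and hard crossings; to convert a quantile statement into a moment statement one needs a two-sided tail estimate on $\Psi_{\mathrm{LR}}$, which is precisely what the induction hypothesis on $\CV^2$ supplies via Chebyshev. This creates a circular structure that must be broken by carefully interleaving the RSW step at scale $S$ (using the $\CV^2$ bound at scale $S$) with the bound on $\CV^2$ at scale $2S$. Second, one must control the upper tail of $\Psi_{\mathrm{LR}}$ at scale $2S$ via the diameter bound from Subsection~\ref{subsec:diameter}; this requires showing that the chaining argument does not blow up the upper tail enough to destroy the variance bound, and in particular that the number of scale-$S$ sub-boxes visited by $\pi_{\mathrm{LR}}$ is typically $O(1)$ rather than unboundedly large. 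Getting all of the constants to line up so that the additive error from the coarse field, the multiplicative degradation from RSW, and the contribution from atypical long geodesics all remain below $\delta$ uniformly in the scale is the delicate bookkeeping this proof will require.
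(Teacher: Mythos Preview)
Your overall shape---base case at a fixed scale, inductive step to larger scales using RSW, lower bounds, and upper bounds---matches the paper. However, the core of your inductive step has a genuine gap, and the paper's argument is substantially different from what you sketch.

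First, the decomposition you propose,
\[
\Psi_{\mathrm{LR}}(\mathfrak{R}_{2S}) \approx \sum_{i} e^{\gamma Z_{i}}\,\Psi_{\mathrm{hard}}(\mathfrak{R}^{(i)}_{S}; Y_{\mathfrak{R}^{(i)}_{S}}),
\]
does not hold in any form strong enough to transfer a $\CV^{2}$ bound. The geodesic at scale $2S$ is a global minimizer; its restriction to a scale-$S$ sub-box is \emph{not} (even approximately) the minimal hard crossing of that sub-box, and you have no control on where it enters or exits. One-sided inequalities are available (an upper bound by a glued path of sub-box hard crossings, a lower bound by a sum of easy crossings of passes), but the ratio between these two bounds is a constant that does not tend to $1$, so they cannot be combined into a variance bound of the form you state.

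Second, even granting your recursion $\CV^{2}_{2S}\le(1+o(1))\CV^{2}_{S}+O(\gamma^{2})$, this does not preserve the bound $\delta$: the right-hand side exceeds $\delta$ whenever $\CV^{2}_{S}$ is close to $\delta$, so the induction fails immediately. For the scheme to close you would need a strict contraction factor in front of $\CV^{2}_{S}$, and nothing in your outline produces one.

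The paper avoids both issues by a different mechanism. The inductive step (Lemma~\ref{lem:inductivestep}) goes not from $S$ to $2S$ but from $S$ to $KS$ for a \emph{large} $K$, chosen depending on $\delta$. The variance of $\Psi_{\mathrm{LR}}(\mathfrak{R})$ at scale $KS$ is bounded via the Efron--Stein inequality (Theorem~\ref{thm:varbound-1}): one resamples the field in each $S\times S$ sub-box $\mathfrak{C}_i$ and shows (Lemma~\ref{lem:resample-gff}) that the resulting change in $\Psi_{\mathrm{LR}}$ is at most the boundary-to-boundary diameter $\Psi_{\partial}(\overline{\mathfrak{C}_i})$, times the indicator that the geodesic actually enters $\overline{\mathfrak{C}_i}$. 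Summing, the variance is controlled by the diameter tail (Proposition~\ref{prop:diamtail}) and the expected number of sub-boxes visited by the geodesic (Proposition~\ref{prop:crossinglengthexp}), giving $\Var\Psi_{\mathrm{LR}}\lesssim K\cdot(\text{scale-}S\text{ quantity})^{2}$. Since $\mathbf{E}\Psi_{\mathrm{LR}}\gtrsim K\cdot(\text{scale-}S\text{ quantity})$ by the lower bounds, one obtains $\CV^{2}\lesssim 1/K$, which is made $<\delta$ by taking $K$ large enough---independently of $\CV^{2}$ at scale $S$. The inductive hypothesis is used only to feed the RSW, diameter, and geodesic-length lemmas, not to propagate $\CV^{2}$ multiplicatively. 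This Efron--Stein/resampling step is the missing idea in your proposal.
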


The bulk of the paper will be devoted to proving \thmref{maintheorem}
by induction on the scale. Actually, we will have to use the slightly
stronger inductive hypothesis that the coefficient of variation is
below a fixed $\delta_{0}$. The following lemma, which is an easy
consequence of Chebyshev's inequality, will be key to our induction.
\begin{lem}
\label{lem:relatequantiles}Let $X$ and $Y$ be nonnegative random
variables. Define $F_{X}(x)=\mathbf{P}(X\le x)$ and $F_{Y}(y)=\mathbf{P}(Y\le y)$;
let $\Theta_{X}=F_{X}^{-1}$ and $\Theta_{Y}=F_{Y}^{-1}$ be the corresponding
quantile functions. If $\CV^{2}(X)<\delta<p<1$ and $\CV^{2}(Y)<\varepsilon<q<1$,
then there are constants $0<A\le B$, depending only on $\delta,\varepsilon,p,q$
(and not on the random variables $X,Y$) so that
\begin{equation}
A\cdot\frac{\Theta_{X}(p)}{\Theta_{Y}(q)}\le\frac{\mathbf{E}X}{\mathbf{E}Y}\le B\cdot\frac{\Theta_{X}(p)}{\Theta_{Y}(q)}.\label{eq:concentration-main}
\end{equation}
Suppose moreover that $\delta<p'$ and $\varepsilon<q'$. Then there
is a constant $B'>0$, depending only on $\delta,\varepsilon,p,q,p',q'$,
so that
\begin{equation}
\frac{\Theta_{X}(p')}{\Theta_{Y}(q')}\le B'\cdot\frac{\Theta_{X}(p)}{\Theta_{Y}(q)}.\label{eq:concentration-quantiles}
\end{equation}
\end{lem}

While the statement of \lemref{relatequantiles} is rather involved,
the content of the lemma is simply that upper bounds on the coefficients
of variation of two random variables let us multiplicatively relate
non-extreme quantiles and means of the random variables.

\section{Crossing quantile lower bounds\label{sec:lowerbounds}}

Our goal in this section is to obtain lower bounds on quantiles of
the left-right crossing distance of a large box in terms of the easy
crossing quantiles of smaller boxes. We first define and introduce
basic properties of what we call \emph{passes}, which represent smaller
boxes through which a path through a larger box must cross.

Let $K,L\ge2$ and let $S=2^{s}$. Let $\mathfrak{R}=[0,KS)\times[0,LS)$.
\begin{defn}
A \emph{pass} $\mathfrak{P}$ of $\mathfrak{R}$ at scale $S$ is
a $2S\times S$ or $S\times2S$ dyadic subrectangle of $\mathfrak{R}$.
\end{defn}

\begin{lem}
\label{lem:adjacent-pass}Let $\pi$ be a left–right crossing of $\mathfrak{R}$.
If $\pi$ enters an $S\times S$ box $\mathfrak{C}$ such that $\overline{\mathfrak{C}}\subseteq\mathfrak{R}$,
then $\pi$ must easy-cross a pass that intersects $\overline{\mathfrak{C}}$.
\end{lem}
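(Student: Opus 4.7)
The strategy is to extract a sub-path of $\pi$ whose traversal of the annulus $\overline{\mathfrak{C}} \setminus \mathrm{int}(\mathfrak{C})$ forces it to cross some pass. First I would assume (generically) that $\overline{\mathfrak{C}}$ lies in the strict interior of $\mathfrak{R}$, so that $\pi$ enters $\overline{\mathfrak{C}}$ from outside before first reaching $\mathfrak{C}$; the edge cases where $\overline{\mathfrak{C}}$ abuts $\partial \mathfrak{R}$ are handled by essentially the same argument restricted to the relevant half. Let $\sigma \subseteq \pi$ be the sub-path beginning at the last time $\pi$ enters $\overline{\mathfrak{C}}$ before its first visit to $\mathfrak{C}$ and ending at that first visit. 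Then $\sigma$ is contained entirely in $\overline{\mathfrak{C}} \setminus \mathrm{int}(\mathfrak{C})$, starts on $\partial \overline{\mathfrak{C}}$, and terminates just as it steps into $\mathfrak{C}$ from one of the four adjacent boxes N, E, S, or W in the $3\times 3$ decomposition of $\overline{\mathfrak{C}}$.

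By 4-fold rotational symmetry I may assume $\sigma$'s last step exits the W box into $\mathfrak{C}$. I then consider the last entry of $\sigma$ into W: its neighbors in the annulus are precisely the exterior of $\overline{\mathfrak{C}}$ (to the west), NW (to the north) and SW (to the south), since the east side of W is the west side of $\mathfrak{C}$ which $\sigma$ has not yet entered. If $\sigma$ enters W from the western exterior, then the sub-path from that entry to the end of $\sigma$ is contained in W and connects its west side to its east side, realising a crossing of the square pass W, and we are done.

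In the remaining cases $\sigma$ enters W from NW or from SW. Then I would use the portrait $S \times 2S$ dyadic pass $\mathrm{W} \cup \mathrm{NW}$ or $\mathrm{W}\cup \mathrm{SW}$ --- depending on the parity of $b$, exactly one of these is a dyadic subrectangle of $\mathfrak R$. Tracing $\sigma$ back further into the adjacent corner box and repeating the case analysis (whether the earlier sub-sojourn entered that corner box from outside $\overline{\mathfrak{C}}$, from the adjacent side box, or from W itself), the recursion terminates in a bounded number of steps because the annulus contains only eight boundary boxes. At each termination I either exhibit a square pass crossed by $\sigma$, or combine two adjacent annular boxes into an $S \times 2S$ or $2S \times S$ pass whose longer (crossing) sides are realised by $\sigma$'s entry at $\partial \overline{\mathfrak{C}}$ and its eventual step into $\mathfrak{C}$.

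The main obstacle is the bookkeeping of dyadic alignments: which of the candidate elongated passes W+NW, W+SW, NW+N, SW+S, etc.\ is actually a valid dyadic pass depends on the parities of the integer coordinates $(a,b)$ defining $\mathfrak{C}$. The saving grace is that for each pair of neighbouring annular boxes exactly one of the two alignments along each axis is dyadic at scale $S$, and the 8-fold symmetry of $\overline{\mathfrak{C}}$ together with the ability to combine boxes in the other direction ensures that in every parity class at least one suitable pass exists and is crossed by the sub-path $\sigma$ produced by the recursion. The topological content --- that a path from the outer boundary of an annulus to its inner boundary cannot avoid ``crossing'' some natural chord --- is what drives the argument; the lattice bookkeeping only certifies that the chord can always be taken to be a dyadic pass.
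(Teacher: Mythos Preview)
Your approach is essentially the right way to expand the paper's terse proof, which simply reads: ``Since $\pi$ is a left--right crossing of $\mathfrak{R}$, $\pi$ must at some point leave $\overline{\mathfrak{C}}$. And it is easy to see that in order to cross the annulus $\overline{\mathfrak{C}}\setminus\mathfrak{C}$, $\pi$ must cross a pass contained in $\overline{\mathfrak{C}}$.'' Your extraction of the sub-path $\sigma$ traversing the annulus and the recursive case analysis on which of the eight peripheral boxes it occupies is exactly the content behind ``easy to see.''

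However, your dyadic-parity bookkeeping is a red herring. In this paper, a ``dyadic'' $2S\times S$ or $S\times 2S$ pass means an $S$-grid-aligned rectangle (a union of two adjacent dyadic $S\times S$ cells), \emph{not} a $2S$-aligned one. This is confirmed by \figref{passes}, where some of the depicted $S\times 2S$ passes sit at odd multiples of $S$. Consequently \emph{both} $\mathrm{W}\cup\mathrm{NW}$ and $\mathrm{W}\cup\mathrm{SW}$ are always valid passes, and the entire ``parity class'' discussion can be dropped.

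One point deserves more care than you give it. When $\sigma$ last enters $\mathrm{W}$ from $\mathrm{NW}$, you cannot simply ``use the pass $\mathrm{W}\cup\mathrm{NW}$'': crossing this portrait pass means connecting its \emph{left} and \emph{right} (long) sides, and a sub-path going $\mathrm{NW}\to\mathrm{W}\to\mathfrak{C}$ only touches the right side. You must trace back to where $\sigma$ last entered $\mathrm{W}\cup\mathrm{NW}$: if from the west, you get the desired crossing of $\mathrm{W}\cup\mathrm{NW}$; if from the north of $\mathrm{NW}$, the sojourn in $\mathrm{NW}$ alone is a north--south square crossing; if from the east of $\mathrm{NW}$ (i.e.\ from $\mathrm{N}$), one more step of the recursion produces a crossing of either the square $\mathrm{N}$ or the landscape pass $\mathrm{NW}\cup\mathrm{N}$. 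The recursion does terminate as you say, but the pass that is eventually exhibited need not contain $\mathrm{W}$ at all.

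Finally, the edge case you flag (when $\overline{\mathfrak{C}}$ abuts $\partial\mathfrak{R}$) is handled automatically by the paper's orientation of the argument: rather than looking at the sub-path \emph{entering} $\mathfrak{C}$, use the sub-path from $\mathfrak{C}$ \emph{outward} to the first exit from $\overline{\mathfrak{C}}$, which exists because $\pi$ must eventually leave $\overline{\mathfrak{C}}$ to reach the far side of $\mathfrak{R}$.
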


\begin{proof}
Since $\pi$ is a left–right crossing of $\mathfrak{R}$, $\pi$ must
at some point leave $\overline{\mathfrak{C}}$. And it is easy to
see that in order to easy-cross from the inside to the outside of
the annulus $\overline{\mathfrak{C}}\setminus\mathfrak{C}$, $\pi$
must easy-cross a pass intersecting $\overline{\mathfrak{C}}$.
\end{proof}
\begin{defn}
For a path $\pi$, let $\mathcal{P}(\pi)$ be a maximum-size collection
of passes $\mathfrak{P}$ easy-crossed by $\pi$ such that the $\overline{\mathfrak{P}}$s
are disjoint. (See \figref{passes}.)
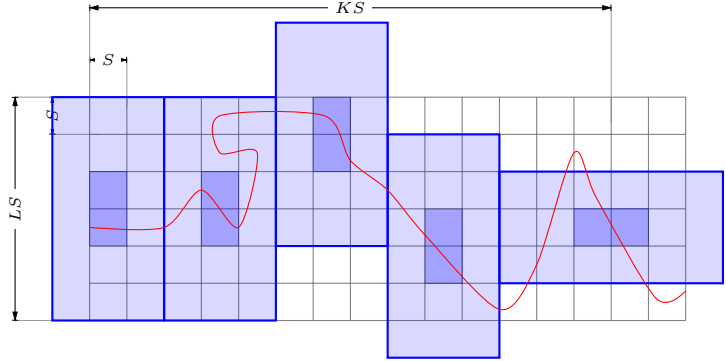
\begin{figure}[t]
\begin{centering}
\tiny
\begin{tikzpicture}[x=0.39in,y=0.39in]
\dimline[extension start length=0.25in,extension end length=0.25in]{(-0.5,2.5)}{(-0.5,3)}{$S$}
\dimline[extension start length=0.25in,extension end length=0.25in]{(0,3.5)}{(0.5,3.5)}{$S$}
\dimline[extension start length=0.5in,extension end length=0.5in]{(-1,0)}{(-1,3)}{$LS$}
\dimline[extension start length=0.6in,extension end length=0.6in]{(0,4.2)}{(7,4.2)}{$KS$}
\draw[step=0.5,gray,thin] (0,0) grid (8,3);
\draw[fill=blue,opacity=0.25] (0,1) rectangle (0.5,2);
\draw[thick,blue,fill=blue,fill opacity=0.15] (-0.5,0) rectangle (1,3);
\draw[fill=blue,opacity=0.25] (1.5,1) rectangle (2,2);
\draw[thick,blue,fill=blue,fill opacity=0.15] (1,0) rectangle (2.5,3);
\draw[fill=blue,opacity=0.25] (3,2) rectangle (3.5,3);
\draw[thick,blue,fill=blue,fill opacity=0.15] (2.5,1) rectangle (4,4);
\draw[fill=blue,opacity=0.25] (4.5,0.5) rectangle (5,1.5);
\draw[thick,blue,fill=blue,fill opacity=0.15] (4,-0.5) rectangle (5.5,2.5);
\draw[fill=blue,opacity=0.25] (6.5,1) rectangle (7.5,1.5);
\draw[thick,blue,fill=blue,fill opacity=0.15] (5.5,0.5) rectangle (8.5,2);
\draw[red] plot [smooth] coordinates { (0,1.25) (1,1.25) (1.5,1.75) (2,1.25) (2.25,2.25) (1.75,2.25) (1.75, 2.75) (3.15,2.75) (3.5,2.15) (4,1.75) (4.5,1.15) (5.5,0.15) (6,0.75) (6.5,2.25)  (6.8,1.65) (7.6,0.3) (8,0.4) };
\end{tikzpicture}
\par\end{centering}
\caption{\label{fig:passes}$\mathcal{P}(\pi)$ for a crossing $\pi$. The
darker boxes are the $\mathfrak{P}_{i}$s while the lighter, surrounding
boxes are the $\overline{\mathfrak{P}_{i}}s$. }
\end{figure}
 For $N\le|\mathcal{P}(\pi)|$, let $\mathcal{P}_{N}(\pi)=\mathcal{P}(\xi)$
where $\xi$ is the minimal initial subpath of $\pi$ such that $|\mathcal{P}(\xi)|\ge N$.
\end{defn}

\begin{prop}
\label{prop:longpathpasses}There is a constant $c_{\mathrm{PD}}$
so that $|\mathcal{P}(\pi)|\ge c_{\mathrm{PD}}\|\pi\|_{S}$. (The
subscript stands for ``pass density.'')
\end{prop}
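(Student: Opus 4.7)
The plan is a greedy packing argument resting on two elementary bounded-overlap facts: since each pass has side length at most $2S$ and its blow-up side length at most $6S$, (i) any fixed pass $\mathfrak{P}$ is contained in $\overline{\mathfrak{C}}$ for at most an absolute constant $C_{1}$ of the dyadic $S\times S$ boxes $\mathfrak{C}$ (namely those within $L^{\infty}$-distance $O(S)$ of $\mathfrak{P}$), and (ii) any fixed pass's blow-up intersects the blow-ups of at most an absolute constant $C_{2}$ of other passes. Both constants depend only on the definitions and not on $K,L,S,\pi$.

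I would then proceed in three steps. First, for each dyadic $S\times S$ box $\mathfrak{C}$ that $\pi$ enters and satisfies $\overline{\mathfrak{C}}\subseteq\mathfrak{R}$, apply \lemref{adjacent-pass} to obtain a pass $\mathfrak{P}_{\mathfrak{C}}\subseteq\overline{\mathfrak{C}}$ crossed by $\pi$. (The left--right crossing hypothesis of \lemref{adjacent-pass} is used only to force $\pi$ to leave $\overline{\mathfrak{C}}$, so for a general path one simply discards the at most two boxes containing its endpoints.) Write $N_{\mathrm{int}}$ for the number of such interior boxes. Second, by fact (i), the image of the map $\mathfrak{C}\mapsto\mathfrak{P}_{\mathfrak{C}}$ contains at least $N_{\mathrm{int}}/C_{1}$ distinct passes, each crossed by $\pi$. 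Third, run a greedy algorithm on these distinct passes in arbitrary order, accepting a pass only if its blow-up is disjoint from the blow-ups of those previously accepted; by fact (ii), each acceptance blocks at most $C_{2}$ further passes, so the algorithm outputs a valid admissible collection of at least $N_{\mathrm{int}}/(C_{1}(C_{2}+1))$ passes, which lower-bounds $|\mathcal{P}(\pi)|$.

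The main obstacle is the final link: bounding $N_{\mathrm{int}}$ below by a constant multiple of $\|\pi\|_{S}$. The non-interior dyadic $S\times S$ boxes form a boundary ring of cardinality $O(K+L)$, so $N_{\mathrm{int}}\ge\|\pi\|_{S}-O(K+L)$, which is $\Omega(\|\pi\|_{S})$ as soon as $\|\pi\|_{S}\gtrsim K+L$. In the use cases of this proposition, $\pi$ is (or contains) an optimum-weight crossing of $\mathfrak{R}$, and any such crossing must meet all $K$ columns (or all $L$ rows), making this automatic. Paths concentrated entirely in the boundary ring would have to be handled by a small extension—either enlarging the pass family to include boundary-touching dyadic rectangles of the allowed shapes, or absorbing a worst-case ratio into $c_{\mathrm{PD}}$—but this is not needed for the intended applications, so the argument is essentially purely geometric and independent of the field $Y$.
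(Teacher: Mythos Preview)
The paper states this proposition without proof, treating it as an elementary geometric fact, so there is no argument to compare against; your greedy bounded-overlap approach is the natural one and is essentially what the authors presumably had in mind.

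Two minor corrections. First, the boxes $\mathfrak{C}$ for which $\pi$ may fail to exit $\overline{\mathfrak{C}}$ are not just the two containing the endpoints but all $\mathfrak{C}$ with an endpoint of $\pi$ lying in $\overline{\mathfrak{C}}$---at most $18$ rather than $2$---which is of course still harmless. Second, your proposed fallback of ``absorbing a worst-case ratio into $c_{\mathrm{PD}}$'' cannot work as stated: a path running entirely along $\partial\mathfrak{R}$ has $N_{\mathrm{int}}=0$, so no choice of constant rescues the bound from $N_{\mathrm{int}}$ alone.

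The clean way to close the boundary gap (and obtain the proposition for arbitrary paths, as needed e.g.\ in \propref{invholder}) is to note that \lemref{adjacent-pass} extends verbatim to boxes $\mathfrak{C}$ with $\overline{\mathfrak{C}}\not\subseteq\mathfrak{R}$: since $\pi\subset\mathfrak{R}$, a subpath from $\mathfrak{C}$ to the exterior of $\overline{\mathfrak{C}}$ must exit through $\partial\overline{\mathfrak{C}}\cap\mathfrak{R}$ and hence traverse the partial annulus $(\overline{\mathfrak{C}}\cap\mathfrak{R})\setminus\mathfrak{C}$, and the same finite case analysis as for the full annulus shows that this forces an easy crossing of some pass contained in $\overline{\mathfrak{C}}\cap\mathfrak{R}$. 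With this extension you may drop the interior/boundary distinction entirely, the $O(K+L)$ subtraction disappears, and your three-step argument yields the proposition uniformly.
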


\begin{proof}
This follows easily from \lemref{adjacent-pass} and the fact that
passes are of a fixed size.
\end{proof}
\begin{lem}
\label{lem:leftrightpasses}If $\pi$ is a left–right crossing of
$\mathfrak{R}$, then $|\mathcal{P}(\pi)|\ge K/6$.
\end{lem}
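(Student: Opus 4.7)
My plan is to partition $\mathfrak{R}$ into $\lfloor K/3\rfloor$ vertical slabs of width $3S$, and in each slab exhibit a width-$S$ dyadic pass in the middle column crossed by $\pi$. A width-$S$ pass has blow-up of horizontal extent exactly $3S$, which fits inside its slab; thus the blow-ups across distinct slabs are horizontally (hence completely) disjoint. Summed over all slabs this yields $\lfloor K/3\rfloor\ge K/3$ passes, up to minor boundary bookkeeping.

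To extract the relevant sub-path for a slab $\mathfrak{S}_k = [3kS, 3(k+1)S)\times[0,LS)$, set $t^*_k = \min\{t:\pi(t)\in\text{col.~}3k+2\}$ and $t^{**}_k = \max\{t<t^*_k:\pi(t)\in\text{col.~}3k\}$; then $\pi_k' := \pi\restriction(t^{**}_k, t^*_k)$ is contained in the middle column $\mathfrak{M}_k = [(3k+1)S,(3k+2)S)\times[0,LS)$ and is a left\textendash right crossing of it. Let $[y_-,y_+]$ be the vertical range of $\pi_k'$ and $r_\pm = \lfloor y_\pm/S\rfloor$ the corresponding dyadic rows.

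I case-analyze on $r_+-r_-$. (1) If $r_- = r_+$, then $\pi_k'$ lies in the single $S\times S$ box $(3k+1,r_-)$ and crosses it left-to-right; take this as $\mathfrak{P}_k$. (2) If $r_+\ge r_-+2$, path-connectedness (tracing backwards from the first time $\pi_k'$ reaches row $r_-+2$) forces a sub-path that enters $(3k+1,r_-+1)$ from below and exits through the top, producing a bottom-to-top crossing of this $S\times S$ pass. (3) If $r_+ = r_-+1$ with $r_-$ even, the dyadic $S\times 2S$ pass $[(3k+1)S,(3k+2)S)\times[r_-S,(r_-+2)S)$ contains $\pi_k'$ entirely and is crossed by it left-to-right (connecting its two longer sides). (4) If $r_+ = r_-+1$ with $r_-$ odd, the natural $S\times 2S$ pass is non-dyadic, and in fact one can check directly that no width-$S$ dyadic pass in $\mathfrak{M}_k$ is crossed by $\pi_k'$: every candidate sub-path enters and exits the relevant box through non-opposite sides. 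In this case I would fall back on the dyadic $S\times 2S$ pass at column $3k$ or $3k+2$ with rows $(r_--1,r_-)$ (dyadic since $r_-$ is odd), which the full path $\pi$ crosses via its horizontal transit through row $r_-$ in the neighboring column just before entering $\mathfrak{M}_k$; the pass's blow-up then extends across the slab boundary, so maintaining disjointness demands a coordinated selection across slabs, organized as a left-to-right sweep with consistent fallback rules.

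The main obstacle is the fourth case: the dyadic parity obstruction prevents the direct middle-column construction, and maintaining the disjoint-blow-up invariant under the fallback requires combinatorial care, with Lemma~\ref{lem:adjacent-pass} applied at suitable boundary boxes. The first three cases are essentially mechanical consequences of path-connectedness and dyadic alignment, so it is case (4) that is the technical heart of the proof.
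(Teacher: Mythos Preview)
Your overall plan---one pass per column, then thin to every third column so the blow-ups are horizontally disjoint---is exactly the paper's argument. The paper's one-line proof just records the first step (``in order for $\pi$ to cross each column of width $S$, it must cross a pass contained entirely within that column''); the factor $1/3$ in the conclusion is the implicit thinning. Your cases (1)--(3) are a correct unpacking of that sentence.

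The complication you create in case (4), however, is not real: it comes from reading ``dyadic'' too strictly. In this paper a pass is a subrectangle aligned to the $S$-grid; the $2S$-long side of an $S\times 2S$ pass is \emph{not} required to start at an even multiple of $S$. In other words, any two vertically adjacent $S\times S$ dyadic boxes form a valid $S\times 2S$ pass, regardless of the parity of the lower row index. You can see independently that this must be the intended reading from \lemref{adjacent-pass}: the blow-up $\overline{\mathfrak{C}}$ of an arbitrary $S\times S$ dyadic box has its three rows at heights $j-1,j,j+1$, and the ``obvious'' proof of that lemma needs both $S\times 2S$ stacks $(j-1,j)$ and $(j,j+1)$ to count as passes---which fails under a $2S$-alignment requirement for exactly the same parity reason you identify.

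With this reading your case (4) collapses into case (3): when $r_+=r_-+1$, the rectangle $[(3k{+}1)S,(3k{+}2)S)\times[r_-S,(r_-{+}2)S)$ is a pass for every $r_-$, it contains $\pi_k'$, and $\pi_k'$ connects its two long sides. No fallback to an adjacent column is needed, and no cross-slab coordination is required. So the ``technical heart'' you flag simply disappears, and what remains is the paper's proof written out carefully.

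One small genuine bookkeeping point: partitioning into $\lfloor K/3\rfloor$ slabs can give strictly fewer than $K/3$ passes (e.g.\ $K=4$). If you instead select columns $0,3,6,\ldots$ you get $\lfloor (K-1)/3\rfloor+1\ge K/3$ of them, which matches the stated bound exactly.
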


\begin{proof}
In order for $\pi$ to cross each column of width $S$, it must easy-cross
a pass contained entirely within that column, and the blow-up of this
pass can have width at most $6$.
\end{proof}
\begin{lem}
\label{lem:subgraph-count}Let $G$ be a graph of maximum degree $d$,
and let $\{a_{1},\ldots,a_{M}\}$ be an arbitrary subset of the vertices
of $G$. Then the number of $n$-vertex connected subgraphs $H$ of
$G$ containing at least one $a_{i}$ is at most $Md^{2n}$.
\end{lem}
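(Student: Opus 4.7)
The plan is to use the standard encoding of a connected subgraph via a depth-first traversal of one of its spanning trees. The key point is that each such traversal is a walk in $G$ of bounded length, and the walk determines the vertex set of the subgraph, so counting walks gives a bound on the number of subgraphs.

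More precisely, first I would fix an index $i$ and bound the number of $n$-vertex connected subgraphs $H$ of $G$ that contain the vertex $a_i$. For each such $H$, pick an arbitrary spanning tree $T_H$ of $H$ rooted at $a_i$. Performing a depth-first search on $T_H$ starting from $a_i$ produces a closed walk $w_H$ in $G$ that starts and ends at $a_i$, visits every vertex of $H$, and traverses each edge of $T_H$ exactly twice; in particular $w_H$ has length exactly $2(n-1)$. The crucial observation is that the vertex set of $H$ is precisely the set of vertices visited by $w_H$, so the map $H \mapsto w_H$ is injective once we reconstruct $V(H)$ from $w_H$ and recover $H$ as the induced (connected) subgraph on those vertices.

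Next I would count the walks. Since $G$ has maximum degree $d$, the number of walks in $G$ of length $2(n-1)$ starting at the fixed vertex $a_i$ is at most $d^{2(n-1)} \le d^{2n}$. Combined with the injection above, this shows that the number of $n$-vertex connected subgraphs of $G$ containing $a_i$ is at most $d^{2n}$.

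Finally, a union bound over $i \in \{1, \ldots, M\}$ gives that the number of $n$-vertex connected subgraphs containing at least one $a_i$ is at most $M \cdot d^{2n}$ (a subgraph containing several of the $a_i$'s is counted multiple times, which only helps the upper bound). I do not expect any real obstacle here; the only mild subtlety is being careful that the DFS walk really does recover the vertex set of $H$ (so the encoding is injective), which is immediate from the fact that the spanning tree spans $H$.
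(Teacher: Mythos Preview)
Your proof is correct and essentially identical to the paper's: the paper phrases the key step as ``every graph on $n$ vertices contains a circuit of length at most $2n$ that visits every vertex,'' which is precisely the spanning-tree DFS walk you describe, and then bounds the number of such walks from each $a_i$ by $d^{2n}$ just as you do.
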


\begin{proof}
It is easy to see that every connected graph on $n$ vertices contains
a circuit of length at most $2n$ that visits every vertex. Thus the
number of subgraphs $H$ as specified in the statement is bounded
by the number of walks of length at most $2n$ starting at one of
the $a_{i}$s, which is evidently bounded by $Md^{2n}$.
\end{proof}
\begin{prop}
\label{prop:passchoice}We have constants $C_{\mathrm{p}}$ and $d_{\mathrm{p}}$
so that
\[
\left|\left\{ \mathcal{P}_{N}(\pi)\,:\,\text{\ensuremath{\pi\ }a left–right crossing of \ensuremath{\mathfrak{R}} such that \ensuremath{|\mathcal{P}(\pi)|\ge N}}\right\} \right|\le C_{\mathrm{p}}Ld_{\mathrm{p}}^{2N}.
\]
\end{prop}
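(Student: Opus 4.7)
The plan is to apply \lemref{subgraph-count} with vertices being passes of $\mathfrak{R}$, with marked vertices being the passes that can appear as the first element of $\mathcal{P}_N(\pi)$ for some left\textendash right crossing $\pi$, and with a graph structure tailored so that $\mathcal{P}_N(\pi)$ is always $G$-connected. I would define a graph $G$ whose vertices are all $S\times S$, $2S\times S$, and $S\times 2S$ dyadic subrectangles of $\mathfrak{R}$, and declare two passes $\mathfrak{P}, \mathfrak{P}'$ adjacent iff their blow-ups lie within $\ell^\infty$ distance $cS$ of each other, for an absolute constant $c$ chosen large enough for the connectivity argument below. Because pass sizes are in $\{S, 2S\}$ and their positions are restricted to the dyadic lattice at scale $S$, the maximum degree of $G$ is bounded by an absolute constant $d_{\mathrm{pass}}$.

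For the marked vertices, since $\pi$ starts on $\{0\}\times[0,LS)$, the first pass in $\mathcal{P}_N(\pi)$ (ordered by entry time) must touch the left edge. Enumerating the dyadic $S\times S$, $2S\times S$, and $S\times 2S$ passes with left side on $\{0\}\times[0,LS)$ yields at most $2L$ such passes (after absorbing a possible extra factor into the choice of constants).

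The key step is to show that $\mathcal{P}_N(\pi)$ is $G$-connected. Let $\xi$ be the minimal initial subpath of $\pi$ with $|\mathcal{P}(\xi)| \geq N$ and write $\mathcal{P}_N(\pi) = \{\mathfrak{P}_1, \ldots, \mathfrak{P}_N\}$. For any point $p$ on $\xi$ whose surrounding $S\times S$ dyadic box $\mathfrak{C}$ satisfies $\overline{\mathfrak{C}}\subseteq\mathfrak{R}$, \lemref{adjacent-pass} produces a pass $\mathfrak{P}'\subseteq\overline{\mathfrak{C}}$ crossed by $\xi$; maximality of $\mathcal{P}(\xi)$ forces $\overline{\mathfrak{P}'}$ to intersect some $\overline{\mathfrak{P}_j}$, placing $p$ within $\ell^\infty$ distance $O(S)$ of $\overline{\mathfrak{P}_j}$. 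Assigning to each such $p$ an index $\phi(p) = j$ of a nearby pass, one sees that $\phi$ is surjective because $\xi$ traverses each $\mathfrak{P}_j$, and when $\phi$ changes along $\xi$ the old and new passes are both within $O(S)$ of the transition point and hence $G$-adjacent provided $c$ was chosen appropriately. Consequently the image of $\phi$, which equals $\{\mathfrak{P}_1,\ldots,\mathfrak{P}_N\}$, is $G$-connected, and \lemref{subgraph-count} yields the claimed bound $2L\cdot d_{\mathrm{pass}}^{2N}$.

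The main obstacle I expect is carefully handling points of $\xi$ near $\partial\mathfrak{R}$, where the surrounding $S\times S$ box has $\overline{\mathfrak{C}}\not\subseteq\mathfrak{R}$ and \lemref{adjacent-pass} does not directly apply, so $\phi$ is not a priori defined there. I would manage this either by enlarging $c$ so that a boundary-adjacent point is automatically within range of the most recently ``visited'' pass (which remains near $\xi$ by the interior argument), or by augmenting the marked-vertex set with extra ``boundary-adjacent'' passes and adjusting the counting; either modification preserves the $2L\cdot d_{\mathrm{pass}}^{2N}$ bound up to rechoosing the constant $d_{\mathrm{pass}}$.
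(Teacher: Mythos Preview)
Your proposal is correct and takes essentially the same approach as the paper: both define a bounded-degree graph on passes, argue that $\mathcal{P}_N(\pi)$ is a connected subgraph containing one of $O(L)$ distinguished passes near the left edge, and invoke \lemref{subgraph-count}. The only cosmetic difference is that the paper declares two passes adjacent iff they \emph{could} occur consecutively in some $\mathcal{P}(\pi)$, which makes connectivity of $\mathcal{P}_N(\pi)$ automatic and shifts the substance (essentially your $\phi$-argument) into the degree bound via \lemref{adjacent-pass}.
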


\begin{proof}
Define a graph $G$ on the set of all passes inside $\mathfrak{R}$
by saying that two passes are adjacent if they could occur as adjacent
passes in a $\mathcal{P}(\pi)$. It is easy to see using \lemref{adjacent-pass}
that $G$ has bounded degree. Then by definition, $\mathcal{P}_{N}(\pi)$
induces an $N$-element connected subgraph of $G$, which in particular
contains a pass that intersects the first six columns, of which there
are at most a constant times $L$. \lemref{subgraph-count} then implies
the desired result.
\end{proof}
Before we prove the main proposition of this section, we need a version
of the Chernoff bound.
\begin{lem}
\label{lem:hugedeviations}Let $p<\frac{1}{2}$ and $X_{1},\ldots,X_{N}$
be iid $\Bernoulli(p)$ random variables. Then $\mathbf{P}\left[\frac{1}{N}\sum_{i=1}^{N}X_{i}\ge\frac{1}{2}\right]\le\left(4p\right)^{N/2}$.
\end{lem}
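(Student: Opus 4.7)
The plan is to bound the tail probability by a direct union bound over subsets of indices, which is clean and immediately yields the claimed constant.

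The event $\left\{\sum_{i=1}^{N} X_i \ge N/2\right\}$ implies the existence of a subset $I \subseteq \{1,\ldots,N\}$ of size $\lceil N/2 \rceil$ such that $X_i = 1$ for all $i \in I$. Since the $X_i$ are independent $\Bernoulli(p)$, the probability of any such fixed subset being entirely $1$ is $p^{\lceil N/2 \rceil}$. Applying a union bound over the $\binom{N}{\lceil N/2\rceil}$ possible subsets gives
\[
\mathbf{P}\!\left[\frac{1}{N}\sum_{i=1}^{N}X_{i}\ge\frac{1}{2}\right]\le \binom{N}{\lceil N/2\rceil}\, p^{\lceil N/2\rceil}.
\]

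To finish, I would use the crude estimates $\binom{N}{\lceil N/2\rceil}\le 2^{N}$ (since this is at most the sum of all binomial coefficients) and $p^{\lceil N/2\rceil}\le p^{N/2}$ (since $p<1$ and $\lceil N/2\rceil\ge N/2$), which combine to
\[
\binom{N}{\lceil N/2\rceil}\, p^{\lceil N/2\rceil}\le 2^{N}p^{N/2}=(4p)^{N/2},
\]
as claimed. There is essentially no obstacle here: the only care required is in handling the ceiling when $N$ is odd, and both simplifications above go through in the right direction. An alternative would be the exponential Markov (Chernoff) bound $\mathbf{P}(S_N\ge N/2)\le e^{-tN/2}(1-p+pe^t)^{N}$ optimized over $t>0$, but the union-bound route produces the precise constant $4p$ with no calculus.
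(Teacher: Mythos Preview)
Your argument is correct. The paper takes the Chernoff route you mention at the end: it applies Markov's inequality to $e^{\lambda S_N}$ to get $\left(\frac{pe^{\lambda}+1-p}{e^{\lambda/2}}\right)^{N}$, then substitutes $\lambda=\log\frac{1-p}{p}$ to obtain $\bigl(2\sqrt{p(1-p)}\bigr)^{N}=(4p(1-p))^{N/2}$, and finally drops the factor $(1-p)^{N/2}<1$. Your union-bound approach is more elementary and hits the stated constant $(4p)^{N/2}$ directly without any optimization; the paper's method, on the other hand, yields the sharper intermediate bound $(4p(1-p))^{N/2}$ before the last simplification, though that extra sharpness is not used anywhere. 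Both proofs are short and adequate for the purpose here.
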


\begin{proof}
We have 
\begin{multline*}
\mathbf{P}\left[\frac{1}{N}\sum_{i=1}^{N}X_{i}\ge\frac{1}{2}\right]=\mathbf{P}\left[\exp\left(\lambda\cdot\sum_{i=1}^{N}X_{i}\right)\ge e^{\lambda N/2}\right]\\
\le\frac{\left(\mathbf{E}e^{\lambda X_{i}}\right)^{N}}{e^{\lambda N/2}}=\left(\frac{pe^{\lambda}+1-p}{e^{\lambda/2}}\right)^{N}.
\end{multline*}
Putting $\lambda=\log\frac{1-p}{p}$ and using the fact that $p<1/2$
yields the result.
\end{proof}
Now we can prove an inductive lower bound on the crossing LFPP distance.
\begin{prop}
\label{prop:lowerbound-tail}Let $S=2^{s}$ and let $K,L\in\mathbf{N}$.
Let $\mathfrak{R}=[0,KS)\times[0,LS)$ and $\mathfrak{A}=[0,S)\times[0,2S)$.
Then, for any $p\in(0,1/2)$ and any $u\ge u_{0}$ (defined in \eqref{tailgff})
, we have
\begin{multline*}
\mathbf{P}\left[\min_{\pi}\psi(\pi;Y_{\mathfrak{R}})\le\frac{N}{2u}\Theta_{\mathrm{easy}}(\mathfrak{A})[p]\right]\\
\le C_{\mathrm{p}}L\left(2d_{\mathrm{p}}^{2}\sqrt{p}\right)^{N}+KL\exp\left(-\omega(1)\cdot\frac{(\log u)^{2}}{\log(K\vee L)}\right)
\end{multline*}
where the minimum is taken over all paths $\pi$ from left to right
in $\mathfrak{R}$ with $|\mathcal{P}(\pi)|\ge N$.
\end{prop}

\begin{proof}
As long as $\gamma$ is sufficiently small compared to $K$ and $L$,
and $u\ge u_{0}$, we have
\begin{align}
\mathbf{P}\bigg[\min_{\pi}\mathrlap{\psi(\pi;Y_{\mathfrak{R}})\le\frac{N}{2u}\Theta_{\mathrm{easy}}(\mathfrak{A})[p]\bigg]}\label{eq:lowerbound-tail-start}\\
 & \le\mathbf{P}\bigg[\min_{\pi}\frac{1}{N}\sum_{\mathfrak{P}\in\mathcal{P}_{N}(\pi)}\mathbf{1}\{\Psi_{\mathrm{easy}}(\mathfrak{P};Y_{\mathfrak{R}})\le\tfrac{1}{u}\Theta_{\mathrm{easy}}(\mathfrak{A})[p]\}\ge\frac{1}{2}\bigg]\nonumber \\
 & \le\mathbf{P}\bigg[\min_{\pi}\frac{1}{N}\sum_{\mathfrak{P}\in\mathcal{P}_{N}(\pi)}\mathbf{1}\{\Psi_{\mathrm{easy}}(\mathfrak{P})\le\Theta_{\mathrm{easy}}(\mathfrak{A})[p]\}\ge\frac{1}{2}\bigg]\nonumber \\
 & \qquad\qquad+KL\exp\left(-\omega(1)\cdot\frac{(\log u)^{2}}{\log(K\vee L)}\right),\nonumber 
\end{align}
where in the second inequality we use \eqref{tailgff} and \propref{passchoice}.
Now let $X_{1},\ldots,X_{N}$ be iid copies of $\mathbf{1}\{\Psi_{\mathrm{easy}}(\mathfrak{A})\le\Theta_{\mathrm{easy}}(\mathfrak{A})[p]\}$.
By \criref{disjoint-independent}, the first term on the right-hand
side of \eqref{lowerbound-tail-start} is bounded above by
\[
C_{\mathrm{p}}Ld_{\mathrm{p}}^{2N}\mathbf{P}\bigg[\frac{1}{N}\sum_{i=1}^{N}X_{i}\ge\frac{1}{2}\bigg],
\]
which is bounded above by $C_{\mathrm{p}}L\left(2d_{\mathrm{p}}^{2}\sqrt{p}\right)^{N}\:$
according to \lemref{hugedeviations}. This completes the proof.
\end{proof}
\begin{cor}
\label{corr:quantile-easy-relation}Fix a scale $S=2^{s}$ and let
$K,L\in\mathbf{N}$. Let $\mathfrak{R}=[0,KS)\times[0,LS)$ and $\mathfrak{A}=[0,S)\times[0,2S)$.
Then we have
\[
\Theta_{\mathrm{LR}}(\mathfrak{R})\left[C_{\mathrm{p}}L\left(2d_{\mathrm{p}}^{2}\sqrt{p}\right)^{K/3}+o_{K,L,p}(1)\right]\ge\frac{K}{12u_{0}}\Theta_{\mathrm{easy}}(\mathfrak{A})[p]
\]
and
\[
\mathbf{E}\Psi_{\mathrm{LR}}(\mathfrak{R})\ge\frac{K}{12u_{0}}\Theta_{\mathrm{easy}}(\mathfrak{A})[p]\cdot\left(1-C_{\mathrm{p}}L\left(2d_{\mathrm{p}}^{2}\sqrt{p}\right)^{K/2u_{0}}-o_{K,L,p}(1)\right).
\]
\end{cor}

\begin{proof}
If $\pi$ is a path from left to right in $\mathfrak{R}$, then by
\lemref{leftrightpasses}, we have $|\mathcal{P}(\pi)|\ge K/6$. \propref{lowerbound-tail}
then implies the first equation. The second equation follows immediately
from the first.
\end{proof}
We conclude this section with an inductive version of \corrref{quantile-easy-relation},
showing that some easy crossing quantile grows like $S^{\Omega(1)}$
in the scale $S$.
\begin{prop}
\label{prop:exponential-crossings}There are constants $p_{\mathrm{pl}},q_{\mathrm{pl}},a_{\mathrm{pl}}\in(0,1)$
and a constant $C_{\mathrm{pl}}>0$ so that, if $p<p_{\mathrm{pl}}$
and $\gamma$ is sufficiently small compared to $p$, then, putting
$\mathfrak{R}_{t}=[0,2^{t})\times[0,2^{t+1})$, for any $s>t$ we
have
\[
\Theta_{\mathrm{easy}}(\mathfrak{R}_{t})[p_{\mathrm{pl}}]\le C_{\mathrm{pl}}a_{\mathrm{pl}}^{s-t}\Theta_{\mathrm{easy}}(\mathfrak{R}_{s})[q_{\mathrm{pl}}].
\]
(The subscript $\mathrm{pl}$ stands for ``power-law.'')
\end{prop}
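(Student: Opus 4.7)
The plan is to iterate Corollary \ref{cor:quantile-easy-relation} along a geometric sequence of scales, growing the easy-crossing quantile by a constant multiplicative factor at each step. A useful observation is that $\mathfrak{R}_{s+c}$ is portrait, so its LR crossing (which the Corollary controls) coincides with its easy crossing. Applying Corollary \ref{cor:quantile-easy-relation} with $\mathfrak{A}=\mathfrak{R}_s$, $S=2^s$, $K=2^c$, and $L=2^{c+1}$ yields
\[
\Theta_{\mathrm{easy}}(\mathfrak{R}_{s+c})\bigl[2^{c+2}(2d_{\mathrm{pass}}^2\sqrt{p})^{2^c/3}+o_{c,p}(1)\bigr] \ge \frac{2^c}{6u_0}\,\Theta_{\mathrm{easy}}(\mathfrak{R}_s)[p].
\]

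First, I fix $c$ (and hence $K_{\mathrm{pl}}:=2^c$) large enough that $2^c>6u_0$ and $2^c/3>1$, so that $a_{\mathrm{pl}}:=(6u_0/2^c)^{1/c}\in(0,1)$. Next I choose $p_{\mathrm{pl}}$ small enough that $2^{c+2}(2d_{\mathrm{pass}}^2\sqrt{p_{\mathrm{pl}}})^{2^c/3}<p_{\mathrm{pl}}/2$; this is possible because the exponent $2^c/3$ of $p_{\mathrm{pl}}$ is strictly greater than $1$. Taking $\gamma$ sufficiently small in terms of $c$ and $p_{\mathrm{pl}}$ forces the $o(1)$ term above to be at most $p_{\mathrm{pl}}/2$, so the quantile level on the left-hand side is at most $p_{\mathrm{pl}}$. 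Monotonicity of the quantile function then gives $\Theta_{\mathrm{easy}}(\mathfrak{R}_{s+c})[p_{\mathrm{pl}}]\ge(2^c/(6u_0))\,\Theta_{\mathrm{easy}}(\mathfrak{R}_s)[p_{\mathrm{pl}}]$, and iterating $k=\lfloor(s-t)/c\rfloor$ times yields
\[
\Theta_{\mathrm{easy}}(\mathfrak{R}_{t+kc})[p_{\mathrm{pl}}] \ge \left(\frac{2^c}{6u_0}\right)^{\!k}\Theta_{\mathrm{easy}}(\mathfrak{R}_t)[p_{\mathrm{pl}}] = a_{\mathrm{pl}}^{-kc}\,\Theta_{\mathrm{easy}}(\mathfrak{R}_t)[p_{\mathrm{pl}}].
\]

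To handle a possible remainder $r:=s-t-kc\in[0,c)$, I apply the Corollary one final time (when $r\ge 1$) with $\mathfrak{A}=\mathfrak{R}_{t+kc}$ and $(K,L)=(2^r,2^{r+1})$; this yields a factor of at least $2^r/(6u_0)\ge 1/(3u_0)$ at the possibly higher quantile level $q(r):=2^{r+2}(2d_{\mathrm{pass}}^2\sqrt{p_{\mathrm{pl}}})^{2^r/3}+o(1)$. I then set $q_{\mathrm{pl}}:=\max\{p_{\mathrm{pl}},\,\max_{1\le r<c}q(r)\}$, which remains in $(0,1)$ provided $p_{\mathrm{pl}}$ is small enough. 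Combining the two estimates and using $a_{\mathrm{pl}}^{-kc}=a_{\mathrm{pl}}^{r-(s-t)}\ge a_{\mathrm{pl}}^{c}\,a_{\mathrm{pl}}^{-(s-t)}$, I obtain the desired inequality with $C_{\mathrm{pl}}=3u_0/a_{\mathrm{pl}}^{c}$.

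The main difficulty is aligning the quantile levels through the iteration: $c$ must be large enough both for the multiplicative growth $2^c/(6u_0)$ to exceed $1$ \emph{and} for the induced quantile argument $2^{c+2}(2d_{\mathrm{pass}}^2\sqrt{p})^{2^c/3}$ to be absorbable into $p$ itself. Since $c$, $K$, $L$, and $p$ stay fixed throughout the iteration, the $o(1)$ terms are controlled by a single smallness condition on $\gamma$, and the one-off remainder step only adds finitely many further such conditions.
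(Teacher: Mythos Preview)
Your proof is correct and follows essentially the same approach as the paper: iterate \corref{quantile-easy-relation} along a geometric sequence of scales of step $c$ (the paper writes $k=\log_2 K$), choosing $c$ large enough that the multiplicative gain $2^c/(6u_0)$ exceeds $1$ and $p_{\mathrm{pl}}$ small enough that the induced quantile level can be absorbed back into $p_{\mathrm{pl}}$, then handle the remainder $r<c$ with one final application at the cost of passing to a larger quantile $q_{\mathrm{pl}}$. One minor slip: the exponent of $p_{\mathrm{pl}}$ in $2^{c+2}(2d_{\mathrm{pass}}^2\sqrt{p_{\mathrm{pl}}})^{2^c/3}$ is $2^c/6$, not $2^c/3$, but since your condition $2^c>6u_0>6$ already forces $2^c/6>1$, the absorption argument goes through unchanged.
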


\begin{proof}
Fix a large constant $K$, to be chosen later. Write $s=t+nk+r$,
where $0\le r<k=\log_{2}K$. Let $R=2^{r}$. We can calculate, using
\corrref{quantile-easy-relation},

\begin{multline*}
\Theta_{\mathrm{easy}}(\mathfrak{R}_{t})[p]\le\frac{12u_{0}}{K}\Theta_{\mathrm{easy}}(\mathfrak{R}_{t+k})\left[2C_{\mathrm{p}}K\left(d_{\mathrm{p}}^{2}\sqrt{p}\right)^{K/3}+o_{K}(1)\right]\\
\le\frac{12u_{0}}{K}\Theta_{\mathrm{easy}}(\mathfrak{R}_{t+k})[p],
\end{multline*}
where in the second inequality we use the assumption that $p$ is
sufficiently small, $K$ is sufficiently large, and $\gamma$ is sufficiently
small (compared to $p$ and $K$). By induction we obtain
\begin{align*}
\Theta_{\mathrm{easy}}(\mathfrak{R}_{t})[p] & \le\left(\frac{12u_{0}}{K}\right)^{n}\Theta_{\mathrm{easy}}(\mathfrak{R}_{t+nk})[p]\\
 & =\left(\frac{(12u_{0})^{1/k}}{2}\right)^{kn}\Theta_{\mathrm{easy}}(\mathfrak{R}_{t+nk})[p].
\end{align*}
Thus, applying \corrref{quantile-easy-relation} once more, we get
\begin{multline*}
\Theta_{\mathrm{easy}}(\mathfrak{R}_{t+nk})[p]\le\frac{12u_{0}}{R}\Theta_{\mathrm{easy}}(\mathfrak{R}_{t+nk+r})\left[4R\left(d_{\mathrm{p}}^{2}\sqrt{p}\right)^{R/3}+o_{K}(1)\right]\\
\le\frac{12u_{0}}{R}\Theta_{\mathrm{easy}}(\mathfrak{R}_{t+nk+r})[q_{\mathrm{pl}}],
\end{multline*}
where $p$, $K$, $\gamma$ are restricted so that $q_{\mathrm{pl}}$
can be chosen to be less than $1$. Thus we get the desired inequality
with $a_{\mathrm{pl}}=(12u_{0})^{1/k}/2\in(0,1)$ as long as $K$
is sufficiently large.
\end{proof}

\section{RSW result\label{sec:rsw}}

We will prove the following RSW result relating easy crossings to
hard crossings of $2\times1$ rectangles.
\begin{thm}
\label{thm:rsw}There are constants $\delta_{\mathrm{RSW}}>0$, $C_{\mathrm{RSW}}<\infty$,
$p_{\mathrm{RSW}}>0$ so that 
\begin{equation}
p_{\mathrm{RSW}}\le1/(32\cdot d_{\mathrm{p}}^{2})^{2}\label{eq:prswsmall}
\end{equation}
and, if $\gamma$ is sufficiently small then the following holds.
Let $\mathfrak{R}=[0,S)\times[0,2S)$. Suppose that, for all $\mathfrak{A}\subseteq\mathfrak{R}$
of aspect ratio between $1/2$ and $2$ inclusive, we have
\begin{equation}
\CV^{2}(\Psi_{\mathrm{easy}}(\mathfrak{A}))<\delta_{\mathrm{RSW}}.\label{eq:cvbound-1}
\end{equation}
Then
\[
\Theta_{\mathrm{hard}}(\mathfrak{R})[p_{\mathrm{RSW}}]\le C_{\mathrm{RSW}}\Theta_{\mathrm{easy}}(\mathfrak{R})[p_{\mathrm{RSW}}].
\]
\end{thm}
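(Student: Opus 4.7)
The plan is to adapt Tassion's RSW strategy \cite{tassion} to the weighted Liouville FPP setting. Tassion's key device is an intrinsically-defined length $\alpha$ describing how ``concentrated'' along one side of the rectangle a typical easy crossing begins, and an induction that iteratively reduces $\alpha$ via the FKG inequality and reflection symmetry. The new difficulty here is that gluing crossings adds their weights; to absorb the resulting multiplicative losses, the CV hypothesis \eqref{eq:cvbound-1} is used in conjunction with \eqref{eq:concentration-quantiles} to convert weight blow-ups into quantile changes at constant cost.

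I would first introduce, for each subrectangle $\mathfrak{A} \subseteq \mathfrak{R}$ of aspect ratio in $[1/2, 2]$ and each weight threshold $w$, a length
\[
\alpha(\mathfrak{A}, w) = \inf\bigl\{a > 0 : \mathbf{P}[\exists\,\pi \subseteq \mathfrak{A}\text{ from }\{0\}\times[0,a]\text{ to the right side of }\mathfrak{A},\ \psi(\pi;Y_{\mathfrak{A}}) \le w] \ge \rho_0\bigr\}
\]
for a fixed working probability $\rho_0 \in (p_{\mathrm{RSW}}, 1/2)$. For $w$ slightly above $\Theta_{\mathrm{easy}}(\mathfrak{A})[\rho_0]$, the length $\alpha(\mathfrak{A}, w)$ is at most the height of $\mathfrak{A}$. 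The core step is the Tassion-style reduction: prove that $\alpha(\mathfrak{A}, w') \le c_0 \cdot (\text{height of }\mathfrak{A})$ for some fixed $c_0 < 1$ and $w' = O(w)$. The reduction combines, via FKG applied to the positively-correlated decoupled field $Y_{\mathfrak{A}}$ of \eqref{eq:subfield}, a crossing starting from $\{0\}\times[0,\alpha]$, its vertical reflection starting from $\{0\}\times[h_{\mathfrak{A}}-\alpha, h_{\mathfrak{A}}]$, and an auxiliary easy crossing of a sub-rectangle connecting the two; the resulting composite originates from a strictly smaller initial interval. Iterating $O(1)$ times drives $\alpha$ to a constant fraction of the side length while multiplying $w$ only by a constant.

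Once $\alpha$ is a constant fraction of the side length, a standard RSW-type gluing argument produces a BT (hard) crossing of $\mathfrak{R}$: combine BT crossings of the top and bottom sub-squares of $\mathfrak{R}$ (which satisfy CV bounds under the easy-crossing hypothesis, since a square is symmetric under $90^\circ$ rotation so LR and BT crossings are equidistributed) and an LR connector in an auxiliary sub-rectangle straddling the midheight $y=S$. Small $\alpha$ ensures these pieces meet; FKG gives joint probability at least $\rho_0^{O(1)}$; and the total weight is $O(w) \asymp \Theta_{\mathrm{easy}}(\mathfrak{R})[\rho_0]$. Choosing $\rho_0$ so that $\rho_0^{O(1)} \ge p_{\mathrm{RSW}}$ and applying \eqref{eq:concentration-quantiles} to pass from $\Theta[\rho_0]$ to $\Theta[p_{\mathrm{RSW}}]$ at constant cost yields the desired bound. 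The side condition $p_{\mathrm{RSW}} \le 1/(32\, d_{\mathrm{pass}}^2)^2$ is used to ensure that \propref{lowerbound-tail}-type estimates can be invoked where needed inside the argument.

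The main obstacle I anticipate is that in Tassion's unweighted setting each iteration of the reduction preserves the working probability $\rho_0$, whereas here it multiplies $w$ by a constant and slightly degrades $\rho_0$. Keeping both the number of iterations and the cumulative multiplicative loss under control requires a careful choice of $\rho_0$, with the CV hypothesis used repeatedly to absorb weight increases into quantile shifts; this forces delicate parameter choices that may appear unmotivated at first glance. A secondary issue, handled via the coarse-field decomposition \eqref{eq:subfield}, is that the GFFs on different subrectangles are only approximately independent: all FKG and independence steps must be performed on the decoupled fields $Y_{\mathfrak{A}}$, with the $1+o(1)$ multiplicative errors of \eqref{eq:relatescale} and the Gaussian tail losses of \eqref{eq:tailgff} absorbed into the final constants.
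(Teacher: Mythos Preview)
Your proposal captures the right flavor---Tassion's $\alpha$ (the paper calls it $\lambda$), FKG gluing, and X-shape constructions all appear---but it misidentifies the structure of the induction, and the claim ``iterating $O(1)$ times drives $\alpha$ to a constant fraction of the side length while multiplying $w$ only by a constant'' is exactly where the argument breaks in the weighted setting.

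The paper's reduction is not a single-scale iteration. At a fixed scale $i$ there is no mechanism to force $\lambda_i$ to be large: if $\lambda_i^y < \eta T_i/8$, the only way forward (\lemref{usethex}) is to borrow an X-shape from the \emph{previous} scale $i-1$, which requires $\lambda_i^y \le \tfrac{7}{4}\lambda_{i-1}^z$. To guarantee that $\lambda$ is eventually macroscopic one needs \lemref{use-independence}, which exploits the near-independence of annuli at \emph{many geometrically separated scales}; this is a genuinely multiscale step absent from your sketch. The upshot (\lemref{tassion-conclusion} and \lemref{roundit}) is that the hard-crossing weight at scale $i$ is bounded not by $O(1)\cdot\Theta_{\mathrm{easy}}$ but by a sum $\sum_{j} c_5^{\,j}\, W^{(\eta)}_{i-j\Delta}$ with $c_5$ a fixed constant $>1$. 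The weight blow-up is geometric in the number of scale jumps, not $O(1)$.

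What rescues the argument is the a~priori power-law lower bound of \propref{exponential-crossings}: $\Theta_{\mathrm{easy}}(\mathfrak{R}_t)[p_{\mathrm{pl}}] \le C a_{\mathrm{pl}}^{s-t}\Theta_{\mathrm{easy}}(\mathfrak{R}_s)[q_{\mathrm{pl}}]$, which comes from the pass machinery of \secref{lowerbounds} and is precisely where the side condition $p_{\mathrm{RSW}} \le (32\,d_{\mathrm{pass}}^2)^{-2}$ is used. Choosing $\Delta$ large enough that $c_5\, a_{\mathrm{pl}}^{\Delta} < 1$ makes the sum converge; the CV hypothesis \eqref{cvbound-1} is then invoked (via \eqref{concentration-quantiles}) to translate between the various quantiles $p_{\mathrm{pl}}, q_{\mathrm{pl}}, p_{\mathrm{stretch}}, p_{\mathrm{RSW}}$ appearing along the way. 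Your proposal treats the CV hypothesis as the sole tool for absorbing weight growth, but without the power-law input there is nothing to absorb into.

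A secondary point: the paper does not work with squares but with rectangles of aspect ratio $\eta = 1 + 2^{-t_0}$ barely exceeding $1$. This is forced because the only a~priori input is an \emph{easy}-crossing quantile, and a square has no easy direction; your appeal to $90^\circ$ symmetry of squares sidesteps the issue of how one obtains the first crossing to glue.
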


Our argument is based on the beautiful proof of the RSW result established
for Voronoi percolation in~\cite{tassion}. While our proof has the
same structure and uses many of the same geometric constructions,
two factors make our setting substantially more complicated than the
Voronoi percolation case:
\begin{enumerate}
\item We need to take the weights of crossings into account.
\item We do not have as strong a duality theory in the first-passage percolation
setting, so rather than comparing crossings for a square and a rectangle,
we compare crossings for the easy and hard directions of rectangles.
\end{enumerate}
While our argument is self-contained, a reader first equipped with
a thorough understanding of the arguments in~\cite{tassion} will
find it much easier to follow.

\subsection{Scale and aspect ratio setup}

Fix $p_{0}\in(0,p_{\mathrm{pl}})$, with $p_{\mathrm{pl}}$ as in
\propref{exponential-crossings}.

We will work with rectangles in the portrait orientation with aspect
ratio\textbf{ $\eta=1+2^{-t_{0}}$}, where $t_{0}$ is \emph{fixed}
but will be chosen later. It will be convenient to work at a series
of fixed scales where there are no rounding problems, so for $i\in\mathbf{N}$,
let $u_{i}=[i/2]$, $U_{i}=2^{u_{i}}$, and
\begin{equation}
T_{i}=2^{t_{0}+8}(3/2)^{2\{i/2\}}U_{i}=256\cdot(3/2)^{2\{i/2\}}\cdot2^{t_{0}+[i/2]},\label{eq:Ti-def}
\end{equation}
where $[x]$ is the integer part of $x$ and $x=[x]+\{x\}$. (By way
of illustration, we note that the first few terms of the sequence
$T_{0},T_{1},T_{2},\ldots$ are $2^{t_{0}+7}$ times $2,3,4,6,8,12,\ldots$.
The factor of $256$ is to accommodate occasions when we need to split
up the boxes in certain constructions and is not important to the
argument.) In particular, $T_{i+1}\in\left[4T_{i}/3,3T_{i}/2\right]$
for each $i$ and $\eta T_{i}\in\mathbf{N}$ for each $i$, and if
$j\ge i$, we have the simple estimates
\begin{equation}
\sqrt{2}^{j-i-1}T_{i}\le T_{j}\le\sqrt{2}^{j-i+1}T_{i}.\label{eq:Tij-estimates}
\end{equation}

Let $S_{i}=2^{s_{i}}=2^{t_{0}+9+\lceil i/2\rceil}$ be the least dyadic
integer greater than or equal to $T_{i}$. Let 
\begin{align*}
\mathfrak{R}_{i} & =[0,S_{i})\times[0,2S_{i}), & \mathfrak{R}_{i}^{(\eta)} & =[0,T_{i})\times[0,\eta T_{i}),
\end{align*}
and put 
\begin{align*}
w_{i}^{(\eta)} & =\Theta_{\text{easy}}(\mathfrak{R}_{i}^{(\eta)})[p_{0}], & W_{i}^{(\eta)} & =\max\limits _{j\le i}w_{i}^{(\eta)}.
\end{align*}
It will be convenient to put $w_{i}^{(\eta)}=W_{i}^{(\eta)}=0$ when
$i<0$.

In this section we will need notation for new types of crossing distances.
Let $\mathfrak{B}$ be a box with bottom-left corner $(x_{0},y_{0})$
and top-right corner $(x_{1},y_{1})$. If $I_{1},I_{2}\subset\mathbf{Z}$,
then let $\Psi_{I_{1},I_{2}}(\mathfrak{B};Y)$ denote the minimum
$Y$-weight of a crossing $\pi$in $\mathfrak{B}$ connecting $\{x_{0}\}\times(y_{0}+I_{1})$
and $\{x_{1}\}\times(y_{0}+I_{2})$. Let 
\begin{align*}
\Psi_{\mathrm{L},a,b} & =\Psi_{\{x_{0}\}\times[0,y_{1}-y_{0}),\{x_{1}\}\times(y_{0}+[a,b))}\\
\Psi_{a,b,\mathrm{R}} & =\Psi_{\{x_{0}\}\times(y_{0}+[a,b)),\{x_{1}\}\times[0,y_{1}-y_{0})}.
\end{align*}
Finally, define $\Psi_{\text{X};a}(\mathfrak{B};Y)$ to be the minimum
$Y$-weight of a crossing in $\mathfrak{B}$ that connects the four
segments $\{x_{0}\}\times[y_{0},\frac{y_{0}+y_{1}}{2}-a)$, $\{x_{1}\}\times[y_{0},\frac{y_{0}+y_{1}}{2}-a)$,
$\{x_{0}\}\times[\frac{y_{0}+y_{1}}{2}+a,y_{1})$, and $\{x_{1}\}\times[\frac{y_{0}+y_{1}}{2}+a,y_{1})$
(thus forming an ``X'' shape with each arm terminating at least
a distance $a$ from the horizontal midline of the box). We moreover
extend the $\pi$ and $\Theta$ notation accordingly as in \subsecref{notation}.
This notation is concordant with the $\mathcal{X}$ and $\mathcal{H}$
notation in~\cite{tassion}.

We aim to prove \thmref{rsw}, which is about portrait $1\times2$
rectangles; however, we will argue using rectangles which are portrait
but very close to square. In order to conclude, we will need to relate
the $w_{i}^{(\eta)}$s and the crossing quantiles for $1\times2$
rectangles. We do this in the following lemma and corollary.
\begin{lem}
\label{lem:squish}Let $S$ and $b<k$ be natural numbers and put
$\mathfrak{A}=[0,bS)\times[0,(b+1)S)$ and $\mathfrak{B}=[0,bS)\times[0,kS)$.
Then
\[
\mathbf{P}[\Psi_{\mathrm{easy}}(\mathfrak{B})\le y]\le2k\mathbf{P}[\Psi_{\mathrm{easy}}(\mathfrak{A})\le y]+o_{k}(1).
\]
\end{lem}

\begin{proof}
Divide the rectangle $[0,bS)\times[0,kS)$ into $k$ $bS\times S$
landscape subrectangles. Now a left–right crossing of $[0,bS)\times[0,kS)$
must either cross horizontally within a block of $b+1$ of these subrectangles
($k-b$ such blocks) or else cross vertically a block of $b$ of these
subrectangles ($k-b+1$ such blocks). Each of these events has probability
at most
\[
\mathbf{P}[\Psi_{\mathrm{LR}}(\mathfrak{A})\le y]+o_{|\mathfrak{B}/\mathfrak{A}|}(1)
\]
(using \eqref{relatescale}) so the conclusion of the lemma follows
from a union bound.
\end{proof}
\begin{cor}
\label{corr:squishdown-1}For any fixed $\eta>1$ the following holds.
There are constants $C_{\mathrm{str}}(\eta)<\infty$ and $p_{\mathrm{str}}(\eta)\in(0,1)$
so that, if $\gamma$ is sufficiently small, then $w_{i}^{(\eta)}\le C_{\mathrm{str}}(\eta)\cdot\Theta_{\mathrm{easy}}(\mathfrak{R}_{i})[p_{\mathrm{str}}(\eta)]$.
\end{cor}

\subsection{Gluing}

We now begin in earnest the proof of our RSW result.
\begin{lem}
\label{lem:setuplambda}There is a $p_{1}>0$, depending only on $p_{0}$,
so that the following holds. Let $y\ge w_{i}^{(\eta)}$ and let 
\begin{align}
f_{y}(\alpha,\beta) & =\mathbf{P}[\Psi_{\text{L},\alpha,\beta}(\mathfrak{R}_{i}^{(\eta)})\le y],\nonumber \\
g_{w,y}(\alpha) & =f_{w}(0,\alpha)-f_{y}(\alpha,\eta T_{i}/2),\text{ and}\nonumber \\
\lambda=\lambda_{i}^{y} & =\frac{\eta T_{i}}{8}\wedge\min\{\alpha\in\{1,\ldots,\eta T_{i}\}\mid g_{w_{i}^{(\eta)},y}(\alpha)\ge p_{0}/4\}.\label{eq:lambdadef}
\end{align}
Then $\lambda$ is a well-defined element of $[0,\eta T_{i}/8]$ and
the following two statements both hold:
\end{lem}

\begin{enumerate}
\item \label{enu:option}Either
\begin{enumerate}[ref=\theenumi (\alph*)]
\item \label{enu:very-close}$\mathbf{P}[\Psi_{\mathrm{LR}}([0,2T_{i})\times[0,\eta T_{i}))\le3y]\ge p_{1}$,
or
\item \label{enu:above-expensive}$\mathbf{P}[\Psi_{\mathrm{L},\lambda,\eta T_{i}}(\mathfrak{R}_{i}^{(\eta)})\le y]\ge p_{1}.$
\end{enumerate}
\item \label{enu:difference}If $\lambda<\eta T_{i}/8$, then
\[
\mathbf{P}[\Psi_{\mathrm{L},0,\lambda}(\mathfrak{R}_{i}^{(\eta)})\le w_{i}^{(\eta)}]\ge\frac{p_{0}}{4}+\mathbf{P}[\Psi_{\mathrm{L},\lambda,\eta T_{i}}(\mathfrak{R}_{i}^{(\eta)})\le y].
\]
\end{enumerate}
\begin{rem}
Note that $f_{y}(\alpha,\beta)$ is increasing in $y$, so $g_{w,y}(\alpha)$
is decreasing in $y$ and thus $\lambda_{i}^{y}$ is increasing in
$y$. Moreover, for each $i$, there is a $y_{i}^{*}$ so that
\begin{equation}
\lambda_{i}^{y_{i}^{*}}=\eta T_{i}/8.\label{eq:blowupthestartweight}
\end{equation}
\end{rem}

\begin{proof}
First note that $g_{w_{i}^{(\eta)},y}$ is increasing, we have $g_{w_{i}^{(\eta)},y}(0)<0$,
and $g_{w_{i}^{(\eta)},y}(\eta T_{i})=f_{w_{i}^{(\eta)}}(\eta T_{i})\ge p_{0}/2$.
Thus $\lambda$ is well-defined by the definition in the statement
of the theorem. Note that symmetry implies that, for any $\alpha\in(0,\ldots,\eta T_{i}/2)$,
\begin{multline*}
p_{0}/2\le f_{w_{i}^{(\eta)}}(0,\eta T_{i}/2)\le f_{w_{i}^{(\eta)}}(0,\alpha)+f_{w_{i}^{(\eta)}}(\alpha,\eta T_{i}/2)\\
\le f_{w_{i}^{(\eta)}}(0,\alpha)+f_{y}(\alpha,\eta T_{i}/2),
\end{multline*}
so (using \eqref{lambdadef}) 
\[
f_{w_{i}^{(\eta)}}(0,\lambda)\ge p_{0}/4+f_{y}(\lambda,\eta T_{i}/2)\ge p_{0}/4
\]
whenever $\lambda<\eta T_{i}/8$, and
\begin{equation}
f_{w_{i}^{(\eta)}}(0,\lambda-1)-f_{y}(\lambda-1,\eta T_{i}/2)<p_{0}/4.\label{eq:nextdown}
\end{equation}
In particular, statement~\ref{enu:difference} holds.

The proof of statement~\ref{enu:option} comes down to two cases,
depending on the value of $g_{w_{i}^{(\eta)},y}(\lambda)$.

\emph{Case 1. }If $g_{w_{i}^{(\eta)},y}(\lambda)\ge3p_{0}/8,$ then
this along with~\eqref{nextdown} implies that
\begin{multline*}
p_{0}/8<f_{w_{i}^{(\eta)}}(0,\lambda)-f_{y}(\lambda,\eta T_{i})-[f_{w_{i}^{(\eta)}}(0,\lambda-1)-f_{y}(\lambda-1,\eta T_{i}/2)]\\
\le f_{y}(\lambda-1,\eta T_{i}/2)-f_{y}(\lambda,\eta T_{i})\le\mathbf{P}[\Psi_{\mathrm{L},\lambda-1,\lambda}(\mathfrak{R}_{i}^{(\eta)})\le y].
\end{multline*}
In words, this means that the probability of a crossing of weight
at most $y$ from the left side of $\mathfrak{R}_{i}^{(\eta)}$ to
the \emph{point} with coordinates $(T_{i},\eta T_{i}/2+\lambda-1)$
is at least $p_{0}/8$. But then (using the \nameref{lem:FKG} and
\eqref{relatescale})
\begin{multline*}
\mathbf{P}[\Psi_{\mathrm{LR}}([0,2T_{i})\times[0,\eta T_{i}))\le3y]\\
\ge\mathbf{P}[\Psi_{\mathrm{L},\lambda-1,\lambda}(\mathfrak{R}_{i}^{(\eta)})\le y]\mathbf{P}[\Psi_{\lambda-1,\lambda,\mathrm{R}}(\mathfrak{R}_{i}^{(\eta)})\le y]-o(1)>(\tfrac{p_{0}}{8})^{2}-o(1),
\end{multline*}
so as long as $p_{1}\le(p_{0}/8)^{2}$, then statement~\ref{enu:very-close}
holds.

\emph{Case 2. }Now suppose $g_{w_{i}^{(\eta)},y}(\lambda)<3p_{0}/8$.
This means that we have
\begin{multline*}
p_{0}/2\le f_{w_{i}^{(\eta)}}(0,\lambda)+f_{w_{i}^{(\eta)}}(\lambda,\eta T_{i}/2)\le f_{w_{i}^{(\eta)}}(0,\lambda)+f_{y}(\lambda,\eta T_{i}/2)\\
\le g_{w_{i}^{(\eta)},y}(\lambda)+2f_{y}(\lambda,\eta T_{i}/2)\le3p_{0}/8+2f_{y}(\lambda,\eta T_{i}/2),
\end{multline*}
so $f_{y}(\lambda,\eta T_{i}/2)\ge p_{0}/16$. So as long as $p_{1}<p_{0}/16$,
statement~\ref{enu:above-expensive} holds.
\end{proof}
\begin{lem}
\label{lem:getthex}If statement~\enuref{above-expensive} of Lemma~\ref{lem:setuplambda}
holds, and $\gamma$ is sufficiently small, then there is a $p_{2}>0$,
depending only on $p_{1}$, so that the following holds. Let $y\ge w_{i}^{(\eta)}$.
Suppose that 
\begin{equation}
\eta-\frac{\lambda_{i}^{y}}{32T_{i}}<1.\label{eq:etasmall1}
\end{equation}
Then if 
\begin{align}
\mu & =\mu_{i}^{y}\in(\tfrac{1}{16}\lambda_{i}^{y},\tfrac{1}{8}\lambda_{i}^{y}),\text{ and}\label{eq:mu-def}\\
\nu & =\nu_{i}^{y}=2\lambda_{i}^{y}-\mu_{i}^{y},\label{eq:nu-def}
\end{align}
then
\[
\mathbf{P}[\Psi_{\mathrm{X};(\nu-\mu)/2}(\mathfrak{R}_{i}^{(\eta)})\le9y]\ge p_{2}.
\]
\end{lem}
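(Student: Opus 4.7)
My plan is to produce many simultaneous cheap crossings of $\mathfrak{R}_i^{(\eta)}$ by exploiting the reflective symmetries of the GFF together with FKG, and then to assemble them into an $X$-crossing by a planar-topology argument.

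First, the law of $Y_{\mathfrak{R}_i^{(\eta)}}$ is invariant under the horizontal and vertical midline reflections of $\mathfrak{R}_i^{(\eta)}$, so applying these to the event of statement~\enuref{above-expensive} of \lemref{setuplambda} gives three further events of probability $\geq p_1$: cheap paths (weight $\leq y$) from $L$ to lower-$R$, from upper-$L$ to $R$, and from lower-$L$ to $R$. All four events are decreasing in the weights $e^{\gamma Y(x)}$, hence increasing in $-Y$; since $-Y$ remains positively correlated, FKG yields that all four of these corner-to-side paths coexist with probability $\geq p_1^4$. In parallel, statement~\enuref{difference} of \lemref{setuplambda} and its three reflections give, each with probability $\geq p_0/4$, further cheap paths of weight $\leq w_i^{(\eta)} \leq y$ joining $L$ or $R$ to one of the four ``middle'' segments of the opposite side at heights $[\eta T_i/2 - \lambda, \eta T_i/2)$ or $[\eta T_i/2, \eta T_i/2 + \lambda)$. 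One more application of FKG makes all these events coexist with the four corner paths with probability at least $p_2 := (p_0/4)^4 p_1^4$, a constant depending only on $p_0$ and $p_1$.

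Next I would topologically assemble the $X$. Each corner path has one endpoint on one of the four $X$-target segments (upper-$L$, lower-$L$, upper-$R$, or lower-$R$, at level $\lambda-\mu$), since $(\nu-\mu)/2 = \lambda - \mu \in (7\lambda/8, 15\lambda/16) < \lambda$ means the corner paths' controlled endpoints (at distance $\geq \lambda$ from the midline) lie strictly inside the target segments. So the union of the four corner paths already touches all four $X$-target segments. The delicate point is connectivity: two disjoint Jordan arcs in $\mathfrak{R}_i^{(\eta)}$ with boundary endpoints cross in the interior if and only if their endpoints interleave cyclically on $\partial \mathfrak{R}_i^{(\eta)}$. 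My plan is a case analysis on where the free opposite-side endpoints of the corner paths land: in the ``interleaving'' configurations the four corner paths already form a connected $X$ by forced crossings, while in the remaining configurations one or more of the middle paths serves as a bridge gluing the otherwise-disjoint components together. The hypothesis $\eta - \lambda/(32T_i) < 1$, equivalent to $\lambda > 32(\eta-1) T_i = 32 \cdot 2^{-t_0} T_i$, forces $\lambda$ to be a constant fraction of $T_i$, so that the middle segments are a non-trivial portion of the side and the bridging has geometric room to operate.

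Summing weights, at most four corner paths of weight $\leq y$, four middle paths of weight $\leq w_i^{(\eta)} \leq y$, and at most one additional bridging piece give total weight at most $9y$, establishing $\Psi_{\mathrm{X};(\nu-\mu)/2}(\mathfrak{R}_i^{(\eta)}) \leq 9y$ on this event of probability $\geq p_2$. The hard part will be exactly the case analysis for connectivity: since the corner paths' opposite-side endpoints are not controlled by the hypothesis, different endpoint configurations require different bridging roles for the middle paths, and both the precise window $\mu \in (\lambda/16, \lambda/8)$ (providing slack between the corner-endpoint level $\lambda$ and the $X$-target level $\lambda-\mu$) and the constraint $\eta - \lambda/(32T_i) < 1$ (ensuring a substantial middle band) are precisely what make the case-by-case assembly go through.
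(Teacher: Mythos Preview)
Your approach has a genuine gap in the topological assembly step. The four ``corner paths'' together with the four ``middle paths'' need not form a connected set touching all four target segments. Consider the following configuration: the path $A$ from $L$ to the upper-$R$ segment is a horizontal line at some height $h_A\ge \eta T_i/2+\lambda$; the path $B$ from the upper-$L$ segment to $R$ is a horizontal line at a different height $h_B\ge \eta T_i/2+\lambda$; and $C,D$ are their vertical reflections in the lower part of the box. Then $A,B$ stay in the upper band and $C,D$ in the lower band, and no two of them cross. A middle path from $L$ to the segment $\{T_i-1\}\times[\eta T_i/2,\eta T_i/2+\lambda)$ can itself be a horizontal line at a height strictly between $\eta T_i/2$ and $\eta T_i/2+\lambda$, hence below $A,B$ and above $C,D$, crossing none of them. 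Since the uncontrolled endpoints of all eight paths are free to land anywhere on the opposite side, nothing in planar topology rules this picture out, and the case analysis you sketch cannot be completed. A related issue is that statement~\enuref{difference} of \lemref{setuplambda} is only asserted when $\lambda<\eta T_i/8$, which is not part of the hypothesis here.

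The paper's proof avoids this via a dichotomy you are missing. One first combines the statement-(b) path with its horizontal reflection to get, with probability $\ge p_1^2-o(1)$, a left--right crossing of weight $\le 3y$ whose \emph{both} endpoints lie in the upper segments. Then either (Case~1) this top-to-top path dips below height $\mu$ with probability $\ge p_1^2/2$, in which case overlaying it with a vertically flipped, upward-shifted copy \emph{forces} two intersections and produces the $X$; or (Case~2) with probability $\ge p_1^2/2$ the path stays inside $\tilde{\mathfrak{R}}_i^{(\eta)}:=[0,T_i)\times[\mu,\eta T_i)$. This is where hypothesis~\eqref{eq:etasmall1} actually enters: together with $\mu>\lambda/16$ it makes $\tilde{\mathfrak{R}}_i^{(\eta)}$ \emph{landscape}, so its LR crossing can be stretched (\corref{stretch}) into a BT crossing of the portrait box $\mathfrak{R}_i^{(\eta)}$, and this vertical spine glued with the top-to-top and bottom-to-bottom crossings gives the $X$. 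Your reading of~\eqref{eq:etasmall1} as forcing $\lambda$ to be a substantial fraction of $T_i$ misses its real role; since $\eta-1=2^{-t_0}$ with $t_0$ eventually large, the bound $\lambda>32(\eta-1)T_i$ is very weak, and~\eqref{eq:etasmall1} is used solely to flip the aspect ratio of $\tilde{\mathfrak{R}}_i^{(\eta)}$.
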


\begin{proof}
Note that 
\begin{multline}
\{\Psi_{[\eta T_{i}/2+\nu/2,\eta T_{i}),[\eta T_{i}/2+\nu/2,\eta T_{i})}(\mathfrak{R}_{i}^{(\eta)})\le2y\}\\
\supset\{\Psi_{\mathrm{L},\nu/2,\eta T_{i}/2}(\mathfrak{R}_{i}^{(\eta)})\le y\}\cap\{\Psi_{\nu/2,\eta T_{i}/2,\mathrm{R}}(\mathfrak{R}_{i}^{(\eta)})\le y\}.\label{eq:crosstop}
\end{multline}
By combining \eqref{crosstop}, the \nameref{lem:FKG}, \eqref{relatescale},
and statement~\ref{enu:above-expensive} of \lemref{setuplambda},
we have
\begin{align}
\mathbf{P}\mathrlap{\left[\Psi_{[\eta T_{i}/2+\nu/2,\eta T_{i}],[\eta T_{i}/2+\nu/2,\eta T_{i}]}(\mathfrak{R}_{i}^{(\eta)})\le3y\right]}\label{eq:crosstopprop}\\
 & \ge\mathbf{P}[\Psi_{\mathrm{L},\nu/2,\eta T_{i}/2}(\mathfrak{R}_{i}^{(\eta)})\le y]^{2}-o(1)\nonumber \\
 & \ge\mathbf{P}[\Psi_{\mathrm{L},\lambda,\eta T_{i}/2}(\mathfrak{R}_{i}^{(\eta)})\le y]^{2}-o(1)\nonumber \\
 & \ge p_{1}^{2}-o(1).\nonumber 
\end{align}
Let $\tilde{\mathfrak{R}}_{i}^{(\eta)}=[0,T_{i})\times[\mu,\eta T_{i})$.
By \eqref{etasmall1} and \eqref{mu-def}, $\tilde{\mathfrak{R}}_{i}^{(\eta)}$
is landscape. Let $E$ be the event that there is a left–right path
in $\mathfrak{R}_{i}^{(\eta)}$ connecting the intervals $[\eta T_{i}/2+\nu/2,\eta T_{i})$
on each side, of weight at most $3y$, that enters the box $\mathfrak{R}_{i}^{(\eta)}\setminus\tilde{\mathfrak{R}}_{i}^{(\eta)}$.
Then we have that
\begin{align*}
\mathbf{P}[\Psi_{\mathrm{L},\nu/2,\eta T_{i}/2}\mathrlap{(\tilde{\mathfrak{R}}_{i}^{(\eta)})\le3y]+\mathbf{P}[E]}\\
 & \ge\mathbf{P}[\Psi_{[\eta T_{i}/2+\nu/2,\eta T_{i}],[\eta T_{i}/2+\nu/2,\eta T_{i}]}(\mathfrak{R}_{i}^{(\eta)})\le3y]\\
 & \ge p_{1}^{2}-o(1),
\end{align*}
where for the second inequality we use \eqref{crosstopprop}. Thus,
either 
\begin{equation}
\mathbf{P}[\Psi_{\mathrm{L},\nu/2,\eta T_{i}/2}(\tilde{\mathfrak{R}}_{i}^{(\eta)})\le3y]\ge p_{1}^{2}/2-o(1)\label{eq:case1}
\end{equation}
or 
\begin{equation}
\mathbf{P}[E]\ge p_{1}^{2}/2-o(1).\label{eq:case2}
\end{equation}
We consider each case in turn.

\emph{Case 1. }First suppose that \eqref{case2} holds. Let $E_{1}=E$
and define $E_{2}$ to be a copy of $E$ which is vertically flipped
and translated upwards by $\mu$. Then the intersection of $E_{1}$
and $E_{2}$ is contained in $\{\Psi_{\mathrm{X};(\nu-\mu)/2}(\mathfrak{R}_{i}^{(\eta)})\le6y\}$.
This is because the path in $E_{2}$ must cross the path from $E_{1}$
once on its way from $0\times[\mu,\mu+\eta T_{i}/2-\nu/2)$ to $(\mathfrak{R}_{i}^{(\eta)}+(0,\mu))\setminus\mathfrak{R}_{i}^{(\eta)}$,
and another time on its way from $(\mathfrak{R}_{i}^{(\eta)}+(0,\mu))\setminus\mathfrak{R}_{i}^{(\eta)}$
to $\{T_{i}-1\}\times[\mu,\mu+\eta T_{i}/2-\nu/2)$. (See \figref{Ebig}.)
Thus we have
\[
\mathbf{P}[\Psi_{\mathrm{X};(\nu-\mu)/2}(\mathfrak{R}_{i}^{(\eta)})\le6y]\ge(p_{1}^{2}/2)^{2}-o(1)
\]
by the \nameref{lem:FKG}. This proves the lemma in this case, as
long as $p_{2}\le p_{1}^{4}/4-o(1)$.
\begin{figure}[t]
\begin{centering}
\tiny
\begin{tikzpicture}[x=0.75in,y=0.75in]
\dimline[extension start length = 0.55*0.25in, extension end length = 0.55*0.25in]{(3,-0.25)}{(0,-0.25)}{$T_i$}
\dimline[extension start length = 0.55*0.5in, extension end length = 0.55*0.5in]{(-0.5,0)}{(-0.5,4)}{$\eta T_i$}
\dimline[extension start length = 0.55*0.25in, extension end length = 0.55*0.25in]{(-0.25,2)}{(-0.25,3.1)}{$\lambda$}
\dimline[extension start length = 0.55*0.25in, extension end length = 0.55*0.25in]{(-0.25,0)}{(-0.25,2)}{$\tfrac{\eta T_i}{2}$}
\dimline[extension start length = 0.55*0.25in, extension end length = 0.55*0.25in]{(3.25,1)}{(3.25,0)}{$\mu$}
\dimline[extension start length = 0.55*0.25in, extension end length = 0.55*0.25in]{(3.25,3.1)}{(3.25,1.9)}{$\nu$}
\dimline[extension start length = 0.55*0.5in, extension end length = 0.55*0.5in,label style={above=1pt},line style={arrows = dimline reverse-dimline reverse}]{(3.5,2)}{(3.5,1.9)}{$\tfrac{\nu-\mu}{2}$}
\draw [red] (0,0) -- (3,0) -- (3,4) -- (0,4) -- cycle;
\draw [blue] (0,1) -- (3,1) -- (3,5) -- (0,5) -- cycle;
\draw [blue, ultra thick] (0,1) -- (0,1.9);
\draw [blue, ultra thick] (3,1) -- (3,1.9);
\draw [red, ultra thick] (0,3.1) -- (0,4);
\draw [red, ultra thick] (3,3.1) -- (3,4);
\draw [red] (-0.125,2) -- (0.125,2);
\draw [blue] (2.875,3) -- (3.125,3);
\draw [red] (2.875,2) -- (3.125,2);
\draw [thin,blue] plot [smooth] coordinates { (0,1.25)  (1,4.38)  (1.7,4.69)  (3,1.09)};
\draw [thin,red] plot [smooth] coordinates { (0,3.8) (0.7,0.72) (3,3.3) };
\draw [opacity=0] (-1.5,0)--(4.5,0);
\end{tikzpicture}
\par\end{centering}
\caption{Setting up for the \nameref{lem:FKG} when $\mathbf{P}[E]\ge p_{1}^{2}/2$.
Combining the red crossing with the lower pieces of the blue crossing
gives an ``X'' shape inside the red $T_{i}\times\eta T_{i}$ box
with endpoints at least distance $(\nu-\mu)/2$ from the midline.
In this and future figures in this section, the origin $(0,0)$ is
at the bottom left.\label{fig:Ebig}}
\end{figure}
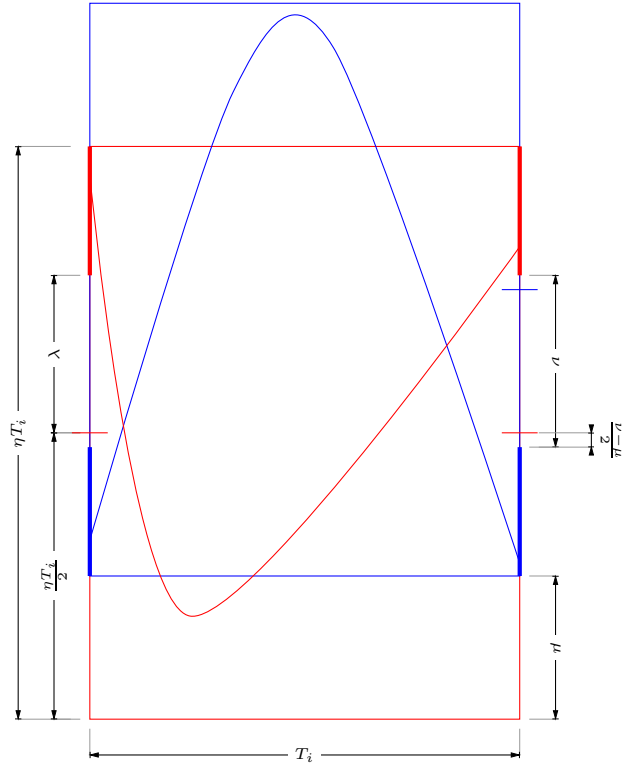
\begin{figure}[t]
\begin{centering}
\tiny
\begin{tikzpicture}[x=0.45in,y=0.45in]
\dimline[extension start length = 0.55*0.25in, extension end length = 0.55*0.25in]{(3,-0.25)}{(0,-0.25)}{$T_i$}
\dimline[extension start length = 0.55*0.25in, extension end length = 0.55*0.25in]{(3.25,2.7)}{(3.25,2)}{$\frac{\nu}{2}$}
\dimline[extension start length = 0.55*0.25in, extension end length = 0.55*0.25in]{(3.25,2)}{(3.25,1.3)}{$\frac{\nu}{2}$}
\dimline[extension start length = 0.55*0.25in, extension end length = 0.55*0.25in]{(-0.25,0)}{(-0.25,2)}{$\tfrac{\eta T_i}{2}$}
\dimline[extension start length = 0.55*0.25in, extension end length = 0.55*0.25in]{(-0.25,2)}{(-0.25,4)}{$\tfrac{\eta T_i}{2}$}
\draw [black] (0,0) -- (3,0) -- (3,4) -- (0,4) -- cycle;
\draw [black, ultra thick] (0,2.7) -- (0,4);
\draw [black, ultra thick] (3,2.7) -- (3,4);
\draw [black, ultra thick] (0,0) -- (0,1.3);
\draw [black, ultra thick] (3,0) -- (3,1.3);
\draw [black] (-0.125,2) -- (0.125,2);
\draw [black] (2.875,2) -- (3.125,2);
\draw [thin,red] plot [smooth] coordinates { (0,3.8) (1.4,2.5) (3,2.9)};
\draw [thin,red] plot [smooth] coordinates { (0,0.8) (0.4,1.4) (3,0.1)};
\draw [thin,blue] plot [smooth] coordinates { (0.3,0) (1.2,0.7) (1.8,2.7) (0.3,3.8) (0.5,4) };
\draw [opacity=0] (-1.5,0)--(4.5,0);
\end{tikzpicture}
\par\end{centering}
\caption{Setting up for the \nameref{lem:FKG} when $\mathbf{P}[\Psi_{\mathrm{L},\nu/2,\eta T_{i}/2}(\tilde{\mathfrak{R}}_{i}^{(\eta)})\le y]\ge p_{1}^{2}/2$.
Combining a piece of the blue vertical crossing with the red horizontal
crossings gives an ``X'' shape inside the $T_{i}\times\eta T_{i}$
box with endpoints at least distance $\lambda\ge(\nu-\mu)/2$ from
the midline.\label{fig:Esmall}}
\end{figure}
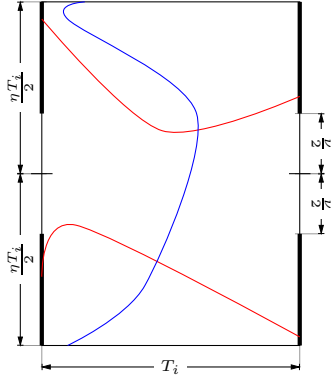

\emph{Case 2. }We are left with the case when \eqref{case1} holds,
which in particular means that
\begin{equation}
\mathbf{P}[\Psi_{\mathrm{LR}}(\tilde{\mathfrak{R}}_{i}^{(\eta)})\le3y]\ge p_{1}^{2}/2-o(1).\label{eq:crosshard1}
\end{equation}
Observe that the event $\{\Psi_{\mathrm{X};\nu/2}(\mathfrak{R}_{i}^{(\eta)})\le9y\}$
contains the intersection of the three events
\begin{align*}
\{\Psi_{[\eta T_{i}/2+\nu/2,\eta T_{i}),[\eta T_{i}/2+\nu/2,\eta T_{i})}(\mathfrak{R}_{i}^{(\eta)}) & \le3y\},\\
\{\Psi_{[0,\eta T_{i}/2-\nu/2),[0,\eta T_{i}/2-\nu/2)}(\mathfrak{R}_{i}^{(\eta)}) & \le3y\},\text{ and}\\
\{\Psi_{\mathrm{BT}}(\mathfrak{R}_{i}^{(\eta)}) & \le3y\}
\end{align*}
(see \figref{Esmall}), so
\begin{align}
\mathbf{P}[\mathrlap{\Psi_{\mathrm{X};\nu/2}(\mathfrak{R}_{i}^{(\eta)})\le9y]}\label{eq:X1}\\
 & \ge\mathbf{P}[\Psi_{[\eta T_{i}/2+\nu/2,\eta T_{i}),[\eta T_{i}/2+\nu/2,\eta T_{i})}(\mathfrak{R}_{i}^{(\eta)})\le3y]^{2}\mathbf{P}[\Psi_{\mathrm{BT}}(\mathfrak{R}_{i}^{(\eta)})\le3y]\nonumber \\
 & \ge p_{1}^{4}\mathbf{P}[\Psi_{\mathrm{BT}}(\mathfrak{R}_{i}^{(\eta)})\le3y]\nonumber 
\end{align}
by symmetry, \eqref{crosstopprop}, and the \nameref{lem:FKG}. Now
by \eqref{etasmall1} and the definition of $\mu$, we have $\mu/2\ge\lambda_{i}^{y}/32>(\eta-1)T_{i},$
so $2T_{i}-\eta T_{i}+\mu>\eta T_{i}.$ Hence, by \corrref{stretch}
applied with $k=2$ (recalling that $\tilde{\mathfrak{R}}_{i}^{(\eta)}$
is landscape), we have
\[
\mathbf{P}[\Psi_{\mathrm{BT}}(\mathfrak{R}_{i}^{(\eta)})\le3y]\ge\mathbf{P}[\Psi_{\mathrm{LR}}(\tilde{\mathfrak{R}}_{i}^{(\eta)})\le y]^{3}-o(1)\ge p_{1}^{6}/8-o(1),
\]
where the second inequality is by \eqref{crosshard1}. Combining this
last inequality with~\eqref{X1}, we obtain
\[
\mathbf{P}[\Psi_{\mathrm{X};(\nu-\mu)/2}(\tilde{\mathfrak{R}}_{i}^{(\eta)})\le9y]\ge\mathbf{P}[\Psi_{\mathrm{X};\nu/2}(\tilde{\mathfrak{R}}_{i}^{(\eta)})\le9y]\ge p_{1}^{10}/8-o(1),
\]
completing the proof of the lemma in this case, as long as $p_{2}\le p_{1}^{10}/8-o(1)$\@.
\end{proof}
\begin{lem}
\label{lem:usethex}There is a $p_{3}$, depending only on $p_{1}$
and $p_{2}$, so that if $\gamma$ is sufficiently small compared
to $p_{1}$ and $p_{2}$ then the following statement holds. Suppose
that $y\ge w_{i}^{(\eta)}$, $\eta<\frac{256}{255}$, and $z\ge0$
are such that either
\begin{enumerate}
\item \label{enu:nontrivial}$z\ge w_{i-1}^{(\eta)}$ and $\lambda_{i}^{y}\le\frac{7}{4}\lambda_{i-1}^{z}$
and $\eta-\frac{\lambda_{i-1}^{z}}{32T_{i-1}}<1$ (in which the third
inequality says that \eqref{etasmall1} holds at scale $i-1$ with
weight $z$), or
\item $\lambda_{i}^{y}=\eta T_{i}/8.$
\end{enumerate}
Then
\[
\mathbf{P}[\Psi_{\mathrm{LR}}([0,5T_{i}/4)\times[0,\eta T_{i}))\le55y+11z]\ge p_{3}.
\]
\end{lem}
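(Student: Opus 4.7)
The plan is to construct, with probability at least $p_3$, a left--right crossing of $[0, 5T_i/4) \times [0, \eta T_i)$ of total weight at most $55y + 8z$ by stitching together X-shapes produced by \lemref{getthex} at scales $i$ and $i-1$, supplemented by easy crossings of scale-$i$ or scale-$(i-1)$ sub-rectangles that are made cheap by the assumptions $y \ge w_i^{(\eta)}$ and $z \ge w_{i-1}^{(\eta)}$. The desired probability lower bound will then follow from the FKG inequality, applied across the positively correlated Gaussian fields assigned to the different sub-boxes via \remref{notationabuse} and \eqref{relatescale}.

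First I would apply \lemref{getthex} to obtain the X-shape(s). In Case 1 the third hypothesis $\eta - \lambda_{i-1}^z/(32T_{i-1}) < 1$ is exactly \eqref{etasmall1} at scale $i-1$ with weight $z$, so \lemref{getthex} yields, with probability $\ge p_2$, an X in a translated copy of $\mathfrak{R}_{i-1}^{(\eta)}$ of weight $\le 9z$ whose four arms reach distance $\approx \lambda_{i-1}^z$ from the vertical midline. In Case 2, $\lambda_i^y = \eta T_i/8$ together with $\eta < 256/255$ forces \eqref{etasmall1} at scale $i$ (since $\eta - \lambda_i^y/(32T_i) = \eta - \eta/256 < 1$), so \lemref{getthex} gives an X in $\mathfrak{R}_i^{(\eta)}$ of weight $\le 9y$ with arms reaching $\approx \eta T_i/8$. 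Second, I would place these X-shapes inside the target rectangle: the scale-$i$ X (or two overlapping translates of it) covers most of the width-$5T_i/4$ strip, and the scale-$(i-1)$ X (in Case 1) is translated into the residual strip near one end. The crucial geometric point is that by the hypothesis $\lambda_i^y \le (7/4)\lambda_{i-1}^z$ and the bounds $T_{i-1} \in [2T_i/3,\,3T_i/4]$, the scale-$(i-1)$ X reaches vertically at least as far as the midline strip that a single scale-$i$ X leaves unserved, so its arms can be matched up (height-wise) with the arms of the scale-$i$ X. Third, I would bridge any remaining short gaps with a bounded number of easy crossings of sub-rectangles of aspect ratio between $1/2$ and $2$; each such crossing has weight $\le y$ (scale $i$) or $\le z$ (scale $i-1$) with probability $\ge p_0$ by the definitions of $w_i^{(\eta)}$ and $w_{i-1}^{(\eta)}$. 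Finally, I would apply FKG to the intersection of all these (increasing, positively correlated) events, losing a vanishing factor from \eqref{relatescale}, and sum the weight contributions to get $\le 55y + 8z$.

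The main obstacle will be the geometric and combinatorial bookkeeping. One must verify that the X-shapes at the two scales, together with the $O(1)$ connecting easy-crossing rectangles, actually fit inside the narrow target of width $5T_i/4$ (which is only modestly wider than a single scale-$i$ box), using the numerical assumption $\eta < 256/255$ and the scale ratios at every step; and one must track the exact numerical coefficients $55$ and $8$ through the accounting (each X contributes $9y$ or $9z$, and each connector contributes $y$ or $z$, so the constants come from counting how many such pieces enter the construction). The qualitative structure mirrors the corresponding step in \cite{tassion}, but the presence of FPP weights demands careful summation and invocation of \eqref{relatescale} at each gluing, since we cannot freely replace the field on a sub-box by an independent copy.
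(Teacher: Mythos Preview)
Your high-level plan (X-shapes from \lemref{getthex}, glue via FKG) is on the right track, but the proposal has two genuine gaps that would prevent the argument from going through.

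First, and most seriously, in your Case~1 (assumption~\enuref{nontrivial}) you speak of ``the scale-$i$ X (or two overlapping translates of it)'' covering most of the strip. But no scale-$i$ X is available here: \lemref{getthex} requires both that statement~\enuref{above-expensive} of \lemref{setuplambda} holds and that \eqref{etasmall1} holds at the scale in question, and when $\lambda_i^y$ is small (which is exactly the situation in assumption~\enuref{nontrivial}) the inequality $\eta-\lambda_i^y/(32T_i)<1$ can fail. The paper therefore does \emph{not} use a scale-$i$ X in this case. Instead it uses two overlapping copies $\mathfrak{R}^1,\mathfrak{R}^2$ of $\mathfrak{R}_i^{(\eta)}$, offset by $T_i-T_{i-1}$, and in each takes the \emph{specific} crossing $\Psi_{\mathrm{L},0,\lambda_i^y}\le y$ landing in the short interval of length $\lambda_i^y$; the probability of this event comes from statement~\enuref{difference} of \lemref{setuplambda}, not from the definition of $w_i^{(\eta)}$. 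A single scale-$(i-1)$ X is then placed in $\mathfrak{R}^1\cap\mathfrak{R}^2$ so that its arms straddle those intervals (this is where $\lambda_i^y\le\tfrac74\lambda_{i-1}^z\le\nu_{i-1}^z-\mu_{i-1}^z$ is used), forcing all three pieces to intersect. Your ``easy crossings'' would land at arbitrary heights and need not meet the X.

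Second, in your Case~2 ($\lambda_i^y=\eta T_i/8$) you do have a scale-$i$ X, but a single X lives in a box of width $T_i$, and you give no mechanism for producing a crossing of the wider box $[0,5T_i/4)\times[0,\eta T_i)$. The paper's device here is different from what you sketch: since the X-arms reach at least $(\nu_i^y-\mu_i^y)/2\ge\tfrac{7}{32}\eta T_i$ from the midline, one stacks \emph{six vertically translated} X's to build a bottom--top crossing of $[0,T_i)\times[0,\tfrac{21}{16}\eta T_i)$, which (after noting $\tfrac{21}{16}\eta T_i>\tfrac54 T_i$) yields the required left--right crossing. Horizontal overlapping of two X's, as you suggest, does not obviously force them to intersect.

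Finally, you omit the preliminary reduction: \lemref{getthex} has as a hypothesis that statement~\enuref{above-expensive} of \lemref{setuplambda} holds, so before invoking it at either scale you must first dispose of the alternative~\enuref{very-close}, which already gives a crossing of $[0,2T_i)\times[0,\eta T_i)$ (hence of the smaller target) with probability $\ge p_1$.
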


\begin{proof}
If
\[
\mathbf{P}[\Psi_{\mathrm{LR}}([0,2T_{i})\times[0,\eta T_{i}))\le3y]\ge p_{1},
\]
(i.e. if statement~\enuref{very-close} from \lemref{setuplambda}
holds) then there is nothing more to show as long as $p_{3}\le p_{1}-o(1)$,
since horizontally crossing a $T_{i}\times\eta T_{i}$ box implies
horizontally crossing a $\tfrac{5}{4}T_{i}\times\eta T_{i}$ box.
Similarly, if 
\[
\mathbf{P}[\Psi_{\mathrm{LR}}([0,2T_{i-1})\times[0,\eta T_{i-1}))\le3z]\ge p_{1},
\]
then we have (using \eqref{Ti-def}) 
\begin{align*}
\mathbf{P}[\Psi_{\mathrm{LR}}\mathrlap{([0,5T_{i}/4)\times[0,\eta T_{i}))\le4z]}\\
 & \ge\mathbf{P}[\Psi_{\mathrm{LR}}([0,2T_{i-1})\times[0,\eta T_{i-1}))\le3z]-o(1)\\
 & \ge p_{1}-o(1),
\end{align*}
so there is nothing more to show as long as long as $p_{3}\le p_{1}-o(1)$.

Thus we may henceforth assume that statement~\enuref{above-expensive}
from \lemref{setuplambda} holds for both $i$ (with weight $y=y$)
and $i-1$ (with weight $y=z$). The rest of the proof is divided
into two cases.
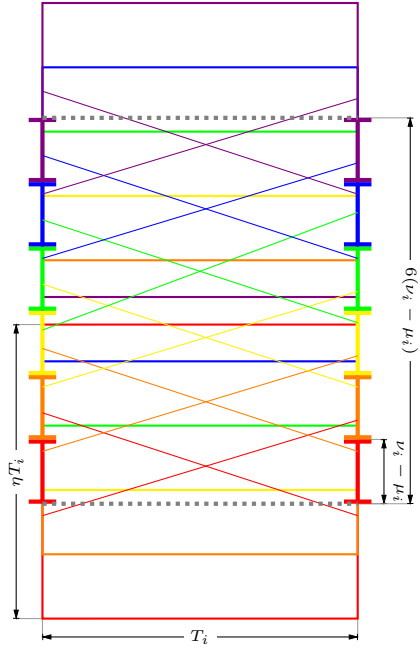
\begin{figure}[t]
\begin{centering}
\tiny
\begin{tikzpicture}[x=0.5*1.1in,y=0.35*1.1in]
\dimline[extension start length=1.1*0.125in,extension end length=1.1*0.125in]{(3,-0.25)}{(0,-0.25)}{$T_i$}
\dimline[extension start length=1.1*0.5*0.25in,extension end length=1.1*0.5*0.25in,label style={above=1pt}]{(3.25,2+0.4375)}{(3.25,2-0.4375)}{$\nu_i-\mu_i$}
\dimline[extension start length=1.1*0.5*0.5in,extension end length=1.1*0.5*0.5in]{(3.5,2+5*0.875+0.4375)}{(3.5,2-0.4375)}{$6(\nu_i-\mu_i)$}
\dimline{(-0.25,0)}{(-0.25,4)}{$\eta T_i$}
\draw [red,thick] (0,0) -- (3,0) -- (3,4) -- (0,4) -- cycle;
\draw [orange,thick] (0,0.875*1) -- (3,0.875*1) -- (3,4+0.875*1) -- (0,4+0.875*1) -- cycle;
\draw [yellow,thick] (0,0.875*2) -- (3,0.875*2) -- (3,4+0.875*2) -- (0,4+0.875*2) -- cycle;
\draw [green,thick] (0,0.875*3) -- (3,0.875*3) -- (3,4+0.875*3) -- (0,4+0.875*3) -- cycle;
\draw [blue,thick] (0,0.875*4) -- (3,0.875*4) -- (3,4+0.875*4) -- (0,4+0.875*4) -- cycle;
\draw [violet,thick] (0,0.875*5) -- (3,0.875*5) -- (3,4+0.875*5) -- (0,4+0.875*5) -- cycle;
\draw [|-|,red, ultra thick] (0,2-0.4375) -- (0,2+0.4375);
\draw [|-|,orange, ultra thick] (0,2+0.875*1-0.4375) -- (0,2+0.875*1+0.4375);
\draw [|-|,yellow, ultra thick] (0,2+0.875*2-0.4375) -- (0,2+0.875*2+0.4375);
\draw [|-|,green, ultra thick] (0,2+0.875*3-0.4375) -- (0,2+0.875*3+0.4375);
\draw [|-|,blue, ultra thick] (0,2+0.875*4-0.4375) -- (0,2+0.875*4+0.4375);
\draw [|-|,violet, ultra thick] (0,2+0.875*5-0.4375) -- (0,2+0.875*5+0.4375);
\draw [|-|,red, ultra thick] (3,2-0.4375) -- (3,2+0.4375);
\draw [|-|,orange, ultra thick] (3,2+0.875*1-0.4375) -- (3,2+0.875*1+0.4375);
\draw [|-|,yellow, ultra thick] (3,2+0.875*2-0.4375) -- (3,2+0.875*2+0.4375);
\draw [|-|,green, ultra thick] (3,2+0.875*3-0.4375) -- (3,2+0.875*3+0.4375);
\draw [|-|,blue, ultra thick] (3,2+0.875*4-0.4375) -- (3,2+0.875*4+0.4375);
\draw [|-|,violet, ultra thick] (3,2+0.875*5-0.4375) -- (3,2+0.875*5+0.4375);
\draw [thin,red] plot [smooth] coordinates { (0,1.4)  (3,2.7) };
\draw [thin,red] plot [smooth] coordinates {(0,2.8)  (3,1.4) };
\draw [thin,orange] plot [smooth] coordinates { (0,1.4+0.875*1)  (3,2.7+0.875*1) };
\draw [thin,orange] plot [smooth] coordinates { (0,2.8+0.875*1) (3,1.4+0.875*1) };
\draw [thin,yellow] plot [smooth] coordinates {(0,1.4+0.875*2) (3,2.7+0.875*2)};
\draw [thin,yellow] plot [smooth] coordinates { (0,2.8+0.875*2)  (3,1.4+0.875*2) };
\draw [thin,green] plot [smooth] coordinates { (0,1.3+0.875*3) (3,2.9+0.875*3) };
\draw [thin,green] plot [smooth] coordinates { (0,2.8+0.875*3)  (3,1.4+0.875*3) };
\draw [thin,blue] plot [smooth] coordinates { (0,1.4+0.875*4) (3,2.7+0.875*4) };
\draw [thin,blue] plot [smooth] coordinates { (0,2.8+0.875*4)  (3,1.4+0.875*4) };
\draw [thin,violet] plot [smooth] coordinates { (0,1.4+0.875*5)  (3,2.7+0.875*5) } ;
\draw [thin,violet] plot [smooth] coordinates { (0,2.8+0.875*5) (3,1.4+0.875*5) };
\draw [gray, dotted, ultra thick] (0,2-0.4375)--(3,2-0.4375);
\draw [gray, dotted, ultra thick] (0,2+0.875*5+0.4375)--(3,2+0.875*5+0.4375);
\end{tikzpicture}
\par\end{centering}
\caption{\label{fig:stackxs}A vertical crossing between the two dotted lines
is obtained by combining the ``X'' shapes, which must cross because
their endpoints must straddle the interval of their color.}
\end{figure}
\begin{figure}[t]
\begin{centering}
\tiny
\begin{tikzpicture}[x=0.8in,y=0.8in]
\dimline{(3,-0.5)}{(0,-0.5)}{$T_i$}
\dimline{(3,-0.25)}{(1,-0.25)}{$T_{i-1}$}
\dimline[extension start length=0.2in,extension end length=0.2in]{(3.25,2.7)}{(3.25,2)}{$\lambda_i$}
\dimline[extension start length=0.2in,extension end length=0.2in]{(-0.25,0)}{(-0.25,2)}{$\frac{\eta T_i}{2}$}
\dimline[extension start length=0.2in,extension end length=0.2in]{(-0.25,2)}{(-0.25,4)}{$\frac{\eta T_i}{2}$}
\dimline[extension start length=0.2in,extension end length=0.2in]{(0.75,2)}{(0.75,3)}{$\nu_{i-1}-\mu_{i-1}$}
\dimline{(0.5,2.5-4/3)}{(0.5,2.5+4/3)}{$\eta T_{i-1}$}
\draw [red,thick] (0,0) -- (3,0) -- (3,4) -- (0,4) -- cycle;
\draw [blue,thick] (1,0) -- (4,0) -- (4,4) -- (1,4) -- cycle;
\draw [green] (1,2.5-4/3) -- (1,2.5+4/3) -- (3,2.5+4/3) -- (3,2.5-4/3) -- cycle; 
\draw [green,thick] (0.875,2.5) -- (1.125,2.5);
\draw [green, thick] (2.875,2.5) -- (3.125,2.5);
\draw [green,ultra thick] (1,2) -- (1,3);
\draw [green,ultra thick] (3,2) -- (3,3);
\draw [red, very thick] (3,2) -- (3,2.7);
\draw [blue, very thick] (1,2) -- (1,2.7);
\draw [gray, dotted] (0,2)--(4,2);
\draw [thin,red] plot [smooth] coordinates { (0,1.9) (1.8,0.6) (3,2.4) };
\draw [thin,blue] plot [smooth] coordinates { (4,0.1) (3.7,3.9) (2.3,1.9) (1.4,3.7) (1,2.1) };
\draw [thin,green] plot [smooth] coordinates { (1,1.4) (1.4,2.8) (1,3.4) };
\draw [thin,green] (1.4,2.8) -- (2,3.6);
\draw [thin,green] plot [smooth] coordinates { (3,1.3) (2,3.6) (3,3.65) };
\end{tikzpicture}
\par\end{centering}
\caption{\label{fig:joinstrat}The red and blue crossings are guaranteed to
be joined by the green ``X'' shape, since the red and blue crossings
must remain within the red and blue boxes and end on the thick red
and blue lines, respectively, while the green ``X'' must have endpoints
off of the thick green lines, which contain the thick red and blue
lines.}
\end{figure}
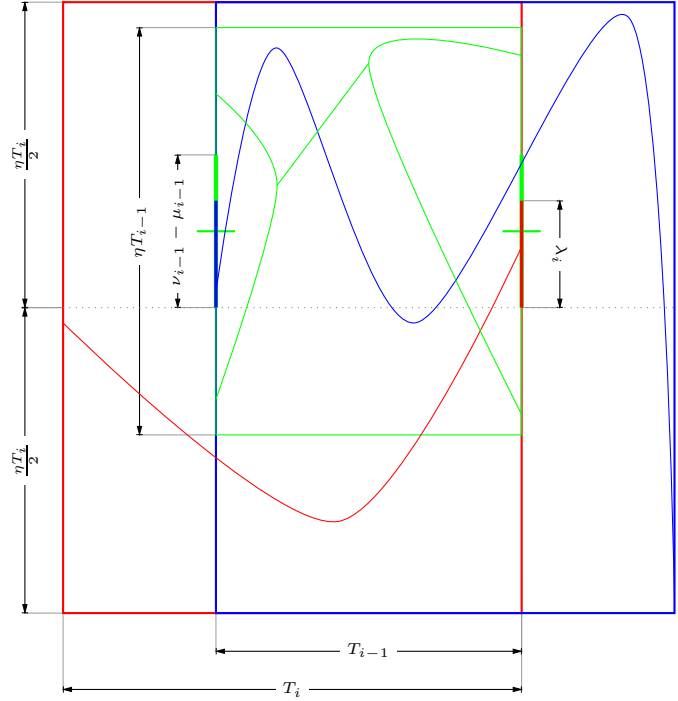

\emph{Case 1. }If $\lambda_{i}^{y}=\eta T_{i}/8$, then $\eta-\frac{1}{32T_{i}}\lambda_{i}^{y}=\frac{255}{256}\eta<1$,
so \lemref{getthex} implies that
\[
\mathbf{P}[\Psi_{\mathrm{X};(\nu_{i}^{y}-\mu_{i}^{y})/2}(\mathfrak{R}_{i}^{(\eta)})\le9y]\ge p_{2}.
\]
Since $\nu_{i}^{y}-\mu_{i}^{y}=2(\lambda_{i}^{y}-\mu_{i}^{y})\ge\frac{7}{4}\lambda_{i}^{y}=\frac{7}{32}\eta T_{i}$
(recalling \eqref{mu-def} and \eqref{nu-def}), the intersection
of six vertically-translated copies of the event 
\[
\{\Psi_{\mathrm{X};(\nu_{i}^{y}-\mu_{i}^{y})/2}(\mathfrak{R}_{i}^{(\eta)})\le9y\}
\]
contains a translate of the event 
\[
\{\Psi_{\mathrm{BT}}([0,T_{i})\times[0,\frac{21}{16}\eta T_{i}))\le54y\}
\]
(as illustrated in \figref{stackxs}), and so also contains a translate
of the event 
\[
\{\Psi_{\mathrm{BT}}([0,T_{i})\times[0,\frac{5}{4}T_{i}))\le54y+7z\}.
\]
So, by the \nameref{lem:FKG} and \eqref{relatescale},
\[
\mathbf{P}[\Psi_{\mathrm{BT}}([0,T_{i})\times[0,\frac{5}{4}T_{i}))\le55y+8z]\ge p_{2}^{6}-o(1).
\]
This completes the proof of the lemma in the case when $\lambda_{i}^{y}=\eta T_{i}/8$,
as long as $p_{3}\le p_{2}^{6}-o(1)$.

\emph{Case 2. }Thus we can assume that $\lambda_{i}^{y}<\frac{\eta T_{i}}{8}$,
so assumption~\enuref{nontrivial} holds, which is to say that $z\ge w_{i-1}^{(\eta)}$
and $\lambda_{i}^{y}\le\frac{7}{4}\lambda_{i-1}^{z}$ and \eqref{etasmall1}
holds at scale $i-1$ with weight $z$. %
Now consider $\mathfrak{R}^{1}=\mathfrak{R}_{i}^{(\eta)}$ and $\mathfrak{R}^{2}=\mathfrak{R}_{i}^{(\eta)}+(T_{i}-T_{i-1},0)$,
and 
\[
\tilde{\mathfrak{R}}=\mathfrak{R}_{i-1}^{(\eta)}+\left(T_{i}-T_{i-1},\frac{1}{2}(\eta T_{i}-\eta T_{i-1}+\nu_{i-1}-\mu_{i-1})\right).
\]
Note that, since
\begin{multline*}
\frac{\eta T_{i}-\eta T_{i-1}+\nu_{i-1}-\mu_{i-1}}{2}+\eta T_{i-1}\le\frac{\eta T_{i}+\eta T_{i-1}}{2}+\lambda_{i-1}\\
\le\frac{\eta}{2}(T_{i}+T_{i-1})+\frac{\eta}{8}T_{i-1}\le\frac{\eta}{2}(T_{i}+\frac{3}{4}T_{i})+\frac{3}{32}\eta T_{i}<\eta T_{i},
\end{multline*}
we have $\tilde{\mathfrak{R}}\subset\mathfrak{R}^{1}\cap\mathfrak{R}_{}^{2}.$
Since $\nu_{i-1}^{z}-\mu_{i-1}^{z}=2(\lambda_{i-1}^{z}-\mu_{i-1}^{z})\ge\frac{7}{4}\lambda_{i-1}^{z}\ge\lambda_{i}^{y}$,
the event
\[
\{\Psi_{\mathrm{X};(\nu_{i-1}^{z}-\mu_{i-1}^{z})/2}(\tilde{\mathfrak{R}})\le9z\}\cap\{\Psi_{\mathrm{L},0,\lambda_{i}^{y}}(\mathfrak{R}^{1})\le y\}\cap\{\Psi_{0,\lambda_{i}^{y},\mathrm{R}}(\mathfrak{R}^{2})\le y\}
\]
is contained in, up to coarse field error (i.e. the error bounded
in \eqref{relatescale}), the event
\[
\{\Psi_{\mathrm{LR}}([0,2T_{i}-T_{i-1})\times[0,\eta T_{i}))\le2y+9z\},
\]
since the crossings in the two larger rectangles must both intersect
the ``X'' shape in the smaller rectangle, as they both must end
on an interval that is contained in an interval that must be straddled
by the endpoints of the ``X''. (See \figref{joinstrat}.) Hence,
by the \nameref{lem:FKG} and \eqref{relatescale}, we have
\begin{align*}
\mathbf{P}\mathrlap{[\Psi_{\mathrm{LR}}([0,2T_{i}-T_{i-1})\times[0,\eta T_{i}))\le3y+10z]}\\
 & \ge\mathbf{P}[\Psi_{\mathrm{X};(\nu_{i-1}^{z}-\mu_{i-1}^{z})/2}(\tilde{\mathfrak{R}})\le9z]\cdot\mathbf{P}[\Psi_{\mathrm{L},0,\lambda_{i}^{y}}(\mathfrak{R}^{1})\le y]^{2}-o(1)\\
 & \ge p_{1}^{2}\mathbf{P}[\Psi_{\mathrm{X};(\nu_{i-1}^{z}-\mu_{i-1}^{z})/2}(\tilde{\mathfrak{R}})\le9z]-o(1).
\end{align*}
Now by \lemref{getthex}, recalling our assumption that \eqref{etasmall1}
holds at scale $i-1$ with weight $z$, if $\gamma$ is sufficiently
small compared to $p_{2}$, we have
\[
\mathbf{P}[\Psi_{\mathrm{X};(\nu_{i-1}^{z}-\mu_{i-1}^{z})/2}(\tilde{\mathfrak{R}})\le9z]\ge p_{2}.
\]
So
\begin{align*}
\mathbf{P}\mathrlap{\left[\Psi_{\mathrm{LR}}([0,5T_{i}/4)\times[0,\eta T_{i}))\le4y+11z\right]}\\
 & \ge\mathbf{P}\left[\Psi_{\mathrm{LR}}([0,2T_{i}-T_{i-1})\times[0,\eta T_{i}))\le3y+10z\right]-o(1)\\
 & \ge p_{1}^{2}p_{2}/2-o(1),
\end{align*}
completing the proof of the lemma in the case when $\lambda_{i}^{y}<\eta T_{i}/8$,
as long as $p_{3}\le\frac{1}{2}p_{1}^{2}p_{2}-o(1)$.
\end{proof}
\begin{lem}
\label{lem:ring}There are constants $c_{1}$ and $p_{4}$, depending
only on $p_{3}$, so that the following holds. Let $j\ge i+8$. Suppose
that $\eta\le9/8$, $\lambda_{j}^{w_{j}^{(\eta)}}\le\eta T_{i}$,
$\gamma$ is sufficiently small compared to $p_{3}$, and
\begin{equation}
\mathbf{P}\left[\Psi_{\mathrm{LR}}([0,5T_{i}/4)\times[0,\eta T_{i}))\le y\right]\ge p_{3}.\label{eq:startcross}
\end{equation}
Then 
\[
\mathbf{P}[\Psi_{\mathrm{LR}}([0,2T_{j})\times[0,\eta T_{j}))\le2w_{j}^{(\eta)}+c_{1}y]\ge p_{4}-o_{j-i}(1).
\]
\end{lem}

\begin{proof}
Since $j\ge i+8$, we have $T_{j}\ge16T_{i}$, so $\lambda_{j}^{y}\le\eta T_{i}<\eta T_{j}/8$,
so by statement~\ref{enu:difference} of \lemref{setuplambda}, we
have
\begin{equation}
\mathbf{P}\left[\Psi_{\mathrm{L},0,\lambda_{j}^{w_{j}^{(\eta)}}}(\mathfrak{R}_{j}^{(\eta)})\le w_{i}^{(\eta)}\right]\ge p_{0}/4\label{eq:cross1}
\end{equation}
and
\begin{equation}
\mathbf{P}\left[\Psi_{0,\lambda_{j}^{w_{j}^{(\eta)}},\mathrm{R}}(\mathfrak{R}_{j}^{(\eta)}+(T_{j},0))\le w_{i}^{(\eta)}\right]\ge p_{0}/4.\label{eq:cross2}
\end{equation}
Now we can build an annulus $\mathfrak{L}$, whose inner square has
width $\eta T_{i}$ and whose outer square has width $3\eta T_{i}$,
inside $\mathfrak{R}_{j}^{(\eta)}\cup(\mathfrak{R}_{j}^{(\eta)}+(T_{j},0))$,
such that $\mathfrak{L}$ surrounds $T_{j}\times\left[\eta T_{j}/2,\eta T_{j}/2+\lambda_{j}^{w_{j}^{(\eta)}}\right]$.
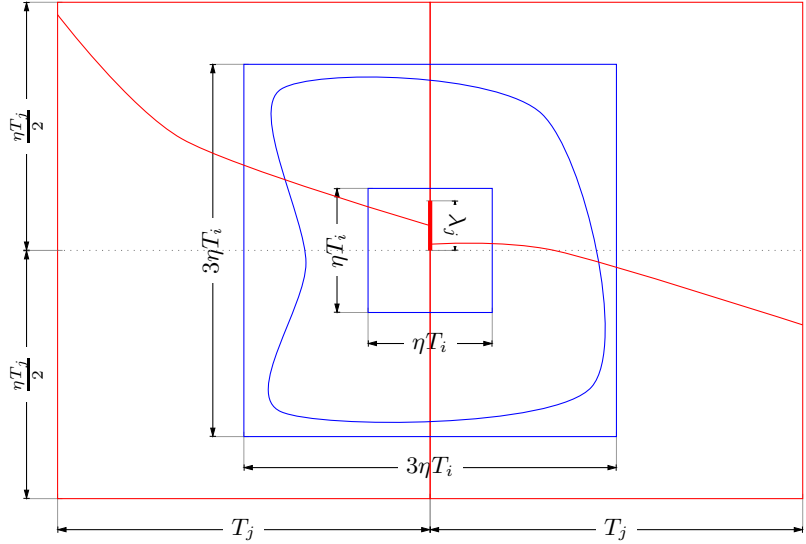
\begin{figure}[t]
\begin{centering}
\begin{tikzpicture}[x=0.65in,y=0.65in]
\dimline[extension start length=0.75*0.2in,extension end length=0.75*0.2in]{(3.2,2.4)}{(3.2,2)}{$\lambda_j$}
\dimline[extension start length=0.75*0.25in,extension end length=0.75*0.25in]{(4.5,0.25)}{(1.5,0.25)}{$3\eta T_i$}
\dimline[extension start length=0.75*0.25in,extension end length=0.75*0.25in]{(3.5,1.25)}{(2.5,1.25)}{$\eta T_i$}
\dimline[extension start length=0.75*0.25in,extension end length=0.75*0.25in]{(3,-0.25)}{(0,-0.25)}{$T_j$}
\dimline[extension start length=0.75*0.25in,extension end length=0.75*0.25in]{(6,-0.25)}{(3,-0.25)}{$T_j$}
\dimline[extension start length=0.75*0.25in,extension end length=0.75*0.25in]{(-0.25,0)}{(-0.25,2)}{$\frac{\eta T_j}{2}$}
\dimline[extension start length=0.75*0.25in,extension end length=0.75*0.25in]{(-0.25,2)}{(-0.25,4)}{$\frac{\eta T_j}{2}$}
\dimline[extension start length=0.75*0.25in,extension end length=0.75*0.25in]{(1.25,0.5)}{(1.25,3.5)}{$3\eta T_i$}
\dimline[extension start length=0.75*0.25in,extension end length=0.75*0.25in]{(2.25,1.5)}{(2.25,2.5)}{$\eta T_i$}

\draw [red] (0,0) -- (3,0) -- (3,4) -- (0,4) -- cycle;
\draw [red] (3,0) -- (6,0) -- (6,4) -- (3,4) -- cycle;
\draw [red, ultra thick] (3,2) -- (3,2.4);
\draw [gray,dotted] (0,2) -- (6,2);
\draw [blue] (2.5,1.5) -- (3.5,1.5) -- (3.5,2.5) -- (2.5,2.5) -- cycle;
\draw [blue] (1.5,0.5) -- (4.5,0.5) -- (4.5,3.5) -- (1.5,3.5) -- cycle;
\draw [thin,blue] plot [smooth cycle] coordinates { (1.8,0.7) (2,1.9) (1.8,3.3) (3.9,3.1) (4.3,0.9) };
\draw [thin,red] plot [smooth] coordinates { (0,3.9)  (1,2.9)  (3,2.2) };
\draw [thin,red] plot [smooth] coordinates { (6,1.4) (4,2) (3,2.05) };

\end{tikzpicture}
\par\end{centering}
\caption{\label{fig:ring}Geometric construction in the proof of \lemref{ring}.
The two red crossings are connected by the blue circuit. (Again we
omit the weight subscripts.)}
\end{figure}

By~\eqref{startcross}, our upper bound on $\eta$, and \corrref{stretch},
we have constants $c_{2}$ and $p_{5}$, depending only on $p_{3}$
and the facts that $\eta$ is a constant amount less than $5/4$ and
that $\gamma$ is sufficiently small, so that 
\[
\mathbf{P}[\Psi_{\mathrm{LR}}([0,3\eta T_{i})\times[0,\eta T_{i}))\le c_{2}y]\ge p_{5}.
\]
Let $E$ denote the event that there is a circuit of $Y_{\mathfrak{R}_{j}^{(\eta)}\cup(\mathfrak{R}_{j}^{(\eta)}+(T_{j},0))}$-weight
at most $5c_{2}y$ around $\mathfrak{L}$, and let $E_{1},E_{2},E_{3},E_{4}$
denote rotated and translated copies of $\{\Psi_{\mathrm{LR}}([0,3\eta T_{i})\times[0,\eta T_{i}))\le c_{2}y\}$
whose intersection, up to coarse field error, contains $E$. Now $\mathbf{P}[E_{\alpha}]=\mathbf{P}[\Psi_{\mathrm{LR}}([0,3\eta T_{i})\times[0,\eta T_{i}))\le c_{2}y]\ge p_{5},$
and so, by the \nameref{lem:FKG} and \eqref{relatescale}, we have
\begin{equation}
\mathbf{P}[E]\ge p_{5}^{4}-o_{j-i}(1).\label{eq:ann1}
\end{equation}

Since, up to coarse field error, we have
\begin{multline*}
\{\Psi_{\mathrm{LR}}([0,2T_{j})\times[0,\eta T_{j}))\le2w_{j}^{(\eta)}+5c_{2}y\}\\
\supset E\cap\{\Psi_{\mathrm{L},0,\lambda_{j}^{w_{j}^{(\eta)}}}(\mathfrak{R}_{j}^{(\eta)})\ge w_{i}^{(\eta)}\}\cap\{\Psi_{0,\lambda_{j}^{w_{j}^{(\eta)}},\mathrm{R}}(\mathfrak{R}_{j}^{(\eta)}+(T_{j},0))\le w_{i}^{(\eta)}\}
\end{multline*}
(see \figref{ring}), the \nameref{lem:FKG} inequality, along with~(\ref{eq:cross1}),~(\ref{eq:cross2}),
and~(\ref{eq:ann1}), tells us that
\[
\mathbf{P}[\Psi_{\mathrm{LR}}([0,2T_{j})\times[0,\eta T_{j}))\le3w_{j}^{(\eta)}+5c_{2}y]\ge(p_{0}/4)^{2}(p_{5}/2)^{4}-o_{j-i}(1),
\]
establishing the lemma with $c_{1}=5c_{2}$ and $p_{4}=(p_{0}/4)^{2}(p_{5}/2)^{4}$.
\end{proof}

\subsection{Multiscale analysis}

We now turn to the multiscale analysis involved in the proof of \thmref{rsw}.
\begin{lem}
\label{lem:use-independence}Let $c_{3}$ be so large that
\begin{equation}
(1-p_{4}^{15})^{\left\lfloor c_{3}/4\right\rfloor }\le p_{0}/8.\label{eq:c3cond}
\end{equation}
Suppose that $\eta\le\frac{256}{255}$ and that \eqref{startcross}
holds for $i$ and $y$. For any $\Delta\ge6$, there is a $j\in[i+\Delta,i+\Delta+c_{3}]$
so that, if $\gamma$ is sufficiently small \emph{relative to $\Delta$,
then}
\begin{equation}
\lambda_{j}^{21W_{j}^{(\eta)}+10c_{1}y}\ge\eta T_{i}.\label{eq:indep-concl}
\end{equation}
\end{lem}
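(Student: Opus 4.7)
The plan is to argue by contradiction. Assume $\lambda_j^{21W_j^{(\eta)}+10c_1 y}<\eta T_i$ for every $j\in[i+\Delta,\,i+\Delta+c_3]$. I will derive a contradiction by producing, at the top scale $j_m$ of a well-chosen subsequence, a probability lower bound strictly greater than $1-p_0/4$ on an event whose probability statement~\enuref{difference} of \lemref{setuplambda} forces to be at most $1-p_0/4$.

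Since $\lambda_j^{\cdot}$ is increasing in its argument and $w_j^{(\eta)}\le 21W_j^{(\eta)}+10c_1 y$, the contradiction assumption gives $\lambda_j^{w_j^{(\eta)}}<\eta T_i$ at each such $j$. After restricting to $j\ge i+8$ (a loss of at most two scales, absorbed by enlarging $c_3$ if necessary), \lemref{ifyoulikeitthenyoushouldhaveputaringonit} applies, and combined with~\eqref{startcross} it produces at each such $j$ an event $A_j$ with $\mathbf{P}[A_j]\ge p_4-o_\Delta(1)$ witnessing an LR crossing of $[0,2T_j)\times[0,\eta T_j)$ of weight at most $2w_j^{(\eta)}+c_1 y$. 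The same bound applies to any admissible translate of the rectangle placed inside a fixed copy of $\mathfrak{R}_{j_m}^{(\eta)}$.

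I then select $m=\lfloor c_3/4\rfloor$ scales $j_1<\cdots<j_m$ from the range with gaps $j_{l+1}-j_l\ge 4$. By the coarse-field decomposition of \secref{preliminaries} (see \remref{notationabuse} and the Gaussian tail bound~\eqref{tailgff}), events built from the fine fields at different $j_l$ are approximately independent, with joint-law error $o_\Delta(1)$. At each $j_l$ I build a compound success event $G_l$ by intersecting (via FKG) fifteen translated copies of $A_{j_l}$, placed inside a fixed copy of $\mathfrak{R}_{j_m}^{(\eta)}$, together with short stitching pieces whose total weight I control using \corref{stretch}, \corref{squish}, and the easy-crossing quantile estimates of \secref{lowerbounds}. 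The translates and stitches should be arranged so that their joint success concatenates into a single path from the left side of $\mathfrak{R}_{j_m}^{(\eta)}$ to the interval $[\eta T_{j_m}/2+\alpha_{\ast},\eta T_{j_m})$ on the right, with $\alpha_{\ast}:=\eta T_i-1$, of total weight at most $21W_{j_m}^{(\eta)}+10c_1 y$. Consequently $G_l\subset\{\Psi_{\mathrm{L},\alpha_{\ast},\eta T_{j_m}/2}(\mathfrak{R}_{j_m}^{(\eta)})\le 21W_{j_m}^{(\eta)}+10c_1 y\}$ and $\mathbf{P}[G_l]\ge p_4^{15}-o_\Delta(1)$.

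By approximate independence of the $G_l$'s across the well-separated scales and by~\eqref{c3cond},
\[
\mathbf{P}\Bigl[\bigcup_{l=1}^m G_l\Bigr]\ge 1-(1-p_4^{15})^m-o_\Delta(1)\ge 1-p_0/8-o_\Delta(1),
\]
so $\mathbf{P}[\Psi_{\mathrm{L},\alpha_{\ast},\eta T_{j_m}/2}(\mathfrak{R}_{j_m}^{(\eta)})\le 21W_{j_m}^{(\eta)}+10c_1 y]\ge 1-p_0/8-o_\Delta(1)$. On the other hand, since $\lambda_{j_m}^{21W_{j_m}^{(\eta)}+10c_1 y}$ is a positive integer strictly less than $\eta T_i$, we have $\alpha_{\ast}=\eta T_i-1\ge\lambda_{j_m}^{21W_{j_m}^{(\eta)}+10c_1 y}$; monotonicity of $\Psi_{\mathrm{L},\alpha,\eta T_{j_m}/2}$ in $\alpha$ (larger $\alpha$ means a smaller right-side target) then gives $\mathbf{P}[\Psi_{\mathrm{L},\alpha_{\ast},\eta T_{j_m}/2}\le 21W_{j_m}^{(\eta)}+10c_1 y]\le 1-p_0/4$ by~\lemref{setuplambda}(\enuref{difference}) applied at $j_m$. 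Since $1-p_0/8>1-p_0/4$, this is the desired contradiction. I expect the main obstacle to be the explicit geometric arrangement of the fifteen translates and stitching pieces so that their joint success deterministically realizes the target $\Psi_{\mathrm{L},\alpha_{\ast},\eta T_{j_m}/2}(\mathfrak{R}_{j_m}^{(\eta)})$-crossing within the tight weight budget $21W_{j_m}^{(\eta)}+10c_1 y$, together with a careful bookkeeping of the coarse-field error to keep the product estimate valid up to $o_\Delta(1)$.
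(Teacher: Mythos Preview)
Your overall contradiction framework and use of statement~\ref{enu:difference} at the top scale are sound, but the construction of the events $G_l$ cannot work as described, and this is not merely a bookkeeping obstacle.

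The first problem is geometric. You propose to build a path from the \emph{left side} of $\mathfrak{R}_{j_m}^{(\eta)}$ to a prescribed interval on the right side using fifteen translates of scale-$j_l$ crossings (each of width $2T_{j_l}$). But $T_{j_m}/T_{j_l}$ can be of order $\sqrt{2}^{\,c_3}$, and $c_3$ is determined by $p_4$ via~\eqref{c3cond}, not by the number fifteen. A fixed number of $O(T_{j_l})$-sized pieces simply cannot span the width $T_{j_m}$ for the smaller $j_l$ in your sequence, so there is no arrangement realizing the inclusion $G_l\subset\{\Psi_{\mathrm{L},\alpha_\ast,\eta T_{j_m}/2}\le\cdots\}$.

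The second problem is independence. Even if one could build full left--right crossings at each $j_l$, such events would all live on the \emph{same} box $\mathfrak{R}_{j_m}^{(\eta)}$ and hence overlap spatially; the coarse-field decomposition~\eqref{tailgff} gives approximate independence only when the supporting regions have disjoint blow-ups, not merely because the events are indexed by different scales. So the product estimate $\prod_l(1-\mathbf{P}[G_l])$ is unjustified.

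The paper's proof resolves both issues with one idea: rather than building the full crossing $\Psi_{\mathrm{L},\alpha_\ast,\eta T_{\tilde{j}}/2}$ at each scale, it isolates a \emph{localized} connector event $E$ (a path along the right edge joining the segment above $\eta T_{\tilde{j}}/2+\eta T_i$ to the segment below $\eta T_{\tilde{j}}/2$) and observes that $J_1\cap E\subset J_2$. Then statement~\ref{enu:difference} forces $\mathbf{P}[E^c]\ge p_0/4$. The event $E$ can be realized by a half-annulus (``C-shape'') of three $3\eta T_j\times\eta T_j$ hard crossings at each scale $j$; this is where the $(p_4^5)^3=p_4^{15}$ arises. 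These half-annuli at scales $j\in4\mathbf{Z}$ are nested with \emph{disjoint blow-ups}, which is what legitimizes the product bound $\mathbf{P}[E^c]\le(1-p_4^{15})^{\lfloor c_3/4\rfloor}+o_\Delta(1)$ and yields the contradiction with~\eqref{c3cond}. The missing ingredient in your proposal is precisely this passage from a global crossing to a local connector.
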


\begin{proof}
Let $\tilde{j}=i+\Delta+c_{3}$. Suppose for the sake of contradiction
that, for all $i+\Delta<j\le\tilde{j}$, we have $\lambda_{j}^{w_{j}^{(\eta)}}<\eta T_{i}$,
and moreover that we have 
\begin{equation}
\lambda_{\tilde{j}}^{w_{\tilde{j}}^{(\eta)}+2(10W_{\tilde{j}}+5c_{1}y)}<\eta T_{i}.\label{eq:topone}
\end{equation}
Then \lemref{ring} implies that
\[
\mathbf{P}[\Psi_{\mathrm{LR}}([0,2T_{j})\times[0,\eta T_{j}))\le2w_{j}^{(\eta)}+c_{1}y]\ge p_{4}-o_{\Delta}(1)
\]
for each $i+\Delta<j\le\tilde{j}$. By \corrref{stretch}, this yields
\begin{equation}
\mathbf{P}[\Psi_{\mathrm{LR}}([0,3\eta T_{j})\times[0,\eta T_{j}))\le10w_{j}^{(\eta)}+5c_{1}y]\ge p_{4}^{5}-o_{\Delta}(1).\label{eq:Eprob}
\end{equation}
Let
\begin{align*}
J_{1} & =\{\Psi_{\mathrm{L},0,\eta T_{i}}(\mathfrak{R}_{\tilde{j}}^{(\eta)})\le w_{\tilde{j}}^{(\eta)}\}\text{ and}\\
J_{2} & =\{\Psi_{\mathrm{L},\eta T_{i},\eta T_{\tilde{j}}/2}(\mathfrak{R}_{\tilde{j}}^{(\eta)})\le w_{\tilde{j}}^{(\eta)}+2(10W_{\tilde{j}}+5c_{1}y)\}.
\end{align*}
Then~\eqref{topone} and \lemref{setuplambda}(\enuref{difference})
imply that we have
\begin{equation}
\mathbf{P}[J_{1}]-\mathbf{P}[J_{2}]\ge p_{0}/4.\label{eq:init-assumption}
\end{equation}
Let $E$ be the event that there is a path in $\mathfrak{R}_{\tilde{j}}^{(\eta)}$
of weight at most \textbf{$2(10W_{\tilde{j}}+5c_{1}y)$}, from $\{T_{\tilde{j}}-1\}\times[\eta T_{\tilde{j}}/2+\eta T_{i},\eta T_{\tilde{j}}]$
to $\{T_{j}-1\}\times[0,\eta T_{j}/2]$. Note that $J_{1}\cap E\subset J_{2},$
so
\begin{equation}
\mathbf{P}[J_{2}]\ge\mathbf{P}[J_{1}\cap E]\ge\mathbf{P}[J_{1}]\mathbf{P}[E]\label{eq:combination}
\end{equation}
by the \nameref{lem:FKG}. Combining~\eqref{init-assumption} and~\eqref{combination},
we get that
\begin{equation}
\mathbf{P}[E^{c}]\ge\mathbf{P}[J_{1}]\mathbf{P}[E^{c}]\ge p_{0}/4.\label{eq:Eprob2}
\end{equation}

For each $i+\Delta\le j<\tilde{j}$ such that $j\in4\mathbf{Z}$,
let $E_{1}^{j},E_{2}^{j},E_{3}^{j}$ be the events that there are
hard crossings—of weight at most \textbf{$10w_{j}^{(\eta)}+5c_{1}y$}—in
respectively, three rectangles of shorter side-length $\eta T_{j}$
and longer side-length $3\eta T_{j}$, that together form a ``C''
shape connecting $\{T_{\tilde{j}}-1\}\times[\eta T_{\tilde{j}}/2+\eta T_{i},\eta T_{\tilde{j}})$
to $\{T_{\tilde{j}}-1\}\times[0,\eta T_{j}/2)$, and which moreover
are chosen so that the blow-ups of the rectangles only intersect other
rectangles corresponding to the same $j$. The setup is illustrated
in \figref{cshapes}. 
\begin{figure}[p]
\begin{centering}
\tiny
\begin{tikzpicture}[x=0.73in,y=0.73in]
\draw [red] (-2,-2) -- (3,-2) -- (3,6) -- (-2,6) -- cycle;
\draw [red, ultra thick] (3,2) -- (3,2.25);
\draw [gray,dotted] (-2,2) -- (3,2);
\draw [blue,fill=blue,fill opacity=0.02] (3,2.5) -- (2.5,2.5) -- (2.5,2) -- (3,2)--  (3,1.5) -- (2,1.5) -- (2,3) -- (3,3) --   cycle;
\draw [blue,fill=blue,fill opacity=0.02] (3,3.25) -- (1.75,3.25) -- (1.75,1.25) -- (3,1.25)-- (3,-0.75)-- (-0.75,-0.75) -- (-0.75,5.25) -- (3,5.25) -- cycle;
\dimline[extension start length=0.25*0.7in, extension end length=0.25*0.7in]{(3,-2.25)}{(-2,-2.25)}{$T_{\tilde{j}}$}
\dimline[extension start length=0.25*0.7in, extension end length=0.25*0.7in]{(-2.25,-2)}{(-2.25,2)}{$\frac{\eta T_{\tilde{j}}}{2}$}
\dimline[extension start length=0.25*0.7in, extension end length=0.25*0.7in]{(-2.25,2)}{(-2.25,6)}{$\frac{\eta T_{\tilde{j}}}{2}$}
\dimline[extension start length=0.25*0.7in, extension end length=0.25*0.7in]{(3.25,2.25)}{(3.25,2)}{$\eta T_i$}
\dimline[extension start length=0.5*0.7in, extension end length=0.5*0.7in]{(3.5,2.5)}{(3.5,2)}{$\eta T_{i+\Delta}$}
\dimline[extension start length=0.75*0.7in, extension end length=0.75*0.7in]{(3.75,3)}{(3.75,2)}{$3\eta T_{i+\Delta}$}
\dimline[extension start length=1*0.7in, extension end length=1*0.7in]{(4,3.25)}{(4,2)}{$\eta T_{i+\Delta+4}$}
\dimline[extension start length=1.25*0.7in, extension end length=1.25*0.7in]{(4.25,5.25)}{(4.25,2)}{$3\eta T_{i+\Delta+4}$}
\end{tikzpicture}
\par\end{centering}
\caption{\label{fig:cshapes}Geometric construction in the proof of \lemref{use-independence}.
(In reality there would be \emph{many} more half-annuli!)}
\end{figure}

By~\eqref{Eprob} we have
\[
\mathbf{P}[E_{\alpha}^{j}]\ge p_{4}^{5}-o_{\Delta}(1).
\]
Let $\tilde{E}_{1}^{j},\tilde{E}_{2}^{j},\tilde{E}_{3}^{j}$ be defined
in the same way as $E_{1}^{j},E_{2}^{j},E_{3}^{j}$, except with the
requirement that the $Y_{\mathfrak{R}_{\tilde{j}}^{(\eta)}}$-weight
of the paths be at most \textbf{$2(10w_{j}^{(\eta)}+5c_{1}y)$,} rather
than that the weight of the paths with respect to the GFF in their
own rectangles be at most \textbf{$10w_{j}^{(\eta)}+5c_{1}y$}.

For each $j$, we have that $\tilde{E}_{1}^{j}\cap\tilde{E}_{2}^{j}\cap\tilde{E}_{3}^{j}\subset E$.
Let $Z=\max\limits _{\alpha,j,x}W_{\alpha}^{j}(x)$, where $W_{\alpha}^{j}$
is the coarse field correction term for the rectangle in $E_{\alpha}^{j}$
as in \eqref{relatescale}. Now note that $\tilde{E}_{1}^{j}\cap\tilde{E}_{2}^{j}\cap\tilde{E}_{3}^{j}\supset E_{1}^{j}\cap E_{2}^{j}\cap E_{3}^{j}\cap\{Z\le2\}$,
so we can compute, using the independence of the fine fields,
\begin{align*}
\mathbf{P}[E^{c}] & \le\mathbf{P}\left[\bigcap_{\substack{i+\Delta\le j<\tilde{j}\\
j\in4\mathbf{Z}
}
}(\tilde{E}_{1}^{j}\cap\tilde{E}_{2}^{j}\cap\tilde{E}_{3}^{j})^{c}\right]\\
 & \le\mathbf{P}\left[\bigcap_{\substack{i+\Delta\le j<\tilde{j}\\
j\in4\mathbf{Z}
}
}\left((E_{1}^{j}\cap E_{2}^{j}\cap E_{3}^{j})^{c}\cup\{Z>2\}\right)\right]\\
 & \le o_{\Delta}(1)+\prod_{\substack{i+\Delta\le j<\tilde{j}\\
j\in4\mathbf{Z}
}
}(1-\mathbf{P}[E_{1}^{j}\cap E_{2}^{j}\cap E_{3}^{j}])\\
 & \le\prod_{\substack{i+\Delta\le j<\tilde{j}\\
j\in4\mathbf{Z}
}
}(1-\mathbf{P}[E_{1}^{j}]^{3})+o_{\Delta}(1)\\
 & \le(1-p_{4}^{15})^{\left\lfloor c_{3}/4\right\rfloor }+o_{\Delta}(1).
\end{align*}
Now if $\gamma$ is small enough (relative to $\Delta$), then combined
with~\eqref{c3cond} this implies that $\mathbf{P}[E^{c}]<p_{0}/4,$
contradicting~\eqref{Eprob2}. So either, for some $i+\Delta<j\le\tilde{j}$,
we have $\lambda_{j}^{w_{j}^{(\eta)}}\ge\eta T_{i}$, or else we have
$\lambda_{\tilde{j}}^{w_{\tilde{j}}^{(\eta)}+2(10W_{\tilde{j}}^{(\eta)}+5c_{1}y)}\ge\eta T_{i}$,
implying \eqref{indep-concl} in any case.
\end{proof}
\begin{lem}
\label{lem:superlinear}Write $f(k)=\lambda_{k}^{y_{k}}$ for some
sequence $y_{k}\ge w_{k}^{(\eta)}$. Suppose that $f(k_{0})\ge a\eta T_{k_{0}}$.
Then if $c$ is so large that $\left(\frac{7}{4}\right)^{c}\frac{1}{\sqrt{2}^{c-1}}>\frac{1}{a},$
then there is a $k\in(k_{0},k_{0}+c]$ so that $f(k)\le\frac{7}{4}f(k-1)$
and $f(k-1)\ge a\eta T_{k-1}.$
\end{lem}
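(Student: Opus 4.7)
The plan is to argue by contradiction, leveraging that $\lambda_i^y \le \eta T_i/8$ by its very definition in \eqref{lambdadef} to cap the growth of $f$. Suppose no such $k$ exists, so that for every $k\in(k_0,k_0+c]$ we have either
\[
f(k) > \tfrac{7}{4} f(k-1) \quad\text{or}\quad f(k-1) < a\eta T_{k-1}.
\]
First I would induct on $j$ to show that $f(k_0+j)\ge a\eta T_{k_0+j}$ for every $j\in\{0,1,\ldots,c\}$. The base case $j=0$ is the hypothesis $f(k_0)\ge a\eta T_{k_0}$. For the step, if $f(k_0+j)\ge a\eta T_{k_0+j}$, then taking $k=k_0+j+1$ in the contradiction hypothesis kills the second alternative, forcing $f(k_0+j+1)>\tfrac{7}{4}f(k_0+j)\ge \tfrac{7}{4}a\eta T_{k_0+j}$. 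Since $T_{k_0+j+1}/T_{k_0+j}\in\{4/3,3/2\}$ by the explicit form of $T_i$ in \eqref{Ti-def}, and $3/2<7/4$, this is more than enough to conclude $f(k_0+j+1)\ge a\eta T_{k_0+j+1}$, closing the induction.

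Once the induction is complete, the second alternative in the contradiction hypothesis is ruled out at every step, so the first must hold everywhere: $f(k)>\tfrac{7}{4}f(k-1)$ for all $k\in(k_0,k_0+c]$. Iterating gives
\[
f(k_0+c)>\left(\tfrac{7}{4}\right)^c f(k_0)\ge \left(\tfrac{7}{4}\right)^c a\,\eta T_{k_0}.
\]
On the other hand, the definition of $\lambda$ pins down the universal upper bound $f(k_0+c)=\lambda_{k_0+c}^{y_{k_0+c}}\le \eta T_{k_0+c}/8$, and the scale estimate \eqref{Tij-estimates} yields $T_{k_0+c}\le \sqrt{2}^{\,c+1}T_{k_0}$. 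Combining these gives $(7/4)^c a < \sqrt{2}^{\,c+1}/8=\sqrt{2}^{\,c-5}$, which contradicts the standing hypothesis $(7/4)^c a>\sqrt{2}^{\,c-1}$ (note that $\sqrt{2}^{\,c-1}$ is larger than $\sqrt{2}^{\,c-5}$ by a factor of $4$, giving a comfortable margin).

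There isn't really a main obstacle here: the result is essentially a bookkeeping exercise comparing the geometric growth rate $7/4$ forced by the negation of the conclusion against the geometric scale growth $T_{i+1}/T_i\le 3/2$ and the hard ceiling $\lambda\le \eta T/8$. The only subtlety worth checking is that the inductive propagation of $f(k)\ge a\eta T_k$ uses the ratio $T_{k+1}/T_k\le 3/2$ rather than the looser $\sqrt{2}^{\,2}=2$, so that the factor $7/4$ beats the scale ratio with room to spare; and that the final contradiction uses the two-sided estimate in \eqref{Tij-estimates} in the right direction.
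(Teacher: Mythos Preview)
Your proof is correct and follows essentially the same approach as the paper: both exploit the universal ceiling $\lambda_k\le \eta T_k/8$, the ratio bound $T_{k+1}/T_k\le 3/2<7/4$, and the estimate \eqref{Tij-estimates} to force a contradiction from unchecked geometric growth. The only organizational difference is that the paper first finds \emph{some} $k$ with $f(k)\le\tfrac{7}{4}f(k-1)$ and then takes the first such $k$ to verify $f(k-1)\ge a\eta T_{k-1}$ via the monotonicity of $f(j)/T_j$, whereas you negate the full conjunction and run the induction directly---but the content is the same.
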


\begin{proof}
If $f(k)\ge\frac{7}{4}f(k-1)$ for all $k_{0}<k\le k_{0}+c$, then
we have (using~(\ref{eq:Tij-estimates}))
\[
\frac{1}{2}\eta T_{k_{0}+c}\ge f(k_{0}+c)\ge\left(7/4\right)^{c}a\eta T_{k_{0}}\ge\left(7/4\right)^{c}a\eta\frac{1}{\sqrt{2}^{c+1}}T_{k_{0}+c},
\]
contradicting our assumption on $c$. Therefore, there is some $k\in(k_{0},k_{0}+c]$
so that $f(k)\le\frac{7}{4}f(k-1)$. Moreover, if we choose the \emph{first}
such $k$, then we have 
\[
\frac{f(j)}{T_{j}}\ge\frac{7}{4}\cdot\frac{f(j-1)}{T_{j}}\ge\frac{7}{4}\cdot\frac{f(j-1)}{3T_{j-1}/2}\ge\frac{f(j-1)}{T_{j-1}}
\]
for all $k_{0}<j<k$, so by induction we have $f(k-1)\ge a\eta T_{k-1}$. 
\end{proof}
\begin{lem}
\label{lem:rsw-inductive-step}Let $c_{5}=\max\{1386,660c_{1}\}.$
Fix $\Delta\ge6$ and suppose that 
\begin{equation}
1<\eta\le1+\frac{1}{32\sqrt{2}^{\Delta+c_{3}+1}}.\label{eq:etasmall}
\end{equation}
Then if $\gamma$ is sufficiently small relative to $\Delta$, then
there is a $\chi(\Delta)\ge\Delta$ so that if (\ref{eq:startcross})
holds for $i$ and $y$, then there is a $k\in[i+\Delta,i+\chi(\Delta)]$
so that (\ref{eq:startcross}) holds for $i=k$ and $y=c_{5}(W_{k}^{(\eta)}+y)$.
\end{lem}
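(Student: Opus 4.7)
The plan is to chain Lemmas \ref{lem:use-independence}, \ref{lem:superlinear}, and \ref{lem:usethex} using the weight sequence $y_k := 21 W_k^{(\eta)} + 10 c_1 y$, chosen precisely so that the output of Lemma \ref{lem:use-independence} is already of the form $\lambda_j^{y_j}$ and so that $y_k \ge w_k^{(\eta)}$ for every $k$. First I would feed the hypothesis (\ref{eq:startcross}) at scale $i$ into Lemma \ref{lem:use-independence}, producing some $j \in [i+\Delta, i+\Delta+c_3]$ with $\lambda_j^{y_j} \ge \eta T_i$. Rewriting via (\ref{eq:Tij-estimates}), this says $\lambda_j^{y_j} \ge a \eta T_j$ with $a := \sqrt{2}^{-(\Delta + c_3 + 1)}$.

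Next I would apply Lemma \ref{lem:superlinear} at $k_0 = j$, choosing $c = c(\Delta)$ large enough that $(7/4)^c/\sqrt{2}^{\,c-1} > 1/a$ (which is possible since $c_3$ is absolute and $(7/4)/\sqrt{2} > 1$). The lemma then produces $k \in (j, j+c]$ with $\lambda_k^{y_k} \le \tfrac{7}{4}\lambda_{k-1}^{y_{k-1}}$ and $\lambda_{k-1}^{y_{k-1}} \ge a \eta T_{k-1}$, so one can take $\chi(\Delta) := \Delta + c_3 + c$. This $k$ is tailor-made for Lemma \ref{lem:usethex}(\ref{enu:nontrivial}) with $i \leftarrow k$, $y \leftarrow y_k$, $z \leftarrow y_{k-1}$: the inequalities $y_{k-1} \ge w_{k-1}^{(\eta)}$ and $\lambda_k^{y_k} \le \tfrac{7}{4}\lambda_{k-1}^{y_{k-1}}$ are in hand, and the third condition $\eta - \lambda_{k-1}^{y_{k-1}}/(32 T_{k-1}) < 1$ follows because (\ref{eq:etasmall}) gives $\eta - 1 \le a/32$, while $\lambda_{k-1}^{y_{k-1}}/(32 T_{k-1}) \ge a\eta/32 > a/32 \ge \eta - 1$; the side requirement $\eta \le 256/255$ is also immediate from (\ref{eq:etasmall}) since $\Delta \ge 6$.

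The conclusion of Lemma \ref{lem:usethex} then reads $\mathbf{P}[\Psi_{\mathrm{LR}}([0,5T_k/4) \times [0, \eta T_k)) \le 55 y_k + 8 y_{k-1}] \ge p_3$, and the final step is arithmetic: since $W_{k-1}^{(\eta)} \le W_k^{(\eta)}$, one has $55 y_k + 8 y_{k-1} \le (1155 + 168) W_k^{(\eta)} + (550 + 80) c_1 y = 1323 W_k^{(\eta)} + 630 c_1 y \le c_5(W_k^{(\eta)} + y)$ by the choice of $c_5$, yielding (\ref{eq:startcross}) at scale $k$ with weight $c_5(W_k^{(\eta)} + y)$. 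The main obstacle is purely bookkeeping---one has to arrange the weight sequence $y_k$ so that the output of Lemma \ref{lem:use-independence} matches the input to Lemma \ref{lem:superlinear}, and then verify that the multiplicative constants $55$ and $8$ absorbed into $55 y_k + 8 y_{k-1}$ still fit the specified $c_5 = \max\{1323, 630 c_1\}$; no genuinely new geometric idea is required beyond the three preceding lemmas.
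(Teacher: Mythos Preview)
Your proposal is correct and follows essentially the same approach as the paper: invoke Lemma~\ref{lem:use-independence} to get a scale $j$ with $\lambda_j^{y_j}\ge\eta T_i$, apply Lemma~\ref{lem:superlinear} to the sequence $f(k)=\lambda_k^{y_k}$ with $y_k=21W_k^{(\eta)}+10c_1y$ to find a $k$ where the ratio condition holds, verify the hypotheses of Lemma~\ref{lem:usethex}(\ref{enu:nontrivial}) via \eqref{eq:etasmall}, and then do the arithmetic $55y_k+8y_{k-1}\le1323W_k^{(\eta)}+630c_1y\le c_5(W_k^{(\eta)}+y)$. Your bookkeeping is in fact slightly more explicit than the paper's (you spell out the check that $\eta\le256/255$ and the arithmetic behind $c_5$), but the structure is identical.
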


\begin{proof}
By \lemref{use-independence}, there is a $j\in[i+\Delta,i+\Delta+c_{3}]$
so that (using \eqref{Tij-estimates}) we have $\lambda_{j}^{21W_{j}^{(\eta)}+10c_{1}y}\ge\eta T_{i}\ge\frac{\eta T_{j}}{\sqrt{2}^{\Delta+c_{3}+1}}.$
Let $\xi(\Delta)$ be so large that $\frac{(7/4)^{\xi(\Delta)}}{\sqrt{2}^{\xi(\Delta)-1}}>\sqrt{2}^{\Delta+c_{3}+1}.$
Then if we put $f(k)=\lambda_{k}^{11eW_{k}^{(\eta)}+5ec_{1}y}$ and
let $\chi(\Delta)=\xi(\Delta)+c_{3}$, by \lemref{superlinear} there
is some $k\in(j,j+\xi(\Delta)]\subset[i+\Delta,i+\Delta+c_{3}+\xi(\Delta)]=[i+\Delta,i+\chi(\Delta)]$
so that
\[
\lambda_{k}^{21W_{k}^{(\eta)}+10c_{1}y}\le\frac{7}{4}\lambda_{k-1}^{21W_{k-1}^{(\eta)}+10c_{1}y}
\]
and
\[
\frac{1}{32}\lambda_{k-1}^{21W_{k-1}^{(\eta)}+10c_{1}y}\ge\frac{\eta T_{k-1}}{32\sqrt{2}^{\Delta+c_{3}+1}}\ge\frac{T_{k-1}}{32\sqrt{2}^{\Delta+c_{3}+1}}>(\eta-1)T_{k-1},
\]
with the last inequality by \eqref{etasmall}. Thus the hypotheses
of \lemref{usethex} hold with $i=k$, $y=21W_{k}^{(\eta)}+10c_{1}y$,
and $z=21W_{k-1}^{(\eta)}+10c_{1}y$ (where the left-hand sides are
in the notation of the statement of \lemref{usethex} and the right-hand
sides are in the notation of the present proof). This means that
\[
\mathbf{P}[\Psi_{\mathrm{LR}}([0,\tfrac{5T_{k}}{4})\times[0,\eta T_{k}))\ge55(21W_{k}^{(\eta)}+10c_{1}y)+11(21W_{k-1}^{(\eta)}+10c_{1}y)]\ge p_{3},
\]
which is to say that \eqref{startcross} holds with $y=c_{5}(W_{k}^{(\eta)}+y)$
(where again the left-hand side is in the notation of \eqref{startcross}
and the right-hand side is in the present notation).
\end{proof}
\begin{lem}
\label{lem:tassion-conclusion}Fix $\Delta\ge6$ and suppose that
$\eta-1\le2^{-(\Delta+c_{3}+7)}.$ Then there is an increasing sequence
$1=i_{1},i_{2},i_{3},\ldots$ so that
\begin{equation}
i_{r+1}\in[i_{r}+\Delta,i_{r}+\chi(\Delta)],\label{eq:irbound}
\end{equation}
and, for each $r$, \eqref{startcross} holds for $i=i_{r}$ and
\begin{equation}
y=\sum_{s=1}^{r}c_{5}^{r+1-s}(W_{i_{s}}^{(\eta)}\vee y_{1}^{*}),\label{eq:recursion}
\end{equation}
where we recall that $y_{1}^{*}$ is defined to be the quantity satisfying
\eqref{blowupthestartweight}.
\end{lem}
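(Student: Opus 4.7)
The plan is a straightforward induction on $r$: \lemref{usethex} will handle the base case and \lemref{rsw-inductive-step} will do the inductive work. At every stage I maintain the predicate that \eqref{startcross} holds at $i=i_r$ with $y=y_r:=\sum_{s=1}^{r}c_{5}^{r+1-s}(W_{i_s}^{(\eta)}\vee y_1^{*})$.

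For the base case, I would set $i_1=1$. By definition of $y_1^{*}$ in \eqref{blowupthestartweight}, $\lambda_1^{y_1^{*}}=\eta T_1/8$, so case (2) of \lemref{usethex} applies with $y=y_1^{*}$ and $z=0$; the hypothesis $\eta<256/255$ is immediate from $\eta-1\le 2^{-(\Delta+c_3+7)}\le 2^{-13}$, and $y_1^{*}\ge w_1^{(\eta)}$ follows from the monotonicity of $\lambda_1^{y}$ in $y$ recorded in the remark after \eqref{blowupthestartweight}. This produces \eqref{startcross} at $i=1$ with right-hand side $55y_1^{*}$; since $c_{5}\ge 55$ and $W_{i_1}^{(\eta)}\vee y_1^{*}\ge y_1^{*}$, monotonicity of the event in \eqref{startcross} upgrades this to the desired $y=c_{5}(W_{i_1}^{(\eta)}\vee y_1^{*})=y_1$.

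For the inductive step, I would first verify that the hypothesis $\eta\le 1+1/(32\sqrt{2}^{\Delta+c_3+1})$ of \lemref{rsw-inductive-step} holds; this reduces to the elementary estimate $2^{-(\Delta+c_3+7)}\le 2^{-5-(\Delta+c_3+1)/2}$, valid because $\Delta\ge 6$. Applying \lemref{rsw-inductive-step} with $i=i_r$ and $y=y_r$ yields some $i_{r+1}\in[i_r+\Delta,i_r+\chi(\Delta)]$ for which \eqref{startcross} holds with $y=c_{5}(W_{i_{r+1}}^{(\eta)}+y_r)$. A one-line expansion gives
\[
c_{5}(W_{i_{r+1}}^{(\eta)}+y_r)=c_{5}W_{i_{r+1}}^{(\eta)}+\sum_{s=1}^{r}c_{5}^{r+2-s}(W_{i_s}^{(\eta)}\vee y_1^{*})\le c_{5}(W_{i_{r+1}}^{(\eta)}\vee y_1^{*})+\sum_{s=1}^{r}c_{5}^{r+2-s}(W_{i_s}^{(\eta)}\vee y_1^{*})=y_{r+1},
\]
so monotonicity in $y$ propagates the inductive hypothesis to $r+1$.

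There is no genuine obstacle beyond bookkeeping: the entire content of the lemma is that the constants manufactured by \lemref{rsw-inductive-step} admit a clean recursive description. The one place where care is needed is matching coefficients — the geometric-series form of \eqref{recursion} is engineered precisely so that the single extra factor of $c_{5}$, together with the additive $c_{5}W_{i_{r+1}}^{(\eta)}$, supplied by each invocation of \lemref{rsw-inductive-step} slots into the next term of the sum, and the $\vee y_1^{*}$ clause is the minimal inflation that accommodates the base case.
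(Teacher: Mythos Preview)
Your proof is correct and follows essentially the same route as the paper's own argument: induction on $r$, with \lemref{usethex} (applied in its case~(2) with $y=y_1^*$, $z=0$) furnishing the base case at $i_1=1$, and \lemref{rsw-inductive-step} driving the inductive step via the same algebraic bound $c_5(W_{i_{r+1}}^{(\eta)}+y_r)\le y_{r+1}$. You supply more detail than the paper does---explicitly verifying $\eta<256/255$ and the aspect-ratio hypothesis~\eqref{etasmall} of \lemref{rsw-inductive-step}---but the skeleton is identical.
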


\begin{proof}
According to \eqref{blowupthestartweight}, we have $\lambda_{1}^{y_{1}^{*}}=\eta T_{1}/8.$
This means that \lemref{usethex} applies, so~\eqref{startcross}
holds for $i=1$ and $y=42y_{1}^{*}$. In other words, if we put $i_{1}=1$,
then the conclusion of the lemma holds for $r=1$.

Now we claim that once we have chosen a suitable $i_{r}$, then we
can also choose a suitable $i_{r+1}$. Indeed, if \eqref{startcross}
holds for $i=i_{r}$ and 
\[
y=\sum_{s=1}^{r}c_{5}^{r+1-s}(W_{i_{s}}^{(\eta)}\vee y_{1}^{*}),
\]
then \lemref{rsw-inductive-step} implies that there is an $i_{r+1}\in[i_{r}+\Delta,i_{r}+\chi(\Delta)]$
so that~\eqref{startcross} holds for $i=i_{r+1}$ and
\[
y=c_{5}\left(W_{i_{r+1}}+\sum_{s=1}^{r}c_{5}^{r+1-s}(W_{i_{s}}^{(\eta)}\vee y_{1}^{*})\right)\le\sum_{s=1}^{r+1}c_{5}^{r+2-s}(W_{i_{s}}^{(\eta)}\vee y_{1}^{*}),
\]
hence for $y=\sum_{s=1}^{r+1}c_{5}^{r+2-s}(W_{i_{s}}^{(\eta)}\vee y_{1}^{*})$
as well. This finishes the inductive step of the proof of the lemma.
\end{proof}
The next lemma uses the fact that our desired results at a given scale
imply the same results at constant multiples of the scale to extend
\lemref{tassion-conclusion} to all scales, and also to better-shaped
boxes.
\begin{lem}
\label{lem:roundit}Fix $\Delta\ge6$ and suppose that $\eta-1\le2^{-(\Delta+c_{3}+7)}.$
We have constants $p(\Delta)$ and $C(\Delta)$ so that for each $i\ge1$,
we have 
\[
\Theta_{\mathrm{hard}}(\mathfrak{R}_{i})\left[p(\Delta)\right]\le C(\Delta)\sum_{j=0}^{\lfloor i/\Delta\rfloor}c_{5}^{j}(W_{i-1-j\Delta}^{(\eta)}\vee y_{1}^{*}).
\]
\end{lem}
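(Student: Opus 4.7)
The plan is to apply \lemref{tassion-conclusion} to extract an $i_r$ close to $i$ and then use a rotation-plus-stacking argument to convert the easy-direction crossing at scale $i_r$ into a hard (bottom-top) crossing of $\mathfrak{R}_i = [0, S_i) \times [0, 2 S_i)$. \lemref{tassion-conclusion} produces an increasing sequence $1 = i_1 < i_2 < \cdots$ with $i_{r+1} - i_r \in [\Delta, \chi(\Delta)]$ along which (\ref{eq:startcross}) holds with weight $y_r := \sum_{s=1}^r c_5^{r+1-s}(W_{i_s}^{(\eta)} \vee y_1^*)$ and probability at least $p_3$. Given $i \ge 2$, I would let $r$ be the largest index with $i_r \le i - 1$ (with the case $i = 1$ handled directly by the $r = 1$ instance). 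Maximality of $r$ together with the spacing bounds forces $i - i_r \le \chi(\Delta)$ and $r \le \lfloor i/\Delta \rfloor + 1$.

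To pass from the hard/horizontal crossing of the landscape box $[0, 5 T_{i_r}/4) \times [0, \eta T_{i_r})$ provided by (\ref{eq:startcross}) to a vertical crossing of the portrait box $[0, \eta T_{i_r}) \times [0, 5 T_{i_r}/4)$, I would use the rotational symmetry of the GFF with Dirichlet boundary conditions: the two crossings are equidistributed, so the portrait box has a vertical crossing of weight at most $y_r$ with probability at least $p_3$. Then I would invoke the vertical analog of \corref{stretch}: stack $k = k(\Delta)$ such portrait crossings vertically, joined in the $\eta T_{i_r} \times \eta T_{i_r}$ overlap squares by $k - 1$ horizontal connector crossings (which, after rotation, are again copies of the hard-landscape event of probability $\ge p_3$), and apply FKG to the coupled fields of \remref{notationabuse}. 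The output is a vertical crossing of a $\eta T_{i_r} \times H_k$ portrait rectangle with $H_k = \eta T_{i_r} + k(5/4 - \eta)T_{i_r}$. Under the hypothesis $\eta - 1 \le 2^{-(\Delta + c_3 + 7)}$ we have $5/4 - \eta \ge 1/8$; combined with $S_i / T_{i_r} = O_\Delta(1)$ (from \eqref{Tij-estimates} and $i - i_r \le \chi(\Delta)$), $k = k(\Delta)$ can be chosen so that $H_k \ge 2 S_i$. FKG then yields probability at least $p_3^{2k-1} - o(1)$ and weight at most $2 k y_r$, with the $o(1)$ coarse-field contribution absorbed by taking $\gamma$ small relative to $\Delta$.

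Since $\eta T_{i_r} \le S_i$, this path lies inside $[0, \eta T_{i_r}) \times [0, 2 S_i) \subset \mathfrak{R}_i$ and spans the full height, so it is a hard crossing of $\mathfrak{R}_i$ of weight at most $2 k(\Delta) y_r$ with probability at least $p(\Delta) := p_3^{2k-1}/2$. To match the claimed form, I would reindex via $j = r - s$ and use $i_{r-j} \le i_r - j \Delta \le i - 1 - j \Delta$ (from $i_r \le i - 1$) together with monotonicity of $W_\cdot^{(\eta)}$:
\[
y_r = \sum_{j=0}^{r-1} c_5^{j+1}\bigl(W_{i_{r-j}}^{(\eta)} \vee y_1^*\bigr) \le c_5 \sum_{j=0}^{\lfloor i/\Delta \rfloor} c_5^j \bigl(W_{i-1-j\Delta}^{(\eta)} \vee y_1^*\bigr),
\]
establishing the lemma with $C(\Delta) = 2 c_5 k(\Delta)$.

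The main obstacle I anticipate lies in the vertical-stacking step: one must verify that the mirror of \corref{stretch} produces a bound of the form $\mathbf{P}[\Psi_{\mathrm{LR}}(\mathfrak{A})\le y]^{2k-1} - o_k(1)$ when the connector events are LR crossings of the overlap squares (obtained as rotated copies of (\ref{eq:startcross})), and that the coarse-field corrections from \remref{notationabuse} do not accumulate badly across the $2k-1$ FKG factors -- which they will not, because $k = k(\Delta)$ is bounded. The identification of $i_r$ and the final reindexing are essentially bookkeeping.
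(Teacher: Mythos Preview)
Your proposal is correct and follows essentially the same route as the paper's proof. The paper is simply more terse: after extracting $i_r$ with $i-1-\chi(\Delta)\le i_r\le i-1$ from \lemref{tassion-conclusion} and performing the same reindexing you give, it just says ``\corref{stretch} and \eqref{relatescale} imply the desired result,'' meaning one stretches the hard LR crossing of $[0,5T_{i_r}/4)\times[0,\eta T_{i_r})$ horizontally until its width exceeds $2S_i$ and then observes that (since $\eta T_{i_r}\le S_i$) this sits inside a rotated copy of $\mathfrak{R}_i$---which is exactly your rotate-then-stack argument viewed from the other side.
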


\begin{proof}
By \lemref{tassion-conclusion}, there is an $i_{r}$ so that $i-1-\chi(\Delta)\le i_{r}\le i-1$
and 
\begin{equation}
\mathbf{P}[\Psi_{\mathrm{LR}}([0,5T_{i_{r}}/4)\times[0,\eta T_{i_{r}}))\le y_{r}]\ge p_{3},\label{eq:startcross-2}
\end{equation}
where
\[
y_{r}=\sum_{\alpha=1}^{r}c_{5}^{r+1-\alpha}(W_{i_{\alpha}}^{(\eta)}\vee y_{1}^{*}).
\]
Note that \eqref{irbound} implies that, for each $\alpha$, we have
$i_{q}\le i_{r}-(r-\alpha)\Delta$. This means that
\begin{align*}
y_{r}\le\sum_{\alpha=1}^{r}c_{5}^{r+1-\alpha}(W_{i_{r}-(r-\alpha)\Delta}^{(\eta)}\vee y_{1}^{*}) & =\sum_{j=0}^{r-1}c_{5}^{j+1}(W_{i_{r}-j\Delta}^{(\eta)}\vee y_{1}^{*})\\
 & \le\sum_{j=0}^{r-1}c_{5}^{j+1}(W_{i-1-j\Delta}^{(\eta)}\vee y_{1}^{*}).
\end{align*}
Now \corrref{stretch} and \eqref{relatescale} imply the desired
result.
\end{proof}
We are finally ready to prove our RSW result.
\begin{proof}[Proof of \thmref{rsw}]
Choose $\kappa$ so large that
\begin{align}
a_{\mathrm{pl}}^{\kappa} & <\tfrac{1}{4c_{5}}\text{ and}\label{eq:kappacond}\\
c_{5}^{\frac{1}{2\kappa}} & <1/a_{\mathrm{pl}}.\label{eq:kappacond2}
\end{align}
Put $\Delta=\lceil2\kappa\rceil$ and apply \lemref{roundit}. Fix
$\eta$ as in the statement of that lemma; then we have
\begin{align}
\Theta_{\mathrm{hard}}\mathrlap{(\mathfrak{R}_{i})[p(\Delta)]}\label{eq:rswfinalsplit}\\
 & \le C(\Delta)\sum_{j=0}^{\lfloor\frac{i}{2\kappa}\rfloor}c_{5}^{j}(W_{i-1-j\Delta}^{(\eta)}\vee y_{1}^{*})\nonumber \\
 & \le C_{\mathrm{str}}(\eta)C(\Delta)\sum_{j=0}^{\lfloor\frac{i}{2\kappa}\rfloor}c_{5}^{j}\left(\max_{\alpha\le i-1-j\Delta}\Theta_{\mathrm{easy}}(\mathfrak{R}_{\alpha})[p_{\mathrm{str}}(\eta)]\vee y_{1}^{*}\right)\nonumber \\
 & \le C_{\mathrm{str}}(\eta)C(\Delta)\sum_{j=0}^{\lfloor\frac{i}{2\kappa}\rfloor}c_{5}^{j}\max_{\alpha\le i-1-j\Delta}\Theta_{\mathrm{easy}}(\mathfrak{R}_{\alpha})[p_{\mathrm{str}}(\eta)]+C_{3}y_{1}^{*}\sum_{j=0}^{\lfloor\frac{i}{2\kappa}\rfloor}c_{5}^{j},\nonumber 
\end{align}
with the second inequality by \corrref{squishdown-1}.

Our goal is to relate the sums in \eqref{rswfinalsplit} to a quantile
of an easy crossing of $\mathfrak{R}_{i}$, and our primary tool will
be the \emph{a priori} power-law lower bound of sufficiently small
crossing quantiles given in \propref{exponential-crossings}. However,
\propref{exponential-crossings} only relates very small quantiles,
and the quantiles in \eqref{rswfinalsplit} (coming from \corrref{squishdown-1})
are very large. This is the reason for the assumption \eqref{cvbound-1}:
by applying \eqref{concentration-quantiles}, this assumption lets
us relate very small and very large quantiles, assuming $\delta$
is chosen sufficiently small.

Now we put this plan into action. For each $j$, we have
\[
\max_{\alpha\le i-1-j\Delta}\Theta_{\mathrm{easy}}(\mathfrak{R}_{\alpha})[p_{\mathrm{str}}(\eta)]\le C\max_{\alpha\le i-1-j\Delta}\Theta_{\mathrm{easy}}(\mathfrak{R}_{\alpha})[p_{\mathrm{pl}}]
\]
(with $p_{\mathrm{pl}}$ as in \propref{exponential-crossings}) by
\eqref{cvbound-1} and \eqref{concentration-quantiles}, choosing
$\delta$ small enough (depending on $p_{\mathrm{str}}$ and $p_{\mathrm{pl}}$)
so that the necessary assumptions hold. But then by \propref{exponential-crossings},
we have
\[
\max_{\alpha\le i-1-j\Delta}\Theta_{\mathrm{easy}}(\mathfrak{R}_{\alpha})[p_{\mathrm{str}}(\eta)]\le CC_{\mathrm{pl}}a_{\mathrm{pl}}^{j\Delta+1}\cdot\Theta_{\mathrm{easy}}(\mathfrak{R}_{i})[q_{\mathrm{pl}}].
\]
This gives us 
\begin{align}
\sum_{j=0}^{\lfloor\frac{i}{2\kappa}\rfloor}c_{5}^{j}\max_{\alpha\le i-1-j\Delta}\Theta_{\mathrm{easy}}(\mathfrak{R}_{\alpha})[p_{\mathrm{str}}(\eta)] & \le CC_{\mathrm{pl}}\Theta_{\mathrm{easy}}(\mathfrak{R}_{i})[q]\sum_{j=0}^{\lfloor\frac{i}{2\kappa}\rfloor}c_{5}^{j}a_{\mathrm{pl}}^{j\Delta+1}\label{eq:rswfinalcomb1}\\
 & \le C'\Theta_{\mathrm{easy}}(\mathfrak{R}_{i})[q],\nonumber 
\end{align}
where in the last inequality we use \eqref{kappacond}. Moreover,
we have
\begin{equation}
\sum_{j=0}^{\lfloor\frac{i}{2\kappa}\rfloor}c_{5}^{j}\le\frac{c_{5}^{\frac{i}{2\kappa}+1}-1}{c_{5}-1}\le C''\Theta_{\mathrm{easy}}(\mathfrak{R}_{i})[q],\label{eq:rswfinalcomb2}
\end{equation}
with the last inequality by \eqref{kappacond2} and \propref{exponential-crossings}.

Now choose 
\[
p_{\mathrm{RSW}}\le\min\{p(\Delta),p_{\mathrm{pl}},(32\cdot d_{\mathrm{\mathrm{p}}}^{2})^{-2}\}.
\]
Plugging \eqref{rswfinalcomb1} and \eqref{rswfinalcomb2} into \eqref{rswfinalsplit},
we obtain that \eqref{prswsmall} holds and that
\begin{align*}
\Theta_{\mathrm{hard}}(\mathfrak{R}_{S})[p_{\mathrm{RSW}}] & \le\Theta_{\mathrm{hard}}(\mathfrak{R}_{S})[p(\Delta)]\\
 & \le C'''\Theta_{\mathrm{easy}}(\mathfrak{R}_{S})[q]\\
 & \le C'''\Theta_{\mathrm{easy}}(\mathfrak{R}_{S})[p_{\mathrm{RSW}}].
\end{align*}
Here, the second inequality is by \eqref{concentration-quantiles}
and \eqref{cvbound-1} as long as $\delta$ is sufficiently small
compared to $p_{\mathrm{RSW}}$ .
\end{proof}

\section{Upper bounds on FPP distance and geodesic length\label{sec:upperbounds}}

In this section we derive upper bounds on the crossing distance, geodesic
length, and box diameter.

\subsection{Crossing distance upper bound}

We want to derive a right-tail bound on the crossing distance in terms
of the hard crossing distance at a smaller scale. We show this by
showing that hard crossings from smaller scales can be glued together
to get a crossing at a larger scale, and that there are many nearly-independent
opportunities for this to happen, so we get good control on the right
tail of the crossing distance.

Let $\mathfrak{R}=[0,KS)\times[0,LS)$. Let $\mathfrak{C}=[0,S)^{2}$
and $\mathfrak{A}=[0,S)\times[0,2S)$. Index the dyadic subboxes of
$\mathfrak{R}$ having side length $S$ by row and column according
to the following layout: 
\[
\begin{array}{ccc}
\mathfrak{\mathfrak{C}}_{11} & \cdots & \mathfrak{\mathfrak{C}}_{1L}\\
\vdots & \ddots & \vdots\\
\mathfrak{\mathfrak{C}}_{K1} & \cdots & \mathfrak{\mathfrak{C}}_{KL}
\end{array}
\]

\begin{prop}
\label{prop:tailbound}If $u>0$, we have 
\begin{equation}
\mathbf{P}[\Psi_{\mathrm{LR}}(\mathfrak{R})\ge2uK\mathbf{E}\Psi_{\mathrm{hard}}(\mathfrak{A})]\le u^{-L/3}+o_{K,L}(1).\label{eq:tailindep-1}
\end{equation}
Moreover, if $u\ge u_{0}$ (defined in \eqref{tailgff}), then we
have 
\begin{equation}
\mathbf{P}[\Psi_{\mathrm{LR}}(\mathfrak{R})\ge2uK\mathbf{E}\Psi_{\mathrm{hard}}(\mathfrak{A})]\le u^{-L/4}+\exp\left(-\omega(1)\cdot\frac{(\log u)^{2}}{\log(K\vee L)}\right).\label{eq:longtailindep}
\end{equation}
Finally, as long as $L\ge10$ we have
\begin{equation}
\mathbf{E}\Psi_{\mathrm{LR}}(\mathfrak{R})^{3}\le O_{K,L}(1)(\mathbf{E}\Psi_{\mathrm{hard}}(\mathfrak{A})^{2})^{3/2}.\label{eq:cross3momentapriori}
\end{equation}
\end{prop}

\begin{proof}
For each $0\le j\le L-1$ such that $3\mid j$, let $\Psi_{j}=\Psi_{\mathrm{LR}}((0,jS)+[0,KS)\times[0,S)).$
(Note here that $(0,jS)$ is a point in $\mathbf{Z}^{2}$, not an
open interval.) Then for each $j$, by \eqref{relatescale} and the
strategy illustrated in \figref{hardglue} we have
\[
\Psi_{j}\le\sum_{i=1}^{K-1}\Psi_{\mathrm{hard}}(\mathfrak{\mathfrak{C}}_{j,i}\cup\mathfrak{\mathfrak{C}}_{j,i+1})+\sum_{i=2}^{K-1}\Psi_{\mathrm{hard}}(\mathfrak{\mathfrak{C}}_{j,i}\cup\mathfrak{\mathfrak{C}}_{j\pm1,i}).
\]
Thus we have
\begin{equation}
\mathbf{E}\Psi_{j}\le(1+o_{K,L}(1))(2K-3)\mathbf{E}\Psi_{\mathrm{hard}}(\mathfrak{A})\le2K\mathbf{E}\Psi_{\mathrm{hard}}(\mathfrak{A})\label{eq:tailexp}
\end{equation}
as long as $\gamma$ is sufficiently small compared to $K$ and $L$.
Applying Markov's inequality gives us 
\[
\mathbf{P}[\Psi_{j}\ge2uK\mathbf{E}\Psi_{\mathrm{hard}}(\mathfrak{A})]\le1/u.
\]
Since up to coarse field error we have $\Psi_{\mathrm{LR}}(\mathfrak{R})\le\min_{j}\Psi_{j}$,
and the set $\{\Psi_{j}\mid0\le j\le L-1\text{ and \ensuremath{3} divides \ensuremath{j}}\}$
is independent, we have \eqref{tailindep-1} by \eqref{relatescale}
and \eqref{longtailindep} by \eqref{tailgff} and the assumption
that $u\ge u_{0}$. Finally, the Cauchy–Schwarz inequality and \lemref{minmoment}
below give us 
\begin{equation}
\mathbf{E}\Psi_{\mathrm{LR}}(\mathfrak{R})^{3}\le O_{K,L}(1)\cdot(\mathbf{E}\Psi_{j}^{2})^{3/2}\le O_{K,L}(1)(\mathbf{E}\Psi_{\mathrm{hard}}(\mathfrak{A})^{2})^{3/2}\label{eq:crossmomentapriori}
\end{equation}
as long as $L\ge10$.
\end{proof}
\begin{lem}
\label{lem:minmoment}Let $Y_{1},\ldots,Y_{k}$ be iid random variables
such that that $\mu=\mathbf{E}Y_{i}<\infty$. Let $Z=\min\{Y_{1},\ldots,Y_{k}\}$.
Then for any $a<k$, we have $\mathbf{E}Z^{a}\le\left(1+\frac{a}{k-a}\right)\mu^{a}.$
\end{lem}

\begin{proof}
Simply compute
\begin{multline*}
\mathbf{E}Z^{a}=\int_{0}^{\infty}\mathbf{P}(Y^{a}\ge u)\,du=\int_{0}^{\infty}\mathbf{P}(Y_{1}\ge u^{1/a})^{k}\,du\\
\le\int_{0}^{\infty}\left(1\wedge\frac{\mu}{u^{1/a}}\right)^{k}\,du=\left(1+\frac{a}{k-a}\right)\mu^{a}.\qedhere
\end{multline*}
\end{proof}
\begin{cor}[of \propref{tailbound}]
\label{corr:polyupper}If $\gamma$ is sufficiently small, then there
are constants $C<\infty$ and $b_{\mathrm{pl}}=1+o(1)$ so that for
any $K$ and $S$ we have
\[
\mathbf{E}\Psi_{\mathrm{hard}}([0,2^{r}S)\times[0,2^{r+1}S))\le Cb_{\mathrm{pl}}^{r}\mathbf{E}\Psi_{\mathrm{hard}}([0,S)\times[0,2S)).
\]
Moreover, $b_{\mathrm{pl}}$ can be made arbitrarily close to $1$
by making $\gamma$ sufficiently small.
\end{cor}

\begin{proof}
By \eqref{tailexp}, in the notation of \propref{tailbound} we have
$\mathbf{E}\Psi_{\mathrm{hard}}(\mathfrak{R})\le(2+o_{K,L}(1))K\mathbf{E}\Psi_{\mathrm{hard}}(\mathfrak{A}).$
The statement then follows by induction on the scale after choosing
$K,L$ sufficiently large and $\gamma$ sufficiently small.
\end{proof}

\subsection{Expected geodesic length upper bound}

Let $\mathfrak{R}=[0,KS)\times[0,LS)$ with $K=2^{k}$ and $L=2^{l}$.
Let $\mathfrak{A}=[0,S)\times[0,2S)$. We want to show that a left–right
crossing of $\mathfrak{R}$ will typically not enter too many dyadic
$S\times S$ subboxes of $\mathfrak{R}$. Our strategy will be to
show that a path that enters many boxes will likely have a higher
weight than the tail-bound value obtained from the ``default'' paths
in \propref{tailbound}. Recall the notation $M_{\bullet;S}$ defined
in \subsecref{pathnotation}.
\begin{prop}
For any $u>0$ and $p\in(0,1)$, we have
\begin{multline*}
\mathbf{P}\left[M_{\mathrm{LR};S}(\mathfrak{R})\ge K\max\left\{ 1,4uu_{0}c_{\mathrm{PD}}\frac{\mathbf{E}\Psi_{\mathrm{hard}}(\mathfrak{A})}{\Theta_{\mathrm{easy}}(\mathfrak{A})[p]}\right\} \right]\\
\le u^{-L/3}+C_{\mathrm{p}}L\left(2d_{\mathrm{p}}^{2}\sqrt{p}\right)^{K}+o_{K,L}(1).
\end{multline*}
\end{prop}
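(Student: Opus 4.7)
The plan is to split the event $\{M_{\mathrm{LR};S}(\mathfrak{R})\ge M\}$, where $M := K\max\{1,\,4uu_{0}C_{\mathrm{PD}}\mathbf{E}\Psi_{\mathrm{hard}}(\mathfrak{A})/\Theta_{\mathrm{easy}}(\mathfrak{A})[p]\}$, into two sub-events based on whether the minimum-weight left\textendash right crossing of $\mathfrak{R}$ has weight above or below the threshold $w := 2uK\mathbf{E}\Psi_{\mathrm{hard}}(\mathfrak{A})$:
\[
\{M_{\mathrm{LR};S}(\mathfrak{R})\ge M\} \subseteq \{\Psi_{\mathrm{LR}}(\mathfrak{R})\ge w\} \cup \bigl(\{M_{\mathrm{LR};S}(\mathfrak{R})\ge M\}\cap\{\Psi_{\mathrm{LR}}(\mathfrak{R})<w\}\bigr).
\]
Choosing $C_{\mathrm{PD}} := 1/c_{\mathrm{PD}}$, where $c_{\mathrm{PD}}$ is the constant from \propref{longpathpasses}, ensures that $c_{\mathrm{PD}} M\ge K$ always, and a direct computation gives the key numerical identity $w = \tfrac{c_{\mathrm{PD}} M}{2u_{0}}\Theta_{\mathrm{easy}}(\mathfrak{A})[p]$ in the nontrivial regime, which is precisely what makes the two halves of the argument line up.

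For the first sub-event, \propref{tailbound} directly yields $\mathbf{P}[\Psi_{\mathrm{LR}}(\mathfrak{R})\ge w]\le u^{-L/3}+o_{K,L}(1)$, which contributes the first term of the claimed bound. For the second sub-event, on the event in question the minimum-weight left\textendash right crossing $\pi_{\mathrm{LR}}$ visits at least $M$ dyadic $S\times S$ subboxes of $\mathfrak{R}$, so by \propref{longpathpasses} it has at least $c_{\mathrm{PD}} M$ passes; hence this sub-event is contained in the event that there exists a left\textendash right crossing $\pi$ with $|\mathcal{P}(\pi)|\ge c_{\mathrm{PD}} M$ and $\psi(\pi;Y_{\mathfrak{R}})\le \tfrac{c_{\mathrm{PD}} M}{2u_{0}}\Theta_{\mathrm{easy}}(\mathfrak{A})[p]$.

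Applying the second form of \propref{lowerbound-tail} with $N = c_{\mathrm{PD}} M$, restricted to left\textendash right crossings with at least $N$ passes (the proof of that form goes through verbatim in this restricted setting, since $\mathcal{P}_{N}(\pi)$ is well-defined for such $\pi$), bounds the second sub-event's probability by $2L(2d_{\mathrm{pass}}^{2}\sqrt{p})^{c_{\mathrm{PD}} M}+o_{K,L}(1)\le 2L(2d_{\mathrm{pass}}^{2}\sqrt{p})^{K}+o_{K,L}(1)$, where we used $c_{\mathrm{PD}} M\ge K$ together with $2d_{\mathrm{pass}}^{2}\sqrt{p}<1$ (which holds for $p$ sufficiently small). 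Summing the two pieces gives the claim.

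The main obstacle here is bookkeeping rather than conceptual: one must arrange the constant $C_{\mathrm{PD}}$ so that both the pass count $c_{\mathrm{PD}} M$ reaches $K$ (to produce the correct exponent $K$ in the final bound) and $w$ matches the threshold appearing in \propref{lowerbound-tail} on the nose. Everything else follows mechanically from combining the upper tail of the crossing weight (\propref{tailbound}) with the pass-based lower tail on the crossing weight (\propref{lowerbound-tail}).
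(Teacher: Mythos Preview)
Your approach is essentially the same as the paper's: split on whether $\Psi_{\mathrm{LR}}(\mathfrak{R})$ exceeds the threshold $w=2uK\mathbf{E}\Psi_{\mathrm{hard}}(\mathfrak{A})$, handle the upper-tail piece via \propref{tailbound}, and handle the remaining piece by observing that a long geodesic forces many passes and invoking \propref{lowerbound-tail}. The logic and the constants line up exactly with the paper's argument.

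There is, however, a bookkeeping slip. You assert that ``$c_{\mathrm{PD}} M\ge K$ always,'' and you use this at the end to pass from the exponent $c_{\mathrm{PD}} M$ to the exponent $K$. This is not correct. From $M=K\max\{1,\,4uu_{0}C_{\mathrm{PD}}\mathbf{E}\Psi_{\mathrm{hard}}(\mathfrak{A})/\Theta_{\mathrm{easy}}(\mathfrak{A})[p]\}$ and $C_{\mathrm{PD}}=1/c_{\mathrm{PD}}$ one only gets $M\ge K$, hence $c_{\mathrm{PD}} M\ge c_{\mathrm{PD}} K$; in the regime where the max equals $1$ this is strictly less than $K$, and even in the ``nontrivial'' regime the condition $4uu_{0}C_{\mathrm{PD}}\cdot(\text{ratio})\ge 1$ only yields $4uu_{0}\cdot(\text{ratio})\ge c_{\mathrm{PD}}$, so again $c_{\mathrm{PD}} M\ge c_{\mathrm{PD}} K$ but not $\ge K$ in general. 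Consequently the inequality $(2d_{\mathrm{pass}}^{2}\sqrt{p})^{c_{\mathrm{PD}} M}\le(2d_{\mathrm{pass}}^{2}\sqrt{p})^{K}$ is not justified.

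The paper avoids this by working the other way around: it first fixes the pass count $N=K\max\{1,\,4u_{0}u\,\mathbf{E}\Psi_{\mathrm{hard}}(\mathfrak{A})/\Theta_{\mathrm{easy}}(\mathfrak{A})[p]\}$ (with no $C_{\mathrm{PD}}$ inside), so that $N\ge K$ is immediate, and then reads off the box-count bound $M_{\mathrm{LR};S}(\mathfrak{R})\le C_{\mathrm{PD}} N$ from \propref{longpathpasses}. If you reorganize your argument this way the exponent $K$ comes out for free. (There is a harmless mismatch between the paper's statement and its proof in the trivial regime, where $C_{\mathrm{PD}}N=C_{\mathrm{PD}}K$ rather than $K$, but this does not affect any downstream use of the proposition.)
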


\begin{proof}
By \propref{tailbound}, with probability at least $1-u^{-L/3}-o_{K,L}(1),$
we have
\[
\Psi_{\mathrm{LR}}(\mathfrak{R})\le2uK\mathbf{E}\Psi_{\mathrm{hard}}(\mathfrak{A}).
\]
On the other hand, by \propref{lowerbound-tail} and \propref{longpathpasses},
with probability at least $1-C_{\mathrm{p}}L\left(2d_{\mathrm{p}}^{2}\sqrt{p}\right)^{N}-o_{K,L}(1)$
we have
\[
\min_{\|\pi\|_{S}\ge c_{\mathrm{PD}}N}\psi(\pi;Y_{\mathfrak{R}})>\frac{N}{2u_{0}}\Theta_{\mathrm{easy}}(\mathfrak{A})[p].
\]
Thus if
\[
\frac{N}{2u_{0}}\Theta_{\mathrm{easy}}(\mathfrak{A})[p]\ge2uK\mathbf{E}\Psi_{\mathrm{hard}}(\mathfrak{A}),
\]
then with probability at least $1-u^{-L/3}-C_{\mathrm{p}}L\left(2d_{\mathrm{p}}^{2}\sqrt{p}\right)^{N}-o_{K,L}(1),$
we have $M_{\mathrm{LR};S}(\mathfrak{R})\le c_{\mathrm{PD}}N.$ Putting
\[
N=K\max\left\{ 1,4u_{0}u\frac{\mathbf{E}\Psi_{\mathrm{hard}}(\mathfrak{A})}{\Theta_{\mathrm{easy}}(\mathfrak{A})[p]}\right\} 
\]
yields the desired result.
\end{proof}
\begin{prop}
\label{prop:crossinglengthexp}There is a $\delta_{0}>0$ and a $C_{\mathrm{CL}}>0$
so that the following holds. If $\CV^{2}(\Psi_{\mathrm{easy}}(\mathfrak{E}))<\delta_{\mathrm{RSW}}$
whenever $\mathfrak{E}\subseteq[0,S)\times[0,2S)$ has aspect ratio
between $1/2$ and $2$ inclusive, and $\CV^{2}(\Psi_{\mathrm{hard}}(\mathfrak{A}))<\delta<\delta_{0}$,
then we have
\[
\mathbf{E}M_{\mathrm{LR};S}(\mathfrak{R})\le K\left(C_{\mathrm{CL}}+L\left[2^{-L/3}+C_{\mathrm{p}}L\left(2d_{\mathrm{p}}^{2}\sqrt{p_{\mathrm{RSW}}}\right)^{K}\right]\right)+o_{K,L}(1).
\]
\end{prop}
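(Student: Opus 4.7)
The plan is to integrate the tail bound from the unnumbered proposition immediately above, applied with $p = p_{\mathrm{RSW}}$. To do this cleanly, the first task is to simplify that bound by showing that under the current hypotheses the ratio $\mathbf{E}\Psi_{\mathrm{hard}}(\mathfrak{A})/\Theta_{\mathrm{easy}}(\mathfrak{A})[p_{\mathrm{RSW}}]$ inside the $\max$ is bounded by an absolute constant $C_0$, which yields, for some $c > 0$ and all $u \ge 1$,
\[
\mathbf{P}[M_{\mathrm{LR};S}(\mathfrak{R}) \ge cuK] \le u^{-L/3} + 2L(2d_{\mathrm{pass}}^2\sqrt{p_{\mathrm{RSW}}})^K + o_{K,L}(1).
\]

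The ratio bound combines two ingredients. The hypothesis $\CV^2(\Psi_{\mathrm{hard}}(\mathfrak{A})) < \delta < \delta_0$ invokes the concentration lemma from Section~3 to give $\mathbf{E}\Psi_{\mathrm{hard}}(\mathfrak{A}) \le B\cdot\Theta_{\mathrm{hard}}(\mathfrak{A})[p_{\mathrm{RSW}}]$ for some $B = B(\delta_0, p_{\mathrm{RSW}})$; meanwhile, the hypothesis $\CV^2(\Psi_{\mathrm{easy}}(\mathfrak{E})) < \delta_{\mathrm{RSW}}$ for all subboxes $\mathfrak{E} \subseteq [0,S)\times[0,2S)$ of aspect ratio between $1/2$ and $2$ is exactly what is needed to invoke \thmref{rsw} on $\mathfrak{A}$, producing $\Theta_{\mathrm{hard}}(\mathfrak{A})[p_{\mathrm{RSW}}] \le C_{\mathrm{RSW}}\Theta_{\mathrm{easy}}(\mathfrak{A})[p_{\mathrm{RSW}}]$. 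Composing yields the ratio bound with $C_0 = BC_{\mathrm{RSW}}$, provided we choose $\delta_0 \le \delta_{\mathrm{RSW}} \wedge p_{\mathrm{RSW}}$.

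Given the cleaner tail bound, the final step is elementary: write
\[
\mathbf{E}M_{\mathrm{LR};S}(\mathfrak{R}) = \int_0^\infty \mathbf{P}[M_{\mathrm{LR};S}(\mathfrak{R}) \ge t]\,dt,
\]
use the deterministic bound $M_{\mathrm{LR};S}(\mathfrak{R}) \le KL$ to truncate at $t = KL$, estimate the integrand by $1$ on $[0,cK]$ to produce the leading $cK$ contribution, and change variables $u = t/(cK)$ on $[cK, KL]$ to integrate the simplified tail bound. The power-law term $u^{-L/3}$ integrated over $[1,\infty)$ contributes $O(K/L)$, while the $2L(\cdot)^K$ tail term and the $o_{K,L}(1)$ error are amplified by the length of the integration range. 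Collecting these and rearranging by absorbing polynomial factors of $K$ and $L$ into the exponentially small prefactor $(2d_{\mathrm{pass}}^2 \sqrt{p_{\mathrm{RSW}}})^K$ gives a bound of the stated form, with $C_{\mathrm{CL}}$ chosen large enough to dominate the $O(K/L)$ remainder. The main obstacle is less the integration itself than verifying the ratio bound: this is the first point in the proof where both the RSW theorem and the induction hypothesis on $\CV^2$ of hard crossings must be simultaneously available, and the numerical choice of $\delta_0$ is forced by the interplay of these two tools.
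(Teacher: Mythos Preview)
Your approach is correct and shares the paper's key step: bounding the ratio $\mathbf{E}\Psi_{\mathrm{hard}}(\mathfrak{A})/\Theta_{\mathrm{easy}}(\mathfrak{A})[p_{\mathrm{RSW}}]$ by a constant via \thmref{rsw} together with \eqref{concentration-main} applied under the $\CV^2$ hypothesis on $\Psi_{\mathrm{hard}}(\mathfrak{A})$. That is exactly what the paper does.

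Where you diverge is in extracting the expectation from the tail bound. The paper does not integrate: it simply applies the preceding proposition at the single value $u=2$ and uses the crude inequality $\mathbf{E}M \le a + M_{\max}\cdot\mathbf{P}[M\ge a]$ with $M_{\max}\le K^2$ (implicitly $L\asymp K$). This immediately yields
\[
\mathbf{E}M_{\mathrm{LR};S}(\mathfrak{R})\le K\max\{1,\,8u_{0}C_{\mathrm{PD}}C_{\mathrm{RSW}}\cdot(\text{ratio})\}+K^{2}\bigl[2^{-L/3}+2L(2d_{\mathrm{pass}}^{2}\sqrt{p_{\mathrm{RSW}}})^{K}\bigr]+o_{K,L}(1),
\]
and the $2^{-L/3}$ term in the statement is simply the remnant of this particular choice $u=2$. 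Your integration over $u$ replaces $K^{2}\cdot 2^{-L/3}$ by an $O(K/L)$ contribution (absorbed into $C_{\mathrm{CL}}K$) and produces $KL^{2}$ rather than $K^{2}L$ in front of the exponentially small term; these match the stated form only after assuming $L\asymp K$ or after the ``absorbing polynomial factors'' step you mention, which slightly alters the displayed constants. The paper's one-shot evaluation is shorter and reproduces the stated expression verbatim, but your route is equally valid and arguably gives a cleaner leading term.
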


\begin{rem}
Note that \eqref{prswsmall} implies that the third term decays geometrically
as $K\to\infty$.
\end{rem}

\begin{proof}
Putting $p=p_{\mathrm{RSW}}$ in the previous lemma, we have, for
any $u>0$,
\begin{align*}
\mathbf{E}M_{\mathrm{LR};S}(\mathfrak{R}) & \le K\max\left\{ 1,4u_{0}uc_{\mathrm{PD}}\frac{\mathbf{E}\Psi_{\mathrm{hard}}(\mathfrak{A})}{\Theta_{\mathrm{easy}}(\mathfrak{A})[p_{\mathrm{RSW}}]}\right\} \\
 & \qquad+KL\left[u^{-L/3}+C_{\mathrm{p}}L\left(2d_{\mathrm{\mathrm{p}}}^{2}\sqrt{p_{\mathrm{RSW}}}\right)^{K}\right]+o_{K,L}(1).
\end{align*}
Then, since our assumption implies that the hypothesis of \thmref{rsw}
holds at scale $S$, putting $u=2$ we obtain
\begin{align*}
\mathbf{E}M_{\mathrm{LR};S}(\mathfrak{R}) & \le K\max\left\{ 1,8u_{0}c_{\mathrm{PD}}\frac{\mathbf{E}\Psi_{\mathrm{hard}}(\mathfrak{A})}{\Theta_{\mathrm{easy}}(\mathfrak{A})[p_{\mathrm{RSW}}]}\right\} \\
 & \qquad+KL\left[2^{-L/3}+C_{\mathrm{p}}L\left(2d_{\mathrm{p}}^{2}\sqrt{p_{\mathrm{RSW}}}\right)^{K}\right]+o_{K,L}(1)\\
 & \le K\max\left\{ 1,8u_{0}c_{\mathrm{PD}}C_{\mathrm{RSW}}\frac{\mathbf{E}\Psi_{\mathrm{hard}}(\mathfrak{A})}{\Theta_{\mathrm{hard}}(\mathfrak{A})[p_{\mathrm{RSW}}]}\right\} \\
 & \qquad+KL\left[2^{-L/3}+C_{\mathrm{p}}L\left(2d_{\mathrm{p}}^{2}\sqrt{p_{\mathrm{RSW}}}\right)^{K}\right]+o_{K,L}(1).
\end{align*}
Finally, using the assumption that $\CV^{2}(\Psi_{\mathrm{hard}}(\mathfrak{A}))<\delta$,
if $\delta$ is chosen sufficiently small compared to $p_{\mathrm{RSW}}$,
Chebyshev's inequality (or \eqref{concentration-main}) implies the
result.
\end{proof}

\subsection{Diameter upper bound\label{subsec:diameter}}

We now turn our attention to the problem of estimating the point-to-point
distance between two points in a box, using a chaining argument to
take advantage of our good tail bound established in \propref{tailbound}.

Fix a scale $S=2^{s}$. Let $\mathfrak{R}=[0,S)\times[0,2S)$. For
$t\in[0,s]$ and $(i,j)\in[0,2^{t})^{2}$, put 
\[
\mathfrak{R}_{t;i,j}=\begin{cases}
(i\cdot2^{s-t},2\cdot j\cdot2^{s-t})+[0,2^{s-t})\times[0,2\cdot2^{s-t}) & \text{\ensuremath{t} even}\\
(2\cdot i\cdot2^{s-t},j\cdot2^{s-t})+[0,2\cdot2^{s-t})\times[0,2^{s-t}) & \text{\ensuremath{t} odd.}
\end{cases}
\]
For convenience, put $\mathfrak{A}_{t}=\mathfrak{R}_{t;0,0}$.
\begin{figure}[t]
\begin{centering}
\hfill{}\subfloat[\label{fig:pointtopoint}Using two hard crossings at each scale to
connect to any two points.]{\begin{centering}
\tiny
\begin{tikzpicture}[x=0.35in,y=0.35in]
\draw[step=1,thin] (0,0) grid (4,8);
\draw[fill=yellow,opacity=0.1] (0,0) rectangle (4,8);
\draw[fill=yellow,opacity=0.2] (0,0) rectangle (4,2);
\draw[fill=yellow,opacity=0.4] (0,0) rectangle (1,2);
\draw[fill=yellow,opacity=0.4] (2,4) rectangle (3,6);
\draw[fill=yellow,opacity=0.2] (0,4) rectangle (4,6);
\draw[ultra thick, red,style={decorate,decoration={snake,amplitude=4}}] (2.1,0) -- (1.9,8);
\draw[very thick, red,style={decorate,decoration={snake,amplitude=2}}] (0,1.1) -- (4,0.9);
\draw[very thick, red,style={decorate,decoration={snake,amplitude=2}}] (0,5.1) -- (4,4.9);
\draw[very thick, red,style={decorate,decoration={snake,amplitude=2}}] (0,5.1) -- (4,4.9);
\draw[thick, red,style={decorate,decoration={snake,amplitude=1}}] (0.4,0) -- (0.5,2);
\draw[thick, red,style={decorate,decoration={snake,amplitude=1}}] (2.5,4) -- (2.5,6);
\filldraw [blue] (2.5,5.5) circle (2.5pt);
\filldraw [blue] (0.5,1.5) circle (2.5pt);
\end{tikzpicture}
\par\end{centering}
}\hfill{}\subfloat[\label{fig:chaining}The chaining argument takes the maximum of the
hard crossings at each scale.]{\begin{centering}
\tiny
\begin{tikzpicture}[x=0.35in,y=0.35in]
\draw[step=1,thin] (0,0) grid (4,8);
\draw[ultra thick, red,style={decorate,decoration={snake,amplitude=4}}] (2,0) -- (2,8);
\draw[very thick, red,style={decorate,decoration={snake,amplitude=2}}] (0,1) -- (4,1);
\draw[very thick, red,style={decorate,decoration={snake,amplitude=2}}] (0,3) -- (4,3);
\draw[very thick, red,style={decorate,decoration={snake,amplitude=2}}] (0,5) -- (4,5);
\draw[very thick, red,style={decorate,decoration={snake,amplitude=2}}] (0,7) -- (4,7);
\draw[thick, red,style={decorate,decoration={snake,amplitude=1}}] (0.4,0) -- (0.5,2);
\draw[thick, red,style={decorate,decoration={snake,amplitude=1}}] (1.4,0) -- (1.5,2);
\draw[thick, red,style={decorate,decoration={snake,amplitude=1}}] (2.4,0) -- (2.5,2);
\draw[thick, red,style={decorate,decoration={snake,amplitude=1}}] (3.4,0) -- (3.5,2);
\draw[thick, red,style={decorate,decoration={snake,amplitude=1}}] (0.4,2) -- (0.5,4);
\draw[thick, red,style={decorate,decoration={snake,amplitude=1}}] (1.4,2) -- (1.5,4);
\draw[thick, red,style={decorate,decoration={snake,amplitude=1}}] (2.4,2) -- (2.5,4);
\draw[thick, red,style={decorate,decoration={snake,amplitude=1}}] (3.4,2) -- (3.5,4);
\draw[thick, red,style={decorate,decoration={snake,amplitude=1}}] (0.4,4) -- (0.5,6);
\draw[thick, red,style={decorate,decoration={snake,amplitude=1}}] (1.4,4) -- (1.5,6);
\draw[thick, red,style={decorate,decoration={snake,amplitude=1}}] (2.4,4) -- (2.5,6);
\draw[thick, red,style={decorate,decoration={snake,amplitude=1}}] (3.4,4) -- (3.5,6);
\draw[thick, red,style={decorate,decoration={snake,amplitude=1}}] (0.4,6) -- (0.5,8);
\draw[thick, red,style={decorate,decoration={snake,amplitude=1}}] (1.4,6) -- (1.5,8);
\draw[thick, red,style={decorate,decoration={snake,amplitude=1}}] (2.4,6) -- (2.5,8);
\draw[thick, red,style={decorate,decoration={snake,amplitude=1}}] (3.4,6) -- (3.5,8);
\end{tikzpicture}
\par\end{centering}
}\hfill{}
\par\end{centering}
\caption{}
\end{figure}

\begin{prop}
\label{prop:diamtail}There is a $\delta=\delta_{\diam}>0$ and $C_{\diam}<\infty$,
independent of the scale $S$, so that the following holds. If 
\begin{equation}
\CV^{2}(\Psi_{\mathrm{hard}}(\mathfrak{A}_{t}))<\delta\label{eq:smallcvdiam}
\end{equation}
for all $t\ge0$, and
\begin{equation}
\CV^{2}(\Psi_{\mathrm{easy}}(\mathfrak{A}))<\delta_{\mathrm{RSW}}\label{eq:smallcvdiam-1}
\end{equation}
for all $\mathfrak{A}\subseteq\mathfrak{R}$ of aspect ratio between
$1/2$ and $2$, inclusive, then, for any $\alpha\in\mathbf{N}$ we
have a $C(\alpha)\ge0$ so that, as long as $\gamma$ is sufficiently
small and $u$ is sufficiently large (both compared to $\alpha$),
\[
\mathbf{P}\left(\Psi_{\max}(\mathfrak{R})\ge u\Theta_{\mathrm{easy}}(\mathfrak{R})[q_{\mathrm{pl}}]\right)\le C_{}(\alpha)u^{-\alpha}.
\]
\end{prop}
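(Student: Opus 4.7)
The plan is a multi-scale chaining argument, as illustrated in Figure~\ref{fig:chaining}. For each scale $t \in \{0,\ldots,s\}$, put
\[
H_t = \max_{(i,j)\in[0,2^t)^2}\Psi_{\mathrm{hard}}(\mathfrak{R}_{t;i,j}; Y_{\mathfrak{R}}).
\]
The crucial geometric fact is that $\mathfrak{R}_{t;i,j}$ is portrait when $t$ is even and landscape when $t$ is odd, so that the orientation of its hard crossing alternates with $t$. In particular, the hard crossing of a scale-$t$ box is perpendicular to, and therefore intersects, the hard crossing of any scale-$(t+1)$ subbox it contains. Chaining these intersections, I would first establish the almost-sure bound $\Psi_{\max}(\mathfrak{R}) \le C\sum_{t=0}^{s}H_t$: any two points $x,y \in \mathfrak{R}$ are joined by connecting each to the finest-scale (constant-size) hard crossing containing it and then telescoping through successively coarser scales until the two chains meet.

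The second step is to tail-bound each $H_t$. Fix $\alpha \ge 1$ and choose $r = r(\alpha) \in \mathbf{N}$ large enough (depending only on $\alpha$ and on $a_{\mathrm{pl}}$). For $t \le s - r$, tile $\mathfrak{R}_{t;i,j}$ by $\mathfrak{A}$-rectangles of side $2^{s-t-r}$ (so $K = L = 2^r$ in the notation of \propref{tailbound}) and apply~\eqref{eq:longtailindep}; a union bound over the $4^t$ choices of $(i,j)$ yields, for $v \ge u_0$,
\[
\mathbf{P}\!\left[H_t \ge 2^{r+2}\,v\cdot\mathbf{E}\Psi_{\mathrm{hard}}(\mathfrak{A}^{(r)}_t)\right] \le 4^t\!\left(v^{-2^{r-2}} + \exp(-\Omega_r((\log v)^2))\right),
\]
where $\mathfrak{A}^{(r)}_t = [0,2^{s-t-r})\times[0,2^{s-t-r+1})$. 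The $r$ remaining finest scales $s-r<t\le s$ are handled directly by a Gaussian tail bound on $\max_{x \in \mathfrak{R}}\exp(\gamma Y_{\mathfrak{R}}(x))$, since $\mathfrak{R}_{t;i,j}$ is then of constant size. To convert $\mathbf{E}\Psi_{\mathrm{hard}}(\mathfrak{A}^{(r)}_t)$ into a multiple of $\Theta_{\mathrm{easy}}(\mathfrak{R})[q_{\mathrm{pl}}]$, I chain three ingredients: (i)~\eqref{eq:smallcvdiam} together with~\eqref{eq:concentration-main} exchanges the expectation for $\Theta_{\mathrm{hard}}(\mathfrak{A}^{(r)}_t)[p_{\mathrm{RSW}}]$; (ii)~\thmref{rsw}, applicable by~\eqref{eq:smallcvdiam-1}, replaces this by a constant multiple of $\Theta_{\mathrm{easy}}(\mathfrak{A}^{(r)}_t)[p_{\mathrm{RSW}}]$; (iii)~\propref{exponential-crossings}, together with~\eqref{eq:concentration-quantiles} to match quantile levels, bounds this by $C\,a_{\mathrm{pl}}^{t+r}\,\Theta_{\mathrm{easy}}(\mathfrak{R})[q_{\mathrm{pl}}]$.

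To combine the ingredients, I would choose a per-scale threshold $v_t$ growing slowly with $t$ (e.g.\ $v_t = u\cdot 2^{ct/2^{r-2}}$ for a suitable constant $c$) so that the union-bound failure probabilities $\sum_t 4^t v_t^{-2^{r-2}}$ sum to $O(u^{-\alpha})$, while on the complementary good event the chaining bound reads
\[
\Psi_{\max}(\mathfrak{R}) \le C\sum_{t=0}^{s} 2^{r+2}v_t\cdot C\,a_{\mathrm{pl}}^{t+r}\,\Theta_{\mathrm{easy}}(\mathfrak{R})[q_{\mathrm{pl}}],
\]
which remains a convergent geometric series in $t$, bounded by $C'u\,\Theta_{\mathrm{easy}}(\mathfrak{R})[q_{\mathrm{pl}}]$. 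The main obstacle is exactly this trade-off: the tail exponent $2^{r-2}$ must simultaneously beat the $4^t$ union-bound prefactor \emph{and} accommodate the growth of $v_t$, while $v_t$ must grow slowly enough that the geometric series in $a_{\mathrm{pl}}^t$ still converges. These constraints are compatible provided $r$ is chosen large enough as a function of $\alpha$ and $a_{\mathrm{pl}}$, which in turn forces $\gamma$ to be small relative to $\alpha$ so that \propref{tailbound}, \thmref{rsw}, and \propref{exponential-crossings} all apply with the required constants.
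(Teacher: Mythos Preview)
Your approach is essentially the paper's: chain via $\Psi_{\max}(\mathfrak{R})\le 2\sum_t H_t$, tail-bound each $H_t$ using \propref{tailbound}, and convert $\mathbf{E}\Psi_{\mathrm{hard}}(\mathfrak{A}^{(r)}_t)$ into a multiple of $\Theta_{\mathrm{easy}}(\mathfrak{R})[q_{\mathrm{pl}}]$ through the chain \eqref{eq:smallcvdiam}$+$\eqref{eq:concentration-main}, then \thmref{rsw}, then \propref{exponential-crossings}. The paper's bookkeeping differs only cosmetically---it absorbs a factor $a_{\mathrm{pl}}^{-(t+l)/4}$ into the threshold rather than introducing your per-scale $v_t$---but the trade-off is identical.

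One point you gloss over, however, is not cosmetic. Your $H_t$ is defined with respect to the ambient field $Y_{\mathfrak{R}}$, whereas \eqref{eq:longtailindep} bounds $\Psi_{\mathrm{hard}}(\mathfrak{R}_{t;i,j})$ with respect to its \emph{own} GFF. Passing between the two incurs a coarse-field correction governed by \eqref{eq:tailgff}, and the relevant scale ratio is $2^t$, not $2^r$: the extra tail term is $\exp\bigl(-\Omega(1)(\log v)^2/t\bigr)$, not $\exp\bigl(-\Omega_r((\log v)^2)\bigr)$. After the $4^t$ union bound this term is summable only because the effective threshold grows linearly in $t$ (through your $v_t$, or the paper's $a_{\mathrm{pl}}^{-(t+l)/4}$) \emph{and} because the $\Omega(1)$ constant diverges as $\gamma\to 0$; this is precisely the second summand in the paper's display \eqref{eq:maxleveldiam}, and it is where the requirement ``$\gamma$ sufficiently small compared to $\alpha$'' actually bites. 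Once this correction is in place, your separate treatment of the finest $r$ scales is unnecessary, since the $a_{\mathrm{pl}}^{t}$ decay handles all scales uniformly.
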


\begin{proof}
Let $L\in\mathbf{N}$ be \emph{fixed} but chosen later. By our crossing
distance tail bound \eqref{longtailindep}, applied with $L=2^{l}$,
$K=2L$, and a union bound, for all $u\ge u_{0}$ we have
\begin{equation}
\mathbf{P}\left[\max_{(i,j)\in[0,2^{t})^{2}}\Psi_{\mathrm{hard}}(\mathfrak{R}_{t;i,j})\ge4uL\mathbf{E}\Psi_{\mathrm{hard}}(\mathfrak{A}_{t+l})\right]\le(1+o_{L}(1))\cdot4^{t}\cdot u^{-L/4},\label{eq:hardtail}
\end{equation}
Now we know that, if \eqref{smallcvdiam} holds and $\delta$ is sufficiently
small (compared to $p_{\mathrm{RSW}}$), then by \eqref{concentration-main},
\thmref{rsw} (noting the hypothesis \eqref{smallcvdiam-1}), and
\propref{exponential-crossings} (recalling \eqref{prswsmall}) there
is a constant $C_{1}$ (depending on $\delta$) so that we have
\begin{multline}
\mathbf{E}\Psi_{\mathrm{hard}}(\mathfrak{A}_{t+l})\le C_{1}\Theta_{\mathrm{hard}}(\mathfrak{A}_{t+l})[p_{\mathrm{RSW}}]\\
\le C_{1}C_{\mathrm{RSW}}\Theta_{\mathrm{easy}}(\mathfrak{A}_{t+l})[p_{\mathrm{RSW}}]\le C_{1}C_{\mathrm{pl}}C_{\mathrm{RSW}}a_{\mathrm{pl}}^{t+l}\Theta_{\mathrm{easy}}(\mathfrak{R})[q_{\mathrm{pl}}].\label{eq:diam-usehyps}
\end{multline}
Combining \eqref{hardtail} and \eqref{diam-usehyps} and putting
$C=C_{1}C_{\mathrm{pl}}C_{\mathrm{RSW}}$, we get
\begin{multline*}
\mathbf{P}\left[\max_{(i,j)\in[0,2^{t})^{2}}\Psi_{\mathrm{hard}}(\mathfrak{R}_{t;i,j})\ge4CuLa_{\mathrm{pl}}^{\frac{t+l}{2}}\Theta_{\mathrm{easy}}(\mathfrak{R})[q_{\mathrm{pl}}]\right]\\
\le(1+o_{L}(1))\cdot4^{t}\cdot u^{-L/4}\cdot a_{\mathrm{pl}}^{L(t+l)/8}.
\end{multline*}
Using \eqref{tailgff}, we derive
\begin{align*}
(1+\mathrlap{o_{L}(1))^{-1}\mathbf{P}\left[\max_{(i,j)\in[0,2^{t})^{2}}\Psi_{\mathrm{hard}}(\mathfrak{R}_{t;i,j};Y_{\mathfrak{R}})\ge8CuLa_{\mathrm{pl}}^{\frac{1}{4}(t+l)}\Theta_{\mathrm{easy}}(\mathfrak{R})[q_{\mathrm{pl}}]\right]}\\
 & \le4^{t}u^{-L/8}a_{\mathrm{pl}}^{L(t+l)/8}+4^{t}\exp\left(-\omega(1)\frac{\left(\log\big(ua_{\mathrm{pl}}^{-\frac{t+l}{4}}\big)\bigg)\right)^{2}}{\log4^{t}}\right)\\
 & \le u^{-L/8}\left[4^{t}a_{\mathrm{pl}}^{L(t+l)/8}+\exp\left(t\log4-\omega_{L}(1)\log u-\omega_{L}(1)\frac{(t+l)^{2}}{t}\right)\right].
\end{align*}
If we choose $L$ so large and $\gamma$ so small that the term is
brackets is summable in $t$, then we can conclude using a union bound
that
\begin{multline}
\mathbf{P}\left[(\exists t)\,\max_{(i,j)\in[0,2^{t})^{2}}\Psi_{\mathrm{hard}}(\mathfrak{R}_{t;i,j};Y_{\mathfrak{R}})\ge8CuLa_{\mathrm{pl}}^{\frac{1}{4}(t+l)}\Theta_{\mathrm{easy}}(\mathfrak{R})[q_{\mathrm{pl}}]\right]\\
=O_{L}(1)u^{-L/8}.\label{eq:maxleveldiam}
\end{multline}

Now for $x\in\mathfrak{R}$ and $t\in[0,s)$, let $\mathfrak{R}_{t}(x)$
be the $\mathfrak{R}_{t;i,j}$ containing $x$. Then
\[
\Psi_{x,y}(\mathfrak{R})\le\sum_{t\in[0,s]}\Psi_{\mathrm{hard}}(\mathfrak{R}_{t}(x);Y_{\mathfrak{R}})+\sum_{t\in[1,s]}\Psi_{\mathrm{hard}}(\mathfrak{R}_{t}(y);Y_{\mathfrak{R}}).
\]
(See \figref{pointtopoint}.) This means that
\begin{align*}
\Psi_{\max}(\mathfrak{R}) & \le2\sum_{t\in[0,s]}\max_{(i,j)\in[0,2^{t})^{2}}\Psi_{\mathrm{hard}}(\mathfrak{R}_{t;i,j};Y_{\mathfrak{R}});
\end{align*}
this is the chaining argument illustrated in \figref{chaining}. Applying
\eqref{maxleveldiam}, this implies
\[
\mathbf{P}\left[\Psi_{\max}(\mathfrak{R})\ge8CuL\Theta_{\mathrm{easy}}(\mathfrak{R})[q_{\mathrm{pl}}]\sum_{t=0}^{s}a_{\mathrm{pl}}^{\frac{1}{4}(t+l)}\right]\le O_{L}(1)u^{-L/8}.
\]
The sum is bounded so we obtain
\[
\mathbf{P}\left[\Psi_{\max}(\mathfrak{R})\ge u\Theta_{\mathrm{easy}}(\mathfrak{R})[q_{\mathrm{pl}}]\right]\le O_{L}(1)u^{-L/8},
\]
and the result follows since $L$ can be chosen to be arbitrarily
large.
\end{proof}

\section{Variation upper bounds\label{sec:variation}}

In this section we prove an inductive upper bound on the variance
of the crossing distance in a rectangle, which we combine with our
lower bounds on the expectation of the crossing distance in order
to prove \thmref{maintheorem}.

\subsection{Variance of the crossing distance}

Our goal in this section is to prove the following bound on the variance
of the left-right crossing distance of a rectangle. Let $\mathfrak{R}=[0,KS)\times[0,LS)$.
\begin{thm}
\label{thm:varbound-1}For any $\beta>0$, there is a $\delta=\delta_{\Var}>0$
and a constant $C_{\Var}<\infty$ so that if $S,K,L$ are sufficiently
large and $\gamma$ is sufficiently small (independent of the scale
$S$), and $\CV^{2}(\Psi_{\mathrm{easy}}(\mathfrak{A}))<\delta$ whenever
$\mathfrak{A}\subseteq[0,3S)^{2}$ has aspect ratio between $1/2$
and $2$ inclusive, then
\begin{multline}
(1-o_{K,L}(1))\Var(\Psi_{\mathrm{LR}}(\mathfrak{R}))-o_{K,L}(1)\left(\mathbf{E}\Psi_{\mathrm{LR}}(\mathfrak{R})\right)^{2}\\
\le C_{\Var}KL^{2/\beta}\left(\mathbf{E}\Psi_{\mathrm{easy}}([0,3S)^{2})\right)^{2}.\label{eq:varbound-1}
\end{multline}
\end{thm}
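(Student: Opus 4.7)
The plan is to apply a block Efron--Stein inequality using the current-flow representation of $Y$ (which makes the edge weights $\xi(e)$ independent), grouping the edges by the $KL$ subboxes $\mathfrak{C}_1,\ldots,\mathfrak{C}_{KL}$. This gives
$$\Var(\Psi_{\mathrm{LR}}(\mathfrak{R})) \le \tfrac{1}{2}\sum_{i=1}^{KL}\mathbf{E}[\Delta_i^2], \qquad \Delta_i := \Psi_{\mathrm{LR}}(\mathfrak{R};Y) - \Psi_{\mathrm{LR}}(\mathfrak{R};Y^{\mathfrak{C}_i}).$$
I will split each summand according to the event $A_i$ that at least one of the two geodesics (under $Y$ or under $Y^{\mathfrak{C}_i}$) enters the blow-up $\overline{\mathfrak{C}_i}$.

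On $A_i$, I will use a detour construction: replacing the longer geodesic's segment inside $\overline{\mathfrak{C}_i}$ with a minimum-weight connecting path across $\overline{\mathfrak{C}_i}$ yields, modulo the course-field correction from~\eqref{relatescale}, $|\Delta_i| \le \Psi_{\max}(\overline{\mathfrak{C}_i};Y) + \Psi_{\max}(\overline{\mathfrak{C}_i};Y^{\mathfrak{C}_i})$. Applying H\"older with exponents $\beta/2$ and $\beta/(\beta-2)$ to each term (assuming $\beta > 2$; the case $\beta \le 2$ follows from a larger choice), then a second H\"older on the outer sum, gives
$$\sum_i\mathbf{E}[\Delta_i^2\mathbf{1}_{A_i}] \le C \max_i\bigl(\mathbf{E}\Psi_{\max}(\overline{\mathfrak{C}_i})^\beta\bigr)^{2/\beta} \cdot (KL)^{2/\beta}\Bigl(\sum_i\mathbf{P}(A_i)\Bigr)^{(\beta-2)/\beta}.$$

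The three factors in this bound are each supplied by the earlier sections. First, taking the tail parameter $\alpha>\beta$ in \propref{diamtail} (applied at scale $S$ on each $3S\times 3S$ box $\overline{\mathfrak{C}_i}$) yields $\mathbf{E}\Psi_{\max}(\overline{\mathfrak{C}_i})^\beta \le C(\beta)(\mathbf{E}\Theta_{\mathrm{easy}}([0,3S)^2))^\beta$, using~\eqref{concentration-main} and the hypothesis \eqref{cvbound-1} to trade the quantile $\Theta_{\mathrm{easy}}[q_{\mathrm{pl}}]$ for its expectation. Second, since a geodesic entering the blow-up $\overline{\mathfrak{C}_i}$ must enter one of the nine $\mathfrak{C}_j$'s it covers, $\sum_i\mathbf{P}(A_i) \le 9\,\mathbf{E} M_{\mathrm{LR};S}(\mathfrak{R})$, so \propref{crossinglengthexp} gives $\sum_i\mathbf{P}(A_i)=O(K)$ once $K,L$ are large. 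The arithmetic $(KL)^{2/\beta}\cdot K^{(\beta-2)/\beta} = K\cdot L^{2/\beta}$ then recovers the right-hand side of~\eqref{varbound-1}.

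The main obstacle will be the complementary sum $\sum_i\mathbf{E}[\Delta_i^2\mathbf{1}_{A_i^c}]$. The subtlety is that the resampling $Y^{\mathfrak{C}_i}$ perturbs the field at \emph{every} point of $\mathfrak{R}$, not just inside $\mathfrak{C}_i$, since $Y(x)-Y^{\mathfrak{C}_i}(x) = \sum_{e\in\mathfrak{C}_i}i_x(e)(\xi(e)-\xi'(e))$ is a centered Gaussian with variance $\sum_{e\in\mathfrak{C}_i}i_x(e)^2$. I will need to show that after summing across all $KL$ blocks, these perturbations contribute only $o_{K,L}(1)\cdot\mathbf{E}[\Psi_{\mathrm{LR}}(\mathfrak{R})^2]$ in total, by exploiting the polynomial decay of $\sum_{e\in\mathfrak{C}_i}i_x(e)^2$ in $\mathrm{dist}(x,\mathfrak{C}_i)/S$ (a standard Green-function estimate). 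Splitting $\mathbf{E}[\Psi_{\mathrm{LR}}^2]=\Var(\Psi_{\mathrm{LR}})+(\mathbf{E}\Psi_{\mathrm{LR}})^2$ will then supply precisely the $(1-o_{K,L}(1))$ coefficient on $\Var$ and the $-o_{K,L}(1)(\mathbf{E}\Psi_{\mathrm{LR}})^2$ correction appearing on the left side of~\eqref{varbound-1}.
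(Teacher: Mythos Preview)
Your proposal follows the same architecture as the paper: block Efron--Stein on the $KL$ subboxes $\mathfrak{C}_i$, a split according to whether the geodesic enters the blow-up $\mathfrak{D}_i=\overline{\mathfrak{C}_i}\cap\mathfrak{R}$, the diameter tail bound (\propref{diamtail}) for the ``enters'' piece, and \propref{crossinglengthexp} to control $\sum_i\mathbf{P}(E_i)$. Two packaging differences are worth noting. First, the paper exploits the exchangeability of $(Y,Y^{\mathfrak{C}_i})$ to write $\mathbf{E}\Delta_i^2=2\mathbf{E}[(0\vee(\Psi^{\mathfrak{C}_i}-\Psi))^2]$, so only the $Y$-geodesic must be detoured and only the single event $E_i=\{\pi_{\mathrm{LR}}(\mathfrak{R};Y)\cap\mathfrak{D}_i\ne\emptyset\}$ appears; this avoids tracking both geodesics as you do. Second, rather than H\"older, the paper truncates: $\Psi_\partial^2\mathbf{1}_{E_i}\le \Psi_\partial^2\mathbf{1}\{\Psi_\partial\ge u\Theta\}+u^2\Theta^2\mathbf{1}_{E_i}$, bounds the first piece by the $u^{-\alpha}$ tail and the second by $u^2\Theta^2\,\mathbf{E}M_{\mathrm{LR};S}$, and sets $u=L^{1/\beta}$. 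Your H\"older route produces the same $K\cdot L^{2/\beta}$ arithmetic.

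The one point where your plan diverges is the far-away term. You propose to sum over $i$ using polynomial decay of $\sum_{e\in\mathfrak{C}_i}i_x(e)^2$ in $\mathrm{dist}(x,\mathfrak{C}_i)/S$. The paper does \emph{not} use any decay: it proves only the uniform bound $\sum_{e\in\mathfrak{C}_i}i_x(e)^2\le 8$ for $x\notin\mathfrak{D}_i$ (from $|i_x(e)|\le 2/S$), so that the resampling field $\Xi(x,i)$ has variance $\le 8$ and hence $\sum_{x\in\pi_1}e^{\gamma Y(x)}(1-e^{\gamma\Xi(x,i)})$ is $o_{K,L}(1)\cdot\Psi_{\mathrm{LR}}(\mathfrak{R})$ for each $i$ separately; since $K,L$ are held fixed as $\gamma\to 0$, summing $KL$ such squares still gives the $o_{K,L}(1)\,\mathbf{E}\Psi_{\mathrm{LR}}^2$ correction. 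Note also that this correction is the resampling perturbation $\Xi$, not the coarse-field correction of \eqref{relatescale} that you cite; the two are different objects, though both have bounded variance.
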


The proof of \thmref{varbound-1} will be be based on the following
standard Efron–Stein inequality\cite{Ste86}, which we quote here
for reference.
\begin{thm}[Efron–Stein]
\label{thm:efronstein}Let $X_{1},\ldots,X_{r},X_{1}',\ldots,X_{r}'$
be independent random variables so that $X_{j}$ and $X_{j}'$ are
identically distributed for each $j$, and $f:\mathbf{R}^{r}\to\mathbf{R}$.
Then 
\begin{multline*}
\Var(f(X_{1},\ldots,X_{r}))\\
\le\frac{1}{2}\sum_{j=1}^{r}\mathbf{E}\left(f(X_{1},\ldots,X_{r})-f(X_{1},\ldots,X_{j-1},X_{j}',X_{j+1},\ldots,X_{r})\right)^{2}.
\end{multline*}
\end{thm}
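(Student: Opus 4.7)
The plan is a classical martingale argument supplemented by the Hoeffding (ANOVA) decomposition of $V = f(X_1,\ldots,X_r)$. I introduce the Doob martingale $V_j := \mathbf{E}[V \mid X_1,\ldots,X_j]$ for $j = 0,\ldots,r$, with $V_0 = \mathbf{E}V$ and $V_r = V$. Its increments $\Delta_j := V_j - V_{j-1}$ are $L^2$-orthogonal, so $\Var(V) = \sum_{j=1}^r \mathbf{E}\Delta_j^2$. The task is then to bound each $\mathbf{E}\Delta_j^2$ by the conditional variance $\mathbf{E}\Var(V \mid X_{-j})$, where $X_{-j} := (X_i)_{i \ne j}$; this, combined with the symmetrization identity below, finishes the proof.

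For the bound $\mathbf{E}\Delta_j^2 \le \mathbf{E}\Var(V \mid X_{-j})$, I use the Hoeffding decomposition: for each $S \subseteq \{1,\ldots,r\}$ set $P_S V := \mathbf{E}[V \mid X_i : i \in S]$ and define the projections $E_S := \sum_{T \subseteq S}(-1)^{|S|-|T|} P_T$ by M\"obius inversion. Independence of the $X_i$'s is precisely what makes the $E_S V$ pairwise $L^2$-orthogonal, and one then reads off $V_j - V_{j-1} = \sum_{S : \max S = j} E_S V$ while $\mathbf{E}\Var(V \mid X_{-j}) = \|V\|_2^2 - \|P_{[r] \setminus \{j\}} V\|_2^2 = \sum_{S \ni j} \|E_S V\|_2^2$. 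The former index set is contained in the latter, so the desired inequality drops out; summing over $j$ gives $\sum_j \mathbf{E}\Var(V \mid X_{-j}) = \sum_{S \ne \emptyset} |S| \cdot \|E_S V\|_2^2 \ge \Var(V)$.

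The remaining ingredient is the symmetrization identity $\mathbf{E}(V - V^{(j)})^2 = 2 \mathbf{E}\Var(V \mid X_{-j})$, where $V^{(j)}$ denotes $V$ with $X_j$ replaced by the independent copy $X_j'$. Conditional on $X_{-j}$, the random variables $V$ and $V^{(j)}$ are independent and identically distributed, being the same deterministic function of the independent copies $X_j$ and $X_j'$ placed in the $j$-th slot; the elementary identity $\mathbf{E}(Y - Y')^2 = 2\Var(Y)$ for independent copies, applied conditionally and then averaged, yields the claim. Combining everything gives $\Var(V) \le \frac{1}{2} \sum_j \mathbf{E}(V - V^{(j)})^2$. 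The only genuine subtlety is the pairwise $L^2$-orthogonality of the Hoeffding components, which is the single place independence of the $X_i$ enters the argument; everything else is bookkeeping.
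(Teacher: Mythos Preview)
Your proof is correct. The paper does not actually prove this theorem; it simply states it as a ``standard result'' and then applies it, so there is no proof in the paper to compare against. Your argument via the Doob martingale and the Hoeffding (ANOVA) decomposition is one of the standard routes to Efron--Stein with the sharp constant $\tfrac12$: the orthogonality of the components $E_SV$ (which, as you note, is exactly where independence enters) gives $\mathbf{E}\Delta_j^2=\sum_{S:\max S=j}\|E_SV\|_2^2\le\sum_{S\ni j}\|E_SV\|_2^2=\mathbf{E}\Var(V\mid X_{-j})$, and the conditional symmetrization identity converts this into the resampling form. One minor remark: the Jensen-only martingale argument (writing $\Delta_j=\mathbf{E}[V-V^{(j)}\mid X_1,\ldots,X_j]$ and applying conditional Jensen) loses the factor $\tfrac12$, so your detour through the ANOVA decomposition is genuinely needed for the stated constant.
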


To apply \nameref{thm:efronstein}, we need a way to write our field
as a function of many independent variables, each of which has only
a small effect on the weight of a crossing. We can divide $\overline{\mathfrak{R}}$
into $9KL$ disjoint dyadic $S\times S$ sub-boxes, which we will
label $\mathfrak{\mathfrak{C}}_{1},\ldots,\mathfrak{\mathfrak{C}}_{9KL}$
in arbitrary order. Write $Y_{\mathfrak{R}}$ as a function of independent
random variables $Z_{1},\ldots,Z_{9KL}$ as in \criref{localrandomness}.
For $i=1,\ldots,9KL$, write $Y^{\mathfrak{C}_{i}}$ for the field
$Y$ with $Z_{i}$ resampled. \thmref{efronstein} implies that
\begin{equation}
\Var(\Psi_{\mathrm{LR}}(\mathfrak{R}))\le\frac{1}{2}\sum_{i=1}^{9KL}\mathbf{E}[\Psi_{\mathrm{LR}}(\mathfrak{R})-\Psi_{\mathrm{LR}}(\mathfrak{R};Y^{\mathfrak{\mathfrak{C}}_{i}})]^{2}.\label{eq:efronsteinapp-1}
\end{equation}
We will bound the terms on the right-hand side of \eqref{efronsteinapp-1}
individually in \lemref{resample-gff}. But first we need the following
lemma about the effect of resampling a box on the field distant from
that box. Let $\mathfrak{D}_{i}=\overline{\mathfrak{C}_{i}}\cap\mathfrak{R}$
and $\Xi(x,i)=Y^{\mathfrak{C}_{i}}(x)-Y(x)$.
\begin{lem}
\label{lem:ximaxbound}Define $\Xi^{*}(i)=\sup_{x\in\mathfrak{B}\setminus\mathfrak{D}_{i}}\Xi(x,i).$
Then for any $a\ge1$, there is a constant $C_{a}$ (as always, independent
of the scale) so that $\mathbf{E}|\Xi^{*}(i)|^{a}\le C_{a}.$
\end{lem}

\begin{proof}
The Borell–TIS inequality (see, for example, \cite[Theorem 7.1]{L01},
\cite[Theorem 6.1]{biskup} or \cite[Theorem 2.1]{A90}), applied
in light of \eqref{resamplevar}, tells us that there is a constant
$C$ so that
\[
\mathbf{P}(\left|\Xi^{*}(i)-\mathbf{E}\Xi^{*}(i)\right|\ge u)\le2e^{-\frac{u^{2}}{2C}}.
\]
Thus we are done as long as we can bound $\Xi^{*}(i)$ by a constant
independent of the scale. We do this using Fernique's inequality.
By \eqref{resamplediffvar} we have a constant $C$ so that
\[
\Var(\Xi(x,i)-\Xi(y,i))\le\frac{CS^{2}}{\left[(K\wedge L)S\right]^{4}}\|x-y\|^{2}=\frac{C\|x-y\|^{2}}{(K\wedge L)^{4}S^{2}}.
\]
Therefore, for a typical point $x$, we have by Fernique's inequality
(\cite{Fer75}, \cite[Theorem 4.1]{A90}, or \cite[Theorem 6.6]{biskup},
as applied in \cite[Lemma 3.5]{BDZ14}) that there exists a constant
$C'$, independent of $S$, so that $\mathbf{E}\Xi^{*}(i)\le C'$.
\end{proof}
\begin{lem}
\label{lem:resample-gff}For each $i$, let $E_{i}$ be the event
that $\pi_{\mathrm{LR}}(\mathfrak{R})\cap\mathfrak{D}_{i}\ne\emptyset.$
Then we have
\begin{multline}
\mathbf{E}\left[\Psi_{\mathrm{LR}}(\mathfrak{R};Y^{\mathfrak{\mathfrak{C}}_{i}})-\Psi_{\mathrm{LR}}(\mathfrak{R})\right]^{2}\le4\mathbf{E}\left(\Psi_{\partial}(\mathfrak{D}_{i};Y^{\mathfrak{C}_{i}})\mathbf{1}_{E_{i}}\right)^{2}\\
+o_{K,L}(1)\mathbf{E}\Psi_{\mathrm{hard}}([0,S)\times[0,2S))^{2}.\label{eq:resample-gff-result}
\end{multline}
\end{lem}

\begin{proof}
To begin, note that since $Y$ and $Y^{\mathfrak{\mathfrak{C}}_{i}}$
are exchangeable, we have 
\begin{equation}
\mathbf{E}[\Psi_{\mathrm{LR}}(\mathfrak{R})-\Psi_{\mathrm{LR}}(\mathfrak{R};Y^{\mathfrak{\mathfrak{C}}_{i}})]^{2}=2\mathbf{E}[0\vee(\Psi_{\mathrm{LR}}(\mathfrak{R})-\Psi_{\mathrm{LR}}(\mathfrak{R};Y^{\mathfrak{\mathfrak{C}}_{i}}))]^{2}.\label{eq:exchangeable}
\end{equation}
Let $\pi=\pi_{\text{\ensuremath{\mathrm{LR}}}}(\mathfrak{R})$. On
the occurrence of $E_{i}$, put $\pi=\pi_{0}\cup\pi_{1}$, where $\pi_{0}$
is the part of $\pi$ between the first time $\pi$ enters $\mathfrak{D}_{i}$
and the last time $\pi$ exits $\mathfrak{D}_{i}$, and $\pi_{1}$
is the (generally non-contiguous) set of all other vertices of $\pi$.

Note that
\[
\Psi_{\mathrm{LR}}(\mathfrak{R};Y^{\mathfrak{\mathfrak{C}}_{i}})=\inf_{\pi'}\sum_{x\in\pi'}\exp(\gamma Y^{\mathfrak{C}_{i}}(x)),
\]
where $\pi'$ ranges over all left–right crossings of $\mathfrak{R}$.
We claim that 
\begin{equation}
\Psi_{\mathrm{LR}}(\mathfrak{R};Y^{\mathfrak{\mathfrak{C}}_{i}})-\Psi_{\mathrm{LR}}(\mathfrak{R})\le\sum_{x\in\pi_{1}}e^{\gamma Y(x)}[e^{\gamma\Xi(x,i)}-1]+\Psi_{\partial}(\mathfrak{D}_{i};Y^{\mathfrak{C}_{i}})\mathbf{1}_{E_{i}}.\label{eq:resample1-1}
\end{equation}
We prove \eqref{resample1-1} by considering separately the situations
in which $E_{i}$ does and does not occur.

\emph{Case 1. }On the event $E_{i}$, we have
\begin{align*}
\Psi_{\mathrm{LR}}(\mathfrak{R};Y^{\mathfrak{\mathfrak{C}}_{i}}) & =\inf_{\pi'}\sum_{x\in\pi'}\exp(\gamma Y^{\mathfrak{C}_{i}}(x))\le\psi(\pi_{0};Y^{\mathfrak{C}_{i}})+\Psi_{x^{*},y^{*}}(\mathfrak{D}_{i};Y^{\mathfrak{C}_{i}}),
\end{align*}
where $\pi'$ ranges over all left–right crossings of $\mathfrak{R}$
and $x^{*}$ and $y^{*}$ are the first and last vertices of $\pi_{1}$,
respectively. Therefore,
\begin{align*}
\Psi_{\mathrm{LR}}(\mathfrak{R};Y^{\mathfrak{\mathfrak{C}}_{i}})-\mathrlap{\Psi_{\mathrm{LR}}(\mathfrak{R};Y)}\\
 & \le\psi(\pi_{1};Y^{\mathfrak{C}_{i}})+\Psi_{x^{*},y^{*}}(\mathfrak{D}_{i};Y^{\mathfrak{C}_{i}})-\psi(\pi_{1};Y)-\psi(\pi_{0};Y)\\
 & \le\psi(\pi_{1};Y^{\mathfrak{C}_{i}})+\Psi_{\partial}(\mathfrak{D}_{i};Y^{\mathfrak{C}_{i}})-\psi(\pi_{1};Y)\\
 & \le\Psi_{\partial}(\mathfrak{D}_{i};Y^{\mathfrak{C}_{i}})+\sum_{x\in\pi_{1}}e^{\gamma Y(x)}[1-e^{\gamma\Xi(x,i)}].
\end{align*}

\emph{Case 2. }If $E_{i}$ does not occur, then we note that since
$\pi$ is a path not passing through $\mathfrak{D}_{i}$,
\begin{align*}
\psi(\pi;Y)-\psi(\pi;Y^{\mathfrak{C}_{i}})=\sum_{x\in\pi}[e^{\gamma Y(x)}-e^{\gamma Y^{\mathfrak{C}_{i}}(x)}] & =\sum_{x\in\pi}[e^{\gamma Y(x)}-e^{\gamma Y^{\mathfrak{C}_{i}}(x)}]\\
 & =\sum_{x\in\pi}e^{\gamma Y(x)}[1-e^{\gamma\Xi(x,i)}],
\end{align*}
so we can write
\[
\inf_{\pi'}\sum_{x\in\pi'}\exp(\gamma Y^{\mathfrak{C}_{i}}(x))\le\sum_{x\in\pi}\exp(\gamma Y^{\mathfrak{C}_{i}}(x))=\Psi_{\mathrm{LR}}(\mathfrak{R})+\sum_{x\in\pi}e^{\gamma Y(x)}[1-e^{\gamma\Xi(x,i)}].
\]

The two cases together imply \eqref{resample1-1}. Now, combining
\eqref{exchangeable} and \eqref{resample1-1}, we have
\begin{align*}
\mathbf{E}[\mathrlap{[\Psi_{\mathrm{LR}}(\mathfrak{R})-\Psi_{\mathrm{LR}}(\mathfrak{R};Y^{\mathfrak{\mathfrak{C}}_{i}})]\vee0]^{2}}\\
 & \le\mathbf{E}\left[\sum_{x\in\pi_{1}}e^{\gamma Y(x)}[[e^{\gamma\Xi(x,i)}-1]\vee0]+\Psi_{\partial}(\mathfrak{D}_{i};Y^{\mathfrak{C}_{i}})\mathbf{1}_{E_{i}}\right]^{2}\\
 & \le2\mathbf{E}\left(\sum_{x\in\pi_{1}}e^{\gamma Y(x)}[[e^{\gamma\Xi(x,i)}-1]\vee0]\right)^{2}+2\mathbf{E}\left(\Psi_{\partial}(\mathfrak{D}_{i};Y^{\mathfrak{C}_{i}})^{2}\mathbf{1}_{E_{i}}\right).
\end{align*}
Considering the first term further, we have
\[
\sum_{x\in\pi_{1}}e^{\gamma Y(x)}[[e^{\gamma\Xi(x,i)}-1]\vee0]\le\Psi_{\mathrm{LR}}(\mathfrak{R})\cdot\sup_{x\in\mathfrak{R}\setminus\overline{\mathfrak{C}_{i}}}[[e^{\gamma\Xi_{i}^{*}}-1]\vee0],
\]
where $\Xi_{i}^{*}=\sup\limits _{x\in\mathfrak{R}\setminus\overline{\mathfrak{C}_{i}}}\Xi(x,i)$.
By Hölder's inequality, we have
\begin{align*}
\mathbf{E}\mathrlap{\left[\Psi_{\mathrm{LR}}(\mathfrak{R})\cdot\sup_{x\in\mathfrak{R}\setminus\overline{\mathfrak{C}_{i}}}[[e^{\gamma\Xi_{i}^{*}}-1]\vee0]\right]^{2}}\\
 & \le\left(\mathbf{E}\sup_{x\in\mathfrak{R}\setminus\overline{\mathfrak{C}_{i}}}[[e^{\gamma\Xi_{i}^{*}}-1]^{3/2}\vee0]\right)^{4/3}\left(\mathbf{E}\Psi_{\mathrm{LR}}(\mathfrak{R})^{3}\right)^{2/3}\\
 & \le o_{K,L}(1)\mathbf{E}\Psi_{\mathrm{hard}}([0,S)\times[0,2S))^{2}.
\end{align*}
with the second inequality by \eqref{crossmomentapriori} and \lemref{ximaxbound}.
Then \eqref{resample-gff-result} follows.
\end{proof}
Now we can prove our variance bound.
\begin{proof}[Proof of \thmref{varbound-1}]
Let $q'\in(q_{\mathrm{pl}},1)$. Note that we can split the event
$\Psi_{\partial}(\mathfrak{D}_{i};Y^{\mathfrak{C}_{i}})^{2}\mathbf{1}_{E_{i}}$
into cases as follows:
\begin{align}
\Psi_{\partial}(\mathrlap{\mathfrak{D}_{i};Y^{\mathfrak{C}_{i}})^{2}\mathbf{1}_{E_{i}}}\label{eq:diamtailapp1}\\
 & =\Psi_{\partial}(\mathfrak{D}_{i};Y^{\mathfrak{C}_{i}})^{2}\mathbf{1}_{E_{i}}\mathbf{1}\{\Psi_{\partial}(\mathfrak{D}_{i};Y^{\mathfrak{C}_{i}})\ge u\Theta_{\mathrm{easy}}(\mathfrak{D}_{i};Y^{\mathfrak{C}_{i}})[q']\}\nonumber \\
 & \qquad+\Psi_{\partial}(\mathfrak{D}_{i};Y^{\mathfrak{C}_{i}})^{2}\mathbf{1}_{E_{i}}\mathbf{1}\{\Psi_{\partial}(\mathfrak{D}_{i};Y^{\mathfrak{C}_{i}})<u\Theta_{\mathrm{easy}}(\mathfrak{D}_{i};Y^{\mathfrak{C}_{i}})[q']\}\nonumber \\
 & \le\Psi_{\partial}(\mathfrak{D}_{i};Y^{\mathfrak{C}_{i}})^{2}\mathbf{1}\{\Psi_{\partial}(\mathfrak{D}_{i};Y^{\mathfrak{C}_{i}})\ge u\Theta_{\mathrm{easy}}(\mathfrak{D}_{i};Y^{\mathfrak{C}_{i}})[q']\}\nonumber \\
 & \qquad+u^{2}\Theta_{\mathrm{easy}}(\mathfrak{D}_{i};Y^{\mathfrak{C}_{i}})[q']^{2}\mathbf{1}_{E_{i}}.\nonumber 
\end{align}
Moreover, we have by \eqref{relatescale} and \propref{diamtail},
as long as $u$ is sufficiently large,
\begin{align}
\mathbf{E}\mathrlap{\left[\Psi_{\partial}(\mathfrak{D}_{i};Y^{\mathfrak{C}_{i}})^{2}\mathbf{1}\{\Psi_{\partial}(\mathfrak{D}_{i};Y^{\mathfrak{C}_{i}})\ge u\Theta_{\mathrm{easy}}(\mathfrak{D}_{i};Y^{\mathfrak{C}_{i}})[q']\}\right]}\label{eq:diamtailapp2}\\
 & \le(1+o(1))\mathbf{E}\left[\Psi_{\partial}(\mathfrak{D}_{i})^{2}\mathbf{1}\{\Psi_{\partial}(\mathfrak{D}_{i})\ge\tfrac{1}{2}u\Theta_{\mathrm{easy}}(\mathfrak{D}_{i})[q_{\mathrm{pl}}]\}\right]\nonumber \\
 & \le O_{\alpha}(1)\cdot\Theta_{\mathrm{easy}}(\mathfrak{D}_{i})[q_{\mathrm{pl}}]^{2}\cdot\int_{u/2}^{\infty}v^{2-\alpha}\,dv\nonumber \\
 & =O_{\alpha}(1)\cdot\Theta_{\mathrm{easy}}(\mathfrak{D}_{i})[q_{\mathrm{pl}}]^{2}\cdot u^{3-\alpha}.\nonumber 
\end{align}
Also, by \eqref{concentration-quantiles}, as long as $\delta$ is
sufficiently small we have
\begin{equation}
\Theta_{\mathrm{easy}}(\mathfrak{D}_{i};Y^{\mathfrak{C}_{i}})[q']^{2}\le O(1)\cdot\Theta_{\mathrm{easy}}(\mathfrak{D}_{i};Y^{\mathfrak{C}_{i}})[q_{\mathrm{pl}}]^{2}.\label{eq:diamtailapp3}
\end{equation}
Combining \lemref{resample-gff}, \eqref{diamtailapp1}, \eqref{diamtailapp2},
\eqref{diamtailapp3}, and \propref{crossinglengthexp}, and assuming
that $K$ and $L$ are sufficiently large and $\delta,\gamma$ sufficiently
small, we have
\begin{align*}
\frac{1}{2}\mathrlap{\sum_{i=1}^{9KL}\mathbf{E}[\Psi_{\mathrm{LR}}(\mathfrak{R};Y)-\Psi_{\mathrm{LR}}(\mathfrak{R};Y^{\mathfrak{\mathfrak{C}}_{i}})]^{2}}\\
 & \le\frac{1}{2}\sum_{i=1}^{9KL}4\left(\mathbf{E}\left(\Psi_{\partial}(\mathfrak{D}_{i};Y^{\mathfrak{C}_{i}})^{2}\mathbf{1}_{E_{i}}\right)+o_{K,L}(1)\mathbf{E}\Psi_{\mathrm{LR}}(\mathfrak{R})^{2}\right)\\
 & \le\sum_{i=1}^{9KL}O_{\alpha}(1)\Theta_{\mathrm{easy}}(\mathfrak{D}_{i})[q_{\mathrm{pl}}]^{2}u^{3-\alpha}+\frac{1}{2}u^{2}\Theta_{\mathrm{easy}}([0,3S)^{2})[q_{\mathrm{pl}}]^{2}\mathbf{E}M_{\mathrm{LR};S}(\mathfrak{R})\\
 & \qquad\qquad+o_{K,L}(1)\mathbf{E}\Psi_{\mathrm{hard}}([0,S)\times[0,2S))^{2}\\
 & \le O_{\beta}(1)KL\Theta_{\mathrm{easy}}([0,3S)^{2})[q_{\mathrm{pl}}]^{2}u^{-\beta}+C_{\mathrm{CL}}Ku^{2}\Theta_{\mathrm{easy}}([0,3S)^{2})[q_{\mathrm{pl}}]^{2}\\
 & \qquad\qquad+o_{K,L}(1)\mathbf{E}\Psi_{\mathrm{hard}}([0,S)\times[0,2S))^{2},
\end{align*}
where $\beta=\alpha-3$. Then if we put $u=L^{1/\beta}$, then we
obtain
\begin{align*}
\Var\Psi_{\mathrm{LR}}(\mathfrak{R}) & \le\frac{1}{2}\sum_{i=1}^{9KL}\mathbf{E}[\Psi_{\mathrm{LR}}(\mathfrak{R})-\Psi_{\mathrm{LR}}(\mathfrak{R};Y^{\mathfrak{\mathfrak{C}}_{i}})]^{2}\\
 & \le K\Theta_{\mathrm{easy}}([0,3S)^{2})[q_{\mathrm{pl}}]^{2}[C_{\mathrm{CL}}L^{2/\beta}+O_{\beta}(1)]\\
 & \qquad\qquad+o_{K,L}(1)\mathbf{E}\Psi_{\mathrm{hard}}([0,S)\times[0,2S))^{2}.
\end{align*}
Then \eqref{varbound-1} follows from another application of \eqref{concentration-main},
along with the hypothesis on the coefficient of variation and \thmref{rsw}
and \lemref{relatequantiles} to bound the last term in the last equation.
\end{proof}

\subsection{Coefficient of variation}

Armed with our inductive upper bound on crossing distance variance
from the previous subsection, and inductive lower bound on expected
crossing distance from \secref{lowerbounds}, we are now ready to
work towards a proof of \thmref{maintheorem} by induction.
\begin{lem}
\label{lem:inductivestep}There is a $\delta_{0}>0$ so that if $0<\delta<\delta_{0}$
then the following holds. Fix a scale $S=2^{s}$. Suppose that
\[
\CV^{2}(\Psi_{\mathrm{LR}}(\mathfrak{A}))<\delta
\]
for all $\mathfrak{A}\subseteq[0,S)\times[0,2S)$ of aspect ratio
between $1/2$ and $2$, inclusive. If $K$ is sufficiently large
compared to $\delta$ and $K/2\le L\le2K$ and $\gamma$ is sufficiently
small compared to $\delta$, $K$, and $L$, then if $\mathfrak{R}=[0,KS)\times[0,LS)$,
we have
\[
\CV^{2}(\Psi_{\mathrm{LR}}(\mathfrak{R}))<\delta.
\]
\end{lem}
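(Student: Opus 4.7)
The plan is to combine Theorem~\ref{thm:varbound-1} (variance upper bound) with Corollary~\ref{cor:quantile-easy-relation} (expectation lower bound): the variance grows only as $K L^{2/\beta}$ times a squared easy-crossing quantile, while the expectation grows as $K$ times that quantile, so the CV$^2$ ratio is controlled by $L^{2/\beta}/K$, which for $\beta>2$ and $L\le 2K$ is as small as we wish by taking $K$ large.

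\emph{Step 1 (verify the hypotheses of Theorem~\ref{thm:varbound-1}).} Fix $\beta>2$ and take $\delta_0$ small enough that $\delta<\delta_0$ forces $\delta<\delta_{\mathrm{RSW}}$, $\delta<\delta_{\Var}$, $\delta<\delta_{\diam}$, and that $\delta$ is below the analogous small constants appearing in Propositions~\ref{prop:crossinglengthexp} and~\ref{prop:diamtail}. The inductive hypothesis provides $\CV^2$ control on $\Psi_{\mathrm{LR}}$ for all dyadic subrectangles of $[0,S)\times[0,2S)$. Rotation-invariance of the Dirichlet GFF on rectangles transfers this to $\CV^2$ control on $\Psi_{\mathrm{easy}}$ for dyadic subboxes of $[0,2S)^2$ with aspect ratio in $[1/2,2]$. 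For the remaining subboxes of $[0,3S)^2$ (non-dyadic, or with dimensions strictly between $2S$ and $3S$), a constant number of applications of Corollaries~\ref{cor:stretch} and~\ref{cor:squish}, combined with the quantile-comparison inequality~\eqref{eq:concentration-quantiles} (which uses the CV$^2$ hypothesis itself), transfer the CV$^2$ bound up to an absolute multiplicative factor; after possibly shrinking $\delta_0$, this still lies below $\delta_{\Var}$. Theorem~\ref{thm:varbound-1} then yields
\[
\Var(\Psi_{\mathrm{LR}}(\mathfrak{R})) \;\le\; C_{\Var}\, K\, L^{2/\beta}\, Q^2 \;+\; o_{K,L}(1)\,\bigl(\mathbf{E}\Psi_{\mathrm{LR}}(\mathfrak{R})\bigr)^2,
\]
where we set $Q := \Theta_{\mathrm{easy}}([0,3S)^2)[q_{\mathrm{pl}}]$, which is equivalent up to absolute constants to the $\mathbf{E}\Theta_{\mathrm{easy}}([0,3S)^2)$ appearing in Theorem~\ref{thm:varbound-1} via~\eqref{eq:concentration-main}.

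\emph{Step 2 (expectation lower bound).} Choose $p\in(0,\min\{p_{\mathrm{pl}},q_{\mathrm{pl}}\}]$ with $2d_{\mathrm{pass}}^2\sqrt{p}<1$. The second display of Corollary~\ref{cor:quantile-easy-relation}, applied with $\mathfrak{A}=[0,S)\times[0,2S)$, gives, for $K$ sufficiently large and $\gamma$ sufficiently small,
\[
\mathbf{E}\Psi_{\mathrm{LR}}(\mathfrak{R}) \;\ge\; \tfrac{K}{12u_0}\,\Theta_{\mathrm{easy}}([0,S)\times[0,2S))[p].
\]
Applying~\eqref{eq:concentration-quantiles} to pass from the $[p]$-quantile to the $[q_{\mathrm{pl}}]$-quantile, then a constant number of gluings via Corollary~\ref{cor:stretch} (with one more application of~\eqref{eq:concentration-quantiles}) to pass from scale $S\times 2S$ up to scale $3S\times 3S$, produces a constant $c_1>0$ with $\mathbf{E}\Psi_{\mathrm{LR}}(\mathfrak{R})\ge c_1 K Q$.

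\emph{Step 3 (combine).} Dividing the variance bound by the square of the expectation bound,
\[
\CV^2(\Psi_{\mathrm{LR}}(\mathfrak{R})) \;\le\; \frac{C_{\Var}\, K\, L^{2/\beta}\, Q^2}{c_1^2\, K^2\, Q^2} \;+\; o_{K,L}(1) \;\le\; C_2\,\frac{L^{2/\beta}}{K} \;+\; o_{K,L}(1).
\]
Since $L\le 2K$ and $\beta>2$, we have $L^{2/\beta}/K \le 2^{2/\beta} K^{2/\beta-1}\to 0$ as $K\to\infty$. Choosing $K$ large enough (depending on $\delta$) to make the first term at most $\delta/2$, then $\gamma$ small enough (depending on $\delta,K,L$) to make the $o_{K,L}(1)$ term at most $\delta/2$, yields $\CV^2(\Psi_{\mathrm{LR}}(\mathfrak{R}))<\delta$, as required.

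\emph{Main obstacle.} The principal technical difficulty is Step~1: the inductive hypothesis furnishes $\CV^2$ control only for dyadic $\Psi_{\mathrm{LR}}$ at scales $\le 2S$, while Theorem~\ref{thm:varbound-1} demands $\CV^2$ on $\Psi_{\mathrm{easy}}$ for all rectangles of aspect ratio in $[1/2,2]$ up to side length $3S$. Crossing this small scale gap, changing between $\mathrm{LR}$ and easy directions, and handling non-dyadic shapes without blowing up the $\CV^2$ constant is exactly where the quantile-comparison inequality~\eqref{eq:concentration-quantiles} becomes essential: it converts the CV$^2$ hypothesis into ratios of quantiles that are controlled by absolute constants, so that the stretch/squish comparisons only degrade CV$^2$ by an absolute factor, which $\delta_0$ is chosen to absorb. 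Once Step~1 is in place, Steps~2 and~3 are essentially arithmetic.
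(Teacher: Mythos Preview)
Your approach is essentially identical to the paper's: apply \thmref{varbound-1} for the variance, \corref{quantile-easy-relation} for the expectation, and divide to get $\CV^2\le C L^{2/\beta}/K + o_{K,L}(1)$, then take $K$ large and $\gamma$ small. The paper in fact glosses over the hypothesis-matching issue you isolate in Step~1, simply invoking \thmref{varbound-1} directly.

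One caveat on your Step~1 mechanism: Corollaries~\ref{cor:stretch} and~\ref{cor:squish} together with \eqref{eq:concentration-quantiles} compare \emph{quantiles} (and hence, via \eqref{eq:concentration-main}, means) across shapes and scales; they do not by themselves compare \emph{variances}, so they cannot literally ``transfer the $\CV^2$ bound up to an absolute multiplicative factor'' to a new box. The reason the gap is harmless is different: if you trace through the proof of \thmref{varbound-1} (via Propositions~\ref{prop:crossinglengthexp} and~\ref{prop:diamtail} and the RSW theorem), the $\CV^2$ hypothesis is only ever invoked at dyadic scales $\le S$ and for boxes of aspect ratio $1/2$, $1$, or $2$, which is exactly what the lemma's inductive hypothesis provides (up to translation and the $[0,3S)^2$ versus $[0,S)\times[0,2S)$ container, which is immaterial since the law of the field on a box depends only on its dimensions). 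So the hypothesis of \thmref{varbound-1} is stated more broadly than what its proof actually consumes.
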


\begin{proof}
By \thmref{varbound-1}, if $K$ and $L$ are sufficiently large,
we have
\begin{multline*}
(1-o_{K,L}(1))\cdot\Var(\Psi_{\mathrm{LR}}(\mathfrak{R}))-o_{K,L}(1)\left(\mathbf{E}\Psi_{\mathrm{LR}}(\mathfrak{R})\right)^{2}\\
\le C_{\Var}\cdot K\cdot L^{2/\beta}\cdot\left(\mathbf{E}\Psi_{\mathrm{easy}}([0,3S)^{2};Y^{\mathfrak{C}_{i}})\right)^{2}.
\end{multline*}
Moreover, by \corrref{quantile-easy-relation}, we have
\[
\mathbf{E}\Psi_{\mathrm{LR}}(\mathfrak{R})\ge\frac{K}{2u_{0}}\Theta_{\mathrm{easy}}(\mathfrak{A})[p_{\mathrm{RSW}}]\cdot\left(1-C_{\mathrm{p}}L\left(2d_{\mathrm{p}}^{2}\sqrt{p_{\mathrm{RSW}}}\right)^{K}-o_{K,L}(1)\right),
\]
so (again recalling \eqref{prswsmall}) if $K$ and $L$ are sufficiently
large and $\gamma$ is sufficiently small then we have
\[
\mathbf{E}\Psi_{\mathrm{LR}}(\mathfrak{R})\ge\frac{K}{4u_{0}}\Theta_{\mathrm{easy}}(\mathfrak{A})[p_{\mathrm{RSW}}].
\]
Therefore, we have a constant $C$ so that
\begin{align*}
\CV^{2}(\Psi_{\mathrm{LR}}(\mathfrak{R})) & =\frac{\Var(\Psi_{\mathrm{LR}}(\mathfrak{R}))}{\left(\mathbf{E}\Psi_{\mathrm{LR}}(\mathfrak{R})\right)^{2}}\\
 & \le\frac{C_{\Var}KL^{2/\beta}\left(\mathbf{E}\Psi_{\mathrm{easy}}(\mathfrak{A})\right)^{2}}{(1-o_{K,L}(1))\cdot K^{2}\left(\Theta_{\mathrm{easy}}(\mathfrak{A})[p_{\mathrm{RSW}}]\right)}+o_{K,L}(1)\\
 & \le\frac{CL^{2/\beta}}{K}+o_{K,L}(1).
\end{align*}
If we choose $K$ sufficiently large compared to $\delta$, and $\beta$
sufficiently large, then this yields $\CV^{2}(\Psi_{\mathrm{LR}}(\mathfrak{R}))<\delta$
for all $K/2\le L\le2K$.
\end{proof}
\begin{lem}
\label{lem:basecase}For a \emph{fixed} scale $S_{0}$, we have $\CV^{2}(\Psi_{\mathrm{LR}}(\mathfrak{A}))=o_{S_{0}}(1)$
for all $\mathfrak{A}\subset[0,S_{0})\times[0,2S_{0}).$
\end{lem}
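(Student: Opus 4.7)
The plan is to exploit the fact that at a fixed finite scale $S_0$, the FPP weight depends continuously on $\gamma$ in an $L^2$ sense and collapses to a deterministic graph-distance quantity at $\gamma=0$; hence $\CV^2(\Psi_{\mathrm{LR}}(\mathfrak{A}))$ is a continuous function of $\gamma$ that vanishes at $\gamma=0$.

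First I would note that since $S_0$ is fixed, the ambient box $[0,S_0)\times[0,2S_0)$ has at most $2S_0^2$ vertices, and the pointwise variances of the GFF are bounded by a constant $V(S_0)$ depending only on $S_0$. For any self-avoiding path $\pi$ in a subbox $\mathfrak{A}\subseteq[0,S_0)\times[0,2S_0)$, an elementary moment calculation using $\mathbf{E}e^{\gamma Y(x)}=e^{\gamma^{2}\Var Y(x)/2}$ and the analogous second-moment identity would give
\[
\mathbf{E}\psi(\pi;Y)=|\pi|+O_{S_0}(\gamma^{2}),\qquad \Var\psi(\pi;Y)=O_{S_0}(\gamma^{2}),
\]
so that $\psi(\pi;Y)\to|\pi|$ in $L^{2}$ as $\gamma\to 0$, with the rate controlled only by $S_0$ and $|\pi|\le 2S_0^2$.

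Next, because $\mathfrak{A}$ is contained in a box of bounded size, the collection of self-avoiding left\textendash right crossings of $\mathfrak{A}$ has some finite cardinality $N(S_0)<\infty$. Taking minima preserves $L^{2}$-convergence over a finite index set via $|\min_i X_i-\min_i Y_i|\le\max_i|X_i-Y_i|$, so it follows that
\[
\mathbf{E}\Psi_{\mathrm{LR}}(\mathfrak{A})\to L^{*}(\mathfrak{A}):=\min_{\pi}|\pi|,\qquad \Var\Psi_{\mathrm{LR}}(\mathfrak{A})\to 0 \text{ as } \gamma\to 0,
\]
with the min ranging over the finitely many LR crossings of $\mathfrak{A}$. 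Any LR crossing must contain at least the width of $\mathfrak{A}$-many vertices, so $L^{*}(\mathfrak{A})\ge 1$ and $\mathbf{E}\Psi_{\mathrm{LR}}(\mathfrak{A})$ is bounded below by a positive constant uniformly in $\gamma$ small. Dividing gives $\CV^{2}(\Psi_{\mathrm{LR}}(\mathfrak{A}))=o_{S_0}(1)$, and since the number of distinct subboxes $\mathfrak{A}$ of $[0,S_0)\times[0,2S_0)$ is itself finite, I would pass to the worst-case $\mathfrak{A}$ to obtain the bound uniformly.

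There is no genuine obstacle here; the lemma is the base case of the induction in \thmref{maintheorem}, and its content is simply that at a fixed finite scale the model becomes essentially deterministic as $\gamma\to 0$, since the Gaussian weights $e^{\gamma Y(x)}$ concentrate at $1$ in every $L^p$ and there are only finitely many paths to minimize over.
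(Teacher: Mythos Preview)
Your proof is correct and rests on the same idea as the paper's: at a fixed finite scale the weights $e^{\gamma Y(x)}$ concentrate at $1$ as $\gamma\to 0$, so the crossing weight becomes essentially deterministic. The execution differs slightly. You argue by enumerating all self-avoiding left\textendash right crossings (finitely many, since $S_0$ is fixed), showing each $\psi(\pi;Y)\to|\pi|$ in $L^2$, and then using the Lipschitz property of the minimum to pass to $\Psi_{\mathrm{LR}}$. The paper instead avoids any path enumeration: it bounds $\mathbf{E}\Psi_{\mathrm{LR}}(\mathfrak{A})^2$ from above by the second moment of a \emph{single} straight-line crossing $\pi_0$ (which equals $S^2+o_{S_0}(1)$), and bounds $\mathbf{E}\Psi_{\mathrm{LR}}(\mathfrak{A})$ from below by $S\cdot\mathbf{E}\min_{x\in\mathfrak{A}}e^{\gamma Y(x)}=S+o_{S_0}(1)$, then reads off $\CV^2\le\frac{S^2+o_{S_0}(1)-(S+o_{S_0}(1))^2}{(S+o_{S_0}(1))^2}=o_{S_0}(1)$ directly. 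Your route is a bit more conceptual (it identifies the limiting constant as the graph distance); the paper's is shorter and sidesteps the combinatorics of counting paths.
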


\begin{proof}
Without loss of generality, let $\mathfrak{A}=[0,S]\times[0,T]$.
We note that $\Psi_{\mathrm{LR}}(\mathfrak{A})\le\psi(\pi_{0};Y_{\mathfrak{A}}),$
where $\pi_{0}$ is a straight-line path across $\mathfrak{A}$. Therefore,
\[
\mathbf{E}\Psi_{\mathrm{LR}}(\mathfrak{A})^{2}\le\mathbf{E}\psi(\pi_{0};Y_{\mathfrak{A}})^{2}=S^{2}+o_{S_{0}}(1).
\]
On the other hand,
\[
\Psi_{\mathrm{LR}}(\mathfrak{A})\ge S\min_{x\in\mathfrak{A}}\exp(\gamma Y(x)),
\]
so
\[
\mathbf{E}\Psi_{\mathrm{LR}}(\mathfrak{A})\ge S\mathbf{E}\left[\min_{x\in\mathfrak{A}}\exp(\gamma Y(x))\right]=S+o_{S_{0}}(1).
\]
Therefore,
\[
\CV^{2}\left(\Psi_{\mathrm{LR}}(\mathfrak{A})\right)\le\frac{\mathbf{E}\Psi_{\mathrm{LR}}(\mathfrak{A})^{2}-\left(\mathbf{E}\Psi_{\mathrm{LR}}(\mathfrak{A})\right)^{2}}{\left(\mathbf{E}\Psi_{\mathrm{LR}}(\mathfrak{A})\right)^{2}}=o_{S_{0}}(1).\qedhere
\]
\end{proof}
We have now assembled all of the pieces necessary for the proof of
\thmref{maintheorem}.
\begin{proof}[Proof of \thmref{maintheorem}]
Apply \lemref{basecase} for some $S_{0}>K$, with $K$ chosen large
enough compared to $\delta$ to satisfy the assumptions of \lemref{inductivestep}.
Then inductively applying \lemref{inductivestep} allows us to bound
the coefficient of variation of every box of the given aspect ratios.
\end{proof}

\section{Subsequential limits of FPP metrics\label{sec:limits}}

All of the necessary estimates in hand, we now proceed to establish
existence and continuity properties of the scaling limit metrics of
Liouville FPP.

\subsection{Tightness and subsequential convergence}

As a corollary of \thmref{maintheorem}, we will derive a tightness
result for the first-passage percolation metric, properly scaled.

For each $S=2^{s}$, let $\mathfrak{R}_{s}=[0,S)^{2}$. For $x,y\in[0,1]_{\mathbf{R}}^{2}\cap\frac{1}{2^{s}}\mathbf{Z}^{2}$,
let 
\[
d_{s}(x,y)=\frac{\Psi_{Sx,Sy}(\mathfrak{R}_{s})}{\Theta_{\mathrm{easy}}(\mathfrak{R}_{s})[q_{\mathrm{pl}}]}.
\]
For arbitrary $x,y\in[0,1]_{\mathbf{R}}^{2}$, define $d_{s}(x,y)$
by linear interpolation, namely (as in (4.2) of \cite{miermont})
\begin{align}
d_{s}(x,y) & =(\lceil Sx\rceil-Sx)(\lceil Sy\rceil-Sy)d_{s}(\tfrac{1}{S}\lfloor Sx\rfloor,\tfrac{1}{S}\lfloor Sy\rfloor)\nonumber \\
 & \qquad+(\lceil Sx\rceil-Sx)(Sy-\lfloor Sy\rfloor)d_{s}(\tfrac{1}{S}\lfloor Sx\rfloor,\tfrac{1}{S}\lceil Sy\rceil)\nonumber \\
 & \qquad+(Sy-\lfloor Sx\rfloor)(\lceil Sy\rceil-Sy)d_{s}(\tfrac{1}{S}\lceil Sx\rceil,\tfrac{1}{S}\lfloor Sy\rfloor)\nonumber \\
 & \qquad+(Sy-\lfloor Sx\rfloor)(Sy-\lfloor Sy\rfloor)d_{s}(\tfrac{1}{S}\lceil Sx\rceil,\tfrac{1}{S}\lceil Sy\rceil).\label{eq:linear-interpolation}
\end{align}

\begin{thm}
\label{thm:metrictightness}If $\gamma$ is sufficiently small, then
the sequence $\{d_{s}\}_{s\in\mathbf{N}}$ is tight in the Gromov–Hausdorff
topology.
\end{thm}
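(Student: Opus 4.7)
The plan is to invoke Gromov's precompactness criterion: a family of compact metric spaces is precompact in the Gromov--Hausdorff topology iff the diameters are uniformly bounded and, for every $r>0$, the $r$-covering numbers are uniformly bounded. Since every $d_{s}$ lives on the common compact space $[0,1]^{2}$, it suffices to show that, uniformly in $s$ with high probability, $d_{s}$ has bounded diameter and is H\"older continuous with respect to the Euclidean metric; covers of $[0,1]^{2}$ by small Euclidean balls then give $d_{s}$-covers at the corresponding scale.

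\emph{Step 1 (diameter).} Apply \propref{diamtail} with $\alpha=2$ to a rectangle $[0,S)\times[0,2S)$ containing $\mathfrak{R}_{s}$; the hypotheses---CV bounds on hard crossings at smaller scales and on easy crossings of subboxes---are furnished by \thmref{maintheorem}. This yields, uniformly in $s$,
\[
\mathbf{P}\bigl(\Psi_{\max}(\mathfrak{R}_{s})\ge u\,\Theta_{\mathrm{easy}}(\mathfrak{R}_{s})[q_{\mathrm{pl}}]\bigr)\le C\,u^{-2},
\]
which after normalization reads $\mathbf{P}(\diam_{d_{s}}([0,1]^{2})\ge u)\le Cu^{-2}$, giving tightness of the diameter.

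\emph{Step 2 (modulus of continuity).} Fix $k\in\mathbf{N}$ and cover $[0,1]^{2}$ by the $4^{k}$ dyadic Euclidean squares of side $2^{-k}$; these correspond to $4^{k}$ dyadic subboxes $\mathfrak{B}\subset\mathfrak{R}_{s}$ of side $S2^{-k}$. For each $\mathfrak{B}$ (enlarged to a portrait $1\times2$ rectangle if needed), apply \propref{diamtail} with a large exponent $\alpha$ and take a union bound:
\[
\mathbf{P}\Bigl(\max_{\mathfrak{B}}\Psi_{\max}(\mathfrak{B})\ge u\max_{\mathfrak{B}}\Theta_{\mathrm{easy}}(\mathfrak{B})[q_{\mathrm{pl}}]\Bigr)\le4^{k}C_{\alpha}u^{-\alpha}.
\]
To compare the small-box quantile with the full-scale one, combine \propref{exponential-crossings} (which gives $\Theta_{\mathrm{easy}}(\mathfrak{B})[p_{\mathrm{pl}}]\le C_{\mathrm{pl}}a_{\mathrm{pl}}^{k}\Theta_{\mathrm{easy}}(\mathfrak{R}_{s})[q_{\mathrm{pl}}]$) with the quantile comparison \eqref{eq:concentration-quantiles} (applicable because \thmref{maintheorem} gives a CV bound on each $\mathfrak{B}$) to promote the $p_{\mathrm{pl}}$-quantile on $\mathfrak{B}$ to a $q_{\mathrm{pl}}$-quantile at the cost of a multiplicative constant. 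A short chaining across at most four adjacent subboxes containing $x$ and $y$ then yields
\[
\sup_{|x-y|\le2^{-k}}d_{s}(x,y)\le C''u\,a_{\mathrm{pl}}^{k}
\]
outside an event of probability at most $4^{k}C_{\alpha}u^{-\alpha}$.

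\emph{Step 3 (Borel--Cantelli and conclusion).} Choose $\alpha$ large enough that there exists $\rho\in(a_{\mathrm{pl}},4^{-1/\alpha})$, and set $u_{k}=\rho^{-k}$. Then $\sum_{k}4^{k}u_{k}^{-\alpha}<\infty$ while $u_{k}a_{\mathrm{pl}}^{k}\to0$ geometrically. Summing the bad events over $k$ yields: outside an event of arbitrarily small probability uniform in $s$, for every $k$ we have $\sup_{|x-y|\le2^{-k}}d_{s}(x,y)\le C(a_{\mathrm{pl}}/\rho)^{k}$, i.e.\ $d_{s}(x,y)\le C|x-y|^{\beta}$ for some $\beta>0$. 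Combined with Step~1, this yields uniformly bounded diameters together with a uniform H\"older modulus, hence uniformly bounded $r$-covering numbers, so Gromov's criterion gives Gromov--Hausdorff tightness. The main obstacle is the quantile bookkeeping of Step~2: \propref{exponential-crossings} supplies only the low $p_{\mathrm{pl}}$ quantile on small boxes, while \propref{diamtail} is phrased in terms of the higher $q_{\mathrm{pl}}$ quantile, so the CV bound from \thmref{maintheorem} must be invoked to bridge the two on every small scale simultaneously; this forces $\gamma$ to be chosen small once and for all so that the CV bound holds uniformly in scale.
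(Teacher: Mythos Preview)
Your overall strategy matches the paper's: both arguments establish, on a high-probability event uniform in $s$, a H\"older modulus for $d_s$ with respect to the Euclidean metric, and then deduce Gromov--Hausdorff tightness (the paper via the compact embedding of H\"older spaces into $C^0$, you via Gromov's covering criterion; these are equivalent here).

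There is, however, a genuine gap in Step~2. When you apply \propref{diamtail} to a small subbox $\mathfrak{B}$, by the paper's notational convention (\subsecref{notation}) the quantity $\Psi_{\max}(\mathfrak{B})$ is the diameter computed in the field $Y_{\mathfrak{B}}$, i.e.\ the GFF with Dirichlet boundary conditions on $\overline{\mathfrak{B}}$. But the metric $d_s$ is built from the full-scale field $Y_{\mathfrak{R}_s}$, and the ratio $e^{\gamma Y_{\mathfrak{R}_s}(x)}/e^{\gamma Y_{\mathfrak{B}}(x)}$ is \emph{not} uniformly bounded over all dyadic $\mathfrak{B}$: by \eqref{tailgff} its tail is controlled only by $\exp\bigl(-\Omega(1)(\log u)^2/k\bigr)$, where $k=\log_2|\mathfrak{R}_s/\mathfrak{B}|$. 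For $u$ fixed this error tends to $1$ as $k\to\infty$, so your displayed union bound does not, as written, control $d_s$-diameters. The paper handles exactly this in the proof of \propref{holdersquares}: after applying \propref{diamtail} at the small scale and doing the quantile comparison, it inserts the coarse-field error via \eqref{tailgff} and lets the threshold grow like $a_{\mathrm{pl}}^{-c k}$ so that $(\log u)^2/k$ grows linearly in $k$ and this additional term is summable. Your choice $u_k=\rho^{-k}$ would in fact survive the correction (since then $(\log u_k)^2/k\asymp k$), so the repair is local; but the step you omitted is not cosmetic, and without it the inequality in Step~2 is about the wrong field.
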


Note that the first part of \thmref{subseqconv} follows from \thmref{metrictightness}
by Prokhorov's theorem.
\begin{prop}
\label{prop:holdersquares}There exists $\xi>0$ so that, if $\gamma$
is sufficiently small then for any $\varepsilon>0$, there exists
$C(\varepsilon)>0$ such that, for each $S=2^{s}$, the probability
is at most $\varepsilon$ that there exists a dyadic square $\mathfrak{C}\subset[0,1]_{\mathbf{R}}^{2}$
such that $\diam_{d_{s}}(\mathfrak{C}\cap\frac{1}{S}\mathbf{Z}^{2})\ge C(\varepsilon)(\diam_{\|\cdot\|_{\infty}}\mathfrak{C})^{\xi}$,
where $\|\cdot\|_{\infty}$ denotes the max norm.
\end{prop}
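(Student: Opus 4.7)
The strategy is, for each dyadic sub-square $\mathfrak{C}\subseteq[0,1]_{\mathbf{R}}^{2}$, to apply the polynomial diameter tail of \propref{diamtail} to the corresponding sub-box of $\mathfrak{R}_{s}$, convert the resulting sub-scale easy-crossing quantile to $\Theta_{\mathrm{easy}}(\mathfrak{R}_{s})[q_{\mathrm{pl}}]$ by the geometric bound of \propref{exponential-crossings}, and union-bound over all dyadic squares with a scale-dependent threshold. The $\CV$-hypotheses of \propref{diamtail} hold uniformly in the scale by \thmref{maintheorem}. The main obstacle will be balancing the $4^{t}$-fold union bound at level $t$ against the $a_{\mathrm{pl}}^{t}$-decay so that the resulting H\"older exponent stays positive.

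Concretely, identify a dyadic square $\mathfrak{C}$ of side $2^{-t}$ with a sub-box $\mathfrak{C}'\subseteq\mathfrak{R}_{s}$ of side $2^{s-t}$. Paths inside $\mathfrak{C}'$ remain paths in $\mathfrak{R}_{s}$, so
\[
\diam_{d_{s}}(\mathfrak{C}\cap\tfrac{1}{S}\mathbf{Z}^{2})\le\Psi_{\max}(\mathfrak{C}';Y_{\mathfrak{R}_{s}})/\Theta_{\mathrm{easy}}(\mathfrak{R}_{s})[q_{\mathrm{pl}}].
\]
Splitting $\mathfrak{C}'$ into two $1\!:\!2$ portrait halves $\mathfrak{H}_{1},\mathfrak{H}_{2}$ gives the subadditivity $\Psi_{\max}(\mathfrak{C}';Y_{\mathfrak{R}_{s}})\le\sum_{j}\Psi_{\max}(\mathfrak{H}_{j};Y_{\mathfrak{R}_{s}})$. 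The coarse-field discrepancy between $Y_{\mathfrak{R}_{s}}$ and the intrinsic $Y_{\mathfrak{H}_{j}}$ is controlled by \eqref{tailgff}: with probability $\ge1-\exp(-\Omega((\log v_{t})^{2}/t))$, the multiplicative gap is at most $v_{t}$. Then \propref{diamtail} applied inside $\mathfrak{H}_{j}$ yields
\[
\mathbf{P}\!\left(\Psi_{\max}(\mathfrak{H}_{j};Y_{\mathfrak{H}_{j}})\ge u_{t}\,\Theta_{\mathrm{easy}}(\mathfrak{H}_{j})[q_{\mathrm{pl}}]\right)\le C(\alpha)u_{t}^{-\alpha}
\]
for arbitrarily large $\alpha$ (with $\gamma$ small in terms of $\alpha$), and \propref{exponential-crossings} together with the quantile conversion \eqref{concentration-quantiles} (valid under the small-$\CV$ hypothesis) gives $\Theta_{\mathrm{easy}}(\mathfrak{H}_{j})[q_{\mathrm{pl}}]\le Ca_{\mathrm{pl}}^{t}\Theta_{\mathrm{easy}}(\mathfrak{R}_{s})[q_{\mathrm{pl}}]$. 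Combining, on a good event of probability at least $1-O(u_{t}^{-\alpha}+\exp(-\Omega((\log v_{t})^{2}/t)))$ per square, $\diam_{d_{s}}(\mathfrak{C})\le Cv_{t}u_{t}a_{\mathrm{pl}}^{t}$.

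Set $u_{t}=2^{at}$ and let $\log v_{t}$ be linear in $t$ with slope $b\log2$ chosen large enough (depending on the absolute constant in \eqref{tailgff}) that $4^{t}\exp(-\Omega((\log v_{t})^{2}/t))$ is summable; choose $\alpha$ with $\alpha a>2$ so that $4^{t}u_{t}^{-\alpha}$ is also summable. The total failure probability over all dyadic squares is then $\le\varepsilon$ once constants are absorbed into $C(\varepsilon)$, and on its complement $\diam_{d_{s}}(\mathfrak{C})\le C(\varepsilon)(2^{a+b}a_{\mathrm{pl}})^{t}=C(\varepsilon)(2^{-t})^{\xi}$ with $\xi=\log_{2}(1/a_{\mathrm{pl}})-a-b$. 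The main obstacle is arranging $\xi>0$, i.e., $a+b<\log_{2}(1/a_{\mathrm{pl}})$: this is achievable because the scale parameter $K$ in \propref{exponential-crossings} can be taken large enough to push $a_{\mathrm{pl}}$ arbitrarily close to $1/2$ (so $\log_{2}(1/a_{\mathrm{pl}})$ is close to $1$), while $\gamma$ small lets $\alpha$ be chosen large (so $a$ is small), leaving room above the fixed additive contribution $b$ from \eqref{tailgff}. This produces the claimed H\"older bound with some $\xi>0$. The only remaining check is the degenerate regime $t>s$, where every dyadic square of side $2^{-t}$ meets $\tfrac{1}{S}\mathbf{Z}^{2}$ in at most one point and the diameter is trivially zero.
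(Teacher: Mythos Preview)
Your approach is essentially the paper's: apply \propref{diamtail} to each dyadic sub-box, convert the sub-scale quantile to the top-scale one via \propref{exponential-crossings} (plus \eqref{concentration-quantiles}), handle the coarse-field discrepancy with \eqref{tailgff}, and union-bound. The paper does not split $\mathfrak{C}'$ into two halves but applies the diameter tail directly; that is cosmetic. The substantive difference is only in how the thresholds are parametrized: the paper sets $u=v\,a_{\mathrm{pl}}^{\beta(s-t)}$ and then compares to $v\,a_{\mathrm{pl}}^{\beta'(s-t)}$ with $0<\beta'<\beta<1$, which yields $\xi=\beta'\log_{2}(1/a_{\mathrm{pl}})$ \emph{automatically} positive, whereas you arrive at $\xi=\log_{2}(1/a_{\mathrm{pl}})-a-b$ and must then argue $a+b<\log_{2}(1/a_{\mathrm{pl}})$.

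Your justification for that last inequality is the one place that is shaky as written. You treat $b$ as a ``fixed additive contribution'' determined by an ``absolute constant'' in \eqref{tailgff}, and then compensate by pushing $a_{\mathrm{pl}}$ toward $1/2$ via the free parameter $K$ in \propref{exponential-crossings}. But $\log_{2}(1/a_{\mathrm{pl}})<1$ always, so if $b$ really were an absolute constant there would be no guarantee it is below $1$. The point you are missing is that the $\Omega(1)$ in \eqref{tailgff} is (by the paper's convention) taken as $\gamma\to 0$, and from its derivation it is of order $1/\gamma^{2}$; hence for $\gamma$ small the required $b$ can be taken arbitrarily small. Once you say this, $a+b<\log_{2}(1/a_{\mathrm{pl}})$ follows without touching $a_{\mathrm{pl}}$. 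The paper's parametrization sidesteps the issue entirely, since the coarse-field threshold $a_{\mathrm{pl}}^{-(\beta-\beta')(s-t)}$ is already built to give the summability once $\gamma$ is small, and the resulting exponent $\beta'\log_{2}(1/a_{\mathrm{pl}})$ is manifestly positive.
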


\begin{proof}
Let $\mathfrak{B}=[0,S)^{2}$ and let $\mathfrak{C}$ be a dyadic
$T\times T$ square contained in $\mathfrak{B}$ where $T=2^{t}$.
By \propref{diamtail}, as long as $\delta$ is sufficiently small
(and $\gamma$ is chosen small enough, in particular so that \thmref{maintheorem}
holds for $\delta$) we have a $C$ (independent of the scale) so
that
\[
\mathbf{P}\left(\Psi_{\max}(\mathfrak{C})\ge u\Theta_{\mathrm{easy}}(\mathfrak{C})[q_{\mathrm{pl}}]\right)\le Cu^{-\alpha}.
\]
for any dyadic square $\mathfrak{C}\subset\mathfrak{B}$. This means
that, using \propref{exponential-crossings} and \eqref{concentration-quantiles},
we have
\begin{align*}
\mathbf{P}\mathrlap{\left(\Psi_{\max}(\mathfrak{C})\ge u\Theta_{\mathrm{easy}}(\mathfrak{B})[q_{\mathrm{pl}}]\right)}\\
 & =\mathbf{P}\left(\Psi_{\max}(\mathfrak{C})\ge u\frac{\Theta_{\mathrm{easy}}(\mathfrak{B})[q_{\mathrm{pl}}]}{\Theta_{\mathrm{easy}}(\mathfrak{C})[q_{\mathrm{pl}}]}\Theta_{\mathrm{easy}}(\mathfrak{C})[q_{\mathrm{pl}}]\right)\\
 & \le C'_{\alpha}u^{-\alpha}a_{\mathrm{pl}}^{\alpha(s-t)}.
\end{align*}
(Recall from \propref{exponential-crossings} that $a_{\mathrm{pl}}\in(0,1)$,
so the right-hand is a \emph{decreasing} function of $s-t$.) Putting
\[
u=va_{\mathrm{pl}}^{\beta(s-t)}
\]
for some $\beta\in(0,1)$ to be chosen, this yields 
\[
\mathbf{P}\left(\Psi_{\max}(\mathfrak{C})\ge va_{\mathrm{pl}}^{\beta(s-t)}\Theta_{\mathrm{easy}}(\mathfrak{B})[q_{\mathrm{pl}}]\right)\le C'_{\alpha}v^{-\alpha}a_{\mathrm{pl}}^{\alpha(1-\beta)(s-t)}.
\]
Moreover, we have, for $0<\beta'<\beta$, (using \eqref{tailgff})
\begin{align*}
\mathbf{P}\mathrlap{\left(\Psi_{\max}(\mathfrak{C};Y_{\mathfrak{B}})\ge va_{\mathrm{pl}}^{\beta'(s-t)}\Theta_{\mathrm{easy}}(\mathfrak{B})[q_{\mathrm{pl}}]\right)}\\
 & \le\mathbf{P}\left(\Psi_{\max}(\mathfrak{C})\ge\sqrt{v}a_{\mathrm{pl}}^{\beta(s-t)}\Theta_{\mathrm{easy}}(\mathfrak{B})[q_{\mathrm{pl}}]\right)\\
 & \qquad\qquad+\exp\left(-\omega(1)\cdot\frac{\left(\log(\sqrt{v}\cdot a_{\mathrm{pl}}^{(\beta-\beta')(s-t)})\right)^{2}}{s-t}\right)\\
 & \le C'_{\alpha}v^{-\alpha/2}a_{\mathrm{pl}}^{\alpha(1-\beta)(s-t)}+\exp\left(-\omega(1)\cdot\left[(\beta-\beta')^{2}(s-t)+\log v\right]\right).
\end{align*}

Therefore, using a union bound, the probability that there exists
a dyadic square $\mathfrak{C}\subset\mathfrak{B}$ such that $\Psi_{\max}(\mathfrak{C};X)\ge va_{\mathrm{pl}}^{\beta'(s-t)}\Theta_{\mathrm{easy}}(\mathfrak{B})[q_{\mathrm{pl}}]$
is bounded by
\[
C'_{\alpha}v^{-\alpha/2}\sum_{t=0}^{s}4^{s-t}\left(a_{\mathrm{pl}}^{\alpha(1-\beta)(s-t)}+\exp\left(-\omega(1)\left[(\beta-\beta')^{2}(s-t)+\log v\right]\right)\right).
\]
If we choose $\alpha$ large enough and $\gamma$ small enough (but
both fixed), then the sum on the right is bounded in $s$, and so
the right-hand side can be made arbitrarily small, uniformly in $s$,
by increasing $v$. Now note that
\[
a_{\mathrm{pl}}^{\beta'(s-t)}=e^{-\beta'\log_{2}(T/S)\log a_{\mathrm{pl}}}=e^{-\beta'\log(T/S)\log_{2}a_{\mathrm{pl}}}=(T/S)^{\beta'\log_{2}(1/a_{\mathrm{pl}})}.
\]
Therefore, the probability is at most $C''_{\alpha}v^{-\alpha/2}$
that there exists a dyadic square $\mathfrak{C}\subset[0,1]_{\mathbf{R}}^{2}$,
of side length at least $1/S$, such that $\diam_{d_{s}}(\mathfrak{C}\cap\frac{1}{S}\mathbf{Z}^{2})\ge v(\diam_{\|\cdot\|_{\infty}}\mathfrak{C})^{\beta'\log_{2}(1/a_{\mathrm{pl}})}$.
Since this independent of $S$, the proof is complete (with $\xi=\beta'\log_{2}(1/a_{\mathrm{pl}})$
and $C(\varepsilon)=v$ chosen large enough so that $C''_{\alpha}v^{-\alpha/2}<\varepsilon$).
\end{proof}
\begin{cor}
\label{corr:allincompact}There exists a $\xi>0$ so that if $\gamma$
is sufficiently small then the following holds. For any $\varepsilon>0$,
there exists exists $C(\varepsilon)>0$ such that, for each $S=2^{s}$,
the probability is at most $\varepsilon$ that there exists a dyadic
square $\mathfrak{C}\subset[0,1]_{\mathbf{R}}^{2}$ such that $\diam_{d_{s}}(\mathfrak{C})\ge C(\varepsilon)(\diam_{\|\cdot\|_{\infty}}\mathfrak{C})^{\xi}$.
\end{cor}

\begin{proof}
Hölder conditions are preserved under the linear interpolation scheme~\eqref{linear-interpolation}.
\end{proof}
\begin{cor}
\label{corr:twoboxes}If $\gamma$ is sufficiently small then the
following holds. For any $\varepsilon>0$, there exists $C'(\varepsilon)>0$
such that, for each $S=2^{s}$, we have
\[
\mathbf{P}\left(\text{there exist \ensuremath{x,y\in[0,1]_{\mathbf{R}}^{2}}\text{ s.t. }}d_{s}(x,y)\ge C'(\varepsilon)\cdot\|x-y\|_{\infty}^{\xi}\right)\le\varepsilon
\]
with $\xi$ as above.
\end{cor}

\begin{proof}
Any two $x,y\in[0,1]_{\mathbf{R}}^{2}$ are contained within one or
two adjacent dyadic boxes of side length at most twice $\|x-y\|_{\infty}$.
Then the result follows from \corrref{allincompact}.
\end{proof}
We are now ready to prove our theorem.
\begin{proof}[Proof of \thmref{metrictightness}]
By \corrref{twoboxes} and the compact embedding of Hölder spaces,
for each $\varepsilon>0$ and $\xi'<\xi$ there is a compact set $A_{\varepsilon}$
in the Holder-$\xi$' topology of Hölder-$\xi$ functions on $[0,1]^{4}$
so that $\mathbf{P}(d_{s}\not\in A_{\varepsilon})<\varepsilon$. Since
the Gromov–Hausdorff topology is weaker than the uniform topology,
which is in turn weaker than the Hölder-$\xi$ topology (see for example
\cite[Proposition 3.3.2]{miermont}), $A_{\varepsilon}$ is also compact
in the Gromov–Hausdorff topology. This implies that $\{d_{s}\}$ is
tight with respect to the Gromov–Hausdorff topology.
\end{proof}

\subsection{Hölder-continuity of limiting metrics\label{subsec:holder}}

In this section we prove that $[0,1]_{\mathbf{R}}^{2}$, equipped
with the topology induced by any limit point metric, is homeomorphic
to $[0,1]_{\mathbf{R}}^{2}$ with the standard topology by a Hölder-continuous
homeomorphism with Hölder-continuous inverse. In fact, one of the
necessary maps was obtained in the coarse of the proof in the previous
section. The other direction follows from a similar chaining argument,
but using lower bounds instead of upper bounds.
\begin{prop}
\label{prop:holdercts}Any limit point of $\{d_{s}\}$ is almost surely
Hölder-$\xi'$ continuous with respect to the Euclidean metric for
any $\xi'<\xi$ as in \propref{holdersquares}.
\end{prop}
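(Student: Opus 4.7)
The plan is to upgrade the Gromov--Hausdorff convergence of a subsequence to a stronger mode of convergence, namely uniform (or even Hölder) convergence of $d_s$ viewed as a random function on $[0,1]_{\mathbf{R}}^2 \times [0,1]_{\mathbf{R}}^2$, and then observe that the pointwise Hölder bound passes to uniform limits. The key input is already in hand: Corollary~\ref{cor:twoboxes} furnishes, for every $\varepsilon>0$, a deterministic constant $C'(\varepsilon)<\infty$ so that, uniformly in $s$,
\[
\mathbf{P}\!\left(d_s \in K^\varepsilon\right) \ge 1-\varepsilon, \quad \text{where } K^\varepsilon = \{f \in C([0,1]_{\mathbf{R}}^4) : \|f\|_{C^{0,\xi}} \le C'(\varepsilon)\}.
\]
By the Arzelà--Ascoli theorem $K^\varepsilon$ is relatively compact in $C([0,1]^4)$ with the uniform topology, and by the compact embedding of Hölder spaces $C^{0,\xi}\hookrightarrow C^{0,\xi'}$ for $\xi' < \xi$, it is relatively compact in $C^{0,\xi'}([0,1]^4)$ as well. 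Thus the laws of $\{d_s\}$ are tight in $C^{0,\xi'}$.

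Given a subsequence $\{s_k\}$ along which $d_{s_k}$ converges in Gromov--Hausdorff law to a limit $d$, apply Prokhorov's theorem in $C^{0,\xi'}$ to extract a further subsequence (still denoted $\{s_k\}$) along which $d_{s_k}$ converges in law to some random $\tilde d$ in the $C^{0,\xi'}$-topology. By the Skorokhod representation theorem we may assume, after passing to a suitable probability space, that this convergence is almost sure. Since each $d_{s_k}$ is a genuine metric on $[0,1]^2$ satisfying the uniform Hölder-$\xi$ bound, and these properties are stable under $C^{0,\xi'}$-limits, the limit $\tilde d$ is almost surely a pseudo-metric on $[0,1]^2$ lying in $C^{0,\xi'}$.

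To conclude, one identifies $\tilde d$ with the GH-limit $d$. Uniform convergence of metrics on the common underlying set $[0,1]^2$ implies Gromov--Hausdorff convergence of the induced (quotient) metric spaces, since the identity map on $[0,1]^2$ realizes a correspondence whose distortion is bounded by $2\|d_{s_k}-\tilde d\|_\infty \to 0$. By uniqueness of Gromov--Hausdorff limits, $d$ and $\tilde d$ agree in distribution (up to isometry), so $d$ inherits the Hölder-$\xi'$ bound with respect to the Euclidean metric on $[0,1]^2$.

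The main technical subtlety to be wary of is the identification step: a priori, $\tilde d$ could be a proper pseudo-metric, collapsing distinct Euclidean points, in which case the GH-limit would be a genuine quotient of $[0,1]^2$ rather than a metric on $[0,1]^2$ itself. This possibility is not excluded by the one-sided Hölder upper bound we use here, but the present proposition only asserts the upper bound $d(x,y)\le C\|x-y\|^{\xi'}$, which is preserved in either case. Ruling out the collapse (equivalently, producing a Hölder-continuous inverse homeomorphism) requires the matching lower bound on $d$ in terms of the Euclidean distance and is the content of the companion argument promised at the end of Subsection~\ref{sub:holder}.
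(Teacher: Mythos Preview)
Your proposal is correct and follows essentially the same approach as the paper: the paper's proof is the single line ``Follows from the proof of \thmref{metrictightness},'' and that proof already establishes tightness of $\{d_s\}$ in the H\"older-$\xi'$ topology on $C([0,1]^4)$ via \corref{twoboxes} and the compact embedding $C^{0,\xi}\hookrightarrow C^{0,\xi'}$, which is exactly the mechanism you spell out. Your additional care with the identification step (passing from the $C^{0,\xi'}$-limit $\tilde d$ back to the GH-limit $d$ via the identity correspondence) and your remark on the pseudo-metric subtlety are both apt and make explicit what the paper leaves implicit.
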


\begin{proof}
Follows from the proof of \thmref{metrictightness}.
\end{proof}
\begin{prop}
\label{prop:invholder}If $\gamma$ is sufficiently small, then there
exists a $\xi'>1$ so that for all $\varepsilon>0$ there exist $C(\varepsilon)>0$
such that for any scale $s$ we have
\[
\mathbf{P}\left(\text{there exist }(x,y)\in[0,1]_{\mathbf{R}}^{2}\text{ s.t. }d_{s}(x,y)\le\frac{1}{C(\varepsilon)}\|x-y\|_{\infty}^{\xi'}\right)\le\varepsilon.
\]
Moreover, we can take $\xi'\to1$ as $\gamma\to0$.
\end{prop}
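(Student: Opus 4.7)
The plan is to run a chaining argument analogous to that of \propref{holdersquares}, but bounding annulus crossing weights from \emph{below} using the lower-bound machinery of \secref{lowerbounds} in place of \propref{diamtail}. The geometric setup is as follows: any two points $x,y\in[0,1]_{\mathbf{R}}^2$ with $\|x-y\|_\infty\in[2^{-n-1},2^{-n}]$ are separated in $\mathfrak{R}_s$ by the annulus $\overline{\mathfrak{C}}\setminus \mathfrak{C}$ around $Sx$, where $\mathfrak{C}$ is a dyadic $2^t\times 2^t$ box at scale $t\approx s-n$ chosen so that $Sy\notin\overline{\mathfrak{C}}$; by \lemref{adjacent-pass}, any path from $Sx$ to $Sy$ must traverse a pass inside this annulus, so $\Psi_{Sx,Sy}(\mathfrak{R}_s)\geq \Psi_\partial(\mathfrak{C})$, where $\Psi_\partial(\mathfrak{C})$ denotes the minimum weight of such an annulus-crossing path.

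Two key estimates drive the argument. The first is a pointwise tail bound
\[
\mathbf{P}\bigl(\Psi_\partial(\mathfrak{C})\leq v^{-1}\,\Theta_{\mathrm{easy}}(\mathfrak{R}_t)[q_{\mathrm{pl}}]\bigr)\leq C\,v^{-\alpha}
\]
for any prescribed $\alpha$, obtained by a union bound over the $O(1)$ passes in the annulus, the coarse-field correction \eqref{tailgff}, and the fact that $\Psi_{\mathrm{easy}}(\mathfrak{R}_t)$ has a finite $p$-th moment for any $p$ once $\gamma$ is small enough (itself a consequence of \propref{diamtail}). The second is a matching quantile scaling
\[
\Theta_{\mathrm{easy}}(\mathfrak{R}_t)[q_{\mathrm{pl}}]\big/\Theta_{\mathrm{easy}}(\mathfrak{R}_s)[q_{\mathrm{pl}}] \geq c\,2^{-(s-t)(1+\rho)}
\]
for a small $\rho=\rho(\gamma)>0$ with $\rho\to 0$ as $\gamma\to 0$. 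The lower bound on $\Theta_{\mathrm{easy}}(\mathfrak{R}_t)$ is immediate from \propref{exponential-crossings}; the matching upper bound on $\Theta_{\mathrm{easy}}(\mathfrak{R}_s)$ must be obtained by iterating \corref{stretch} at a large chunk size $k$, absorbing the quantile shift $q_{\mathrm{pl}}\mapsto q_{\mathrm{pl}}^{1/(2k-1)}$ with a loss factor $O(k^\rho)$ coming from high-moment concentration via \propref{diamtail}.

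With both estimates in hand, substituting $v=u\,2^{\sigma(s-t)}$ (in direct analogy with the substitution $u=v\,a_{\mathrm{pl}}^{\beta(s-t)}$ from the proof of \propref{holdersquares}) and union-bounding over the $4^{s-t}$ dyadic sub-boxes at each scale $t\in[0,s]$ yields, with probability at least $1-\varepsilon$, the simultaneous lower bound $\Psi_\partial(\mathfrak{C})\geq c_\varepsilon\,2^{-(1+\rho+\sigma)(s-t)}\Theta_{\mathrm{easy}}(\mathfrak{R}_s)[q_{\mathrm{pl}}]$ for every dyadic $\mathfrak{C}\subset\mathfrak{R}_s$ (the geometric series in $s-t$ converges once $\alpha$ is large compared to $\sigma$). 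Applying this to the separating $\mathfrak{C}$ above and normalizing by $\Theta_{\mathrm{easy}}(\mathfrak{R}_s)[q_{\mathrm{pl}}]$, one obtains $d_s(x,y)\geq c'_\varepsilon\,\|x-y\|_\infty^{\xi'}$ with $\xi'=1+\rho+\sigma$; linear interpolation preserves the bound up to absolute constants. Since both $\rho$ and $\sigma$ can be made arbitrarily small by taking $\gamma$ sufficiently small, we may choose $\xi'>1$ with $\xi'\to 1$ as $\gamma\to 0$, as claimed.

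The main obstacle is the upper bound on $\Theta_{\mathrm{easy}}(\mathfrak{R}_s)[q_{\mathrm{pl}}]$ needed for the second estimate: this input does not follow directly from the tools already developed, and it is technically delicate because the quantile shift in \corref{stretch} becomes severe as $k\to\infty$, so only a high-moment concentration bound (available through \propref{diamtail}) avoids a loss that would prevent $\rho\to 0$.
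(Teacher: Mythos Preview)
Your proposal has two genuine gaps.

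First, the ``pointwise tail bound'' you claim,
\[
\mathbf{P}\bigl(\Psi_\partial(\mathfrak{C})\leq v^{-1}\,\Theta_{\mathrm{easy}}(\mathfrak{R}_t)[q_{\mathrm{pl}}]\bigr)\leq C\,v^{-\alpha},
\]
is a \emph{lower}-tail bound on a single easy-crossing weight, and it does not follow from \propref{diamtail}. That proposition controls the \emph{upper} tail of $\Psi_{\max}$ (hence positive moments of $\Psi_{\mathrm{easy}}$), not the left tail; and the $\CV$ bound from \thmref{maintheorem} only gives Chebyshev-type concentration, far short of polynomial decay with arbitrary exponent $\alpha$. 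The paper does not try to control a single annulus crossing. Instead it applies \propref{lowerbound-tail} directly in the large box $\mathfrak{R}_s$: a path joining $Sx$ to $Sy$ with $\|x-y\|_\infty\gtrsim (T/S)^{1-\beta}$ must cross at least $N\asymp (S/T)^\beta v$ passes at scale $T$ (via \propref{longpathpasses}), and \propref{lowerbound-tail} then gives a bound with the geometrically decaying factor $(2d_{\mathrm{pass}}^2\sqrt{p})^N$. It is the \emph{many} passes, not a tail bound on one, that produces the arbitrarily fast decay you need to beat the $4^{s-t}$ union bound.

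Second, what you flag as ``the main obstacle'' --- an upper bound on $\Theta_{\mathrm{easy}}(\mathfrak{R}_s)/\Theta_{\mathrm{easy}}(\mathfrak{R}_t)$ of order $(S/T)^{1+o(1)}$ --- is in fact already available and is not delicate. The paper simply combines \corref{polyupper} (which gives $\mathbf{E}\Psi_{\mathrm{hard}}(\mathfrak{R}_s)\le C b_{\mathrm{pl}}^{\,s-t}\mathbf{E}\Psi_{\mathrm{hard}}(\mathfrak{R}_t)$ with $b_{\mathrm{pl}}\to 1$ as $\gamma\to 0$), the RSW result \thmref{rsw}, and the $\CV$ bound via \eqref{concentration-main} to pass between means, hard quantiles, and easy quantiles. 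Your proposed workaround via iterated \corref{stretch} plus \propref{diamtail} is unnecessary and, as you note yourself, fragile because of the quantile shift; the route through \corref{polyupper} avoids this entirely.
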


\begin{proof}
We will use the notation $S=2^{s}$ and $T=2^{t}$ throughout. Let
$\mathfrak{R}=[0,S)^{2}$. Fix a scale $t<s$. Let $\mathfrak{A}_{t}=[0,T)\times[0,2T)$.
By \propref{lowerbound-tail}, for any $p\in(0,1/2)$ we have
\begin{multline*}
\mathbf{P}\left[\min_{|\mathcal{P}(\pi)|\ge N}\psi(\pi;Y_{\mathfrak{R}})\le\frac{N}{2u}\Theta_{\mathrm{easy}}(\mathfrak{A}_{t})[p]\right]\\
\le(S/T)^{2}\left[O(1)\left(2d_{\mathrm{p}}^{2}\sqrt{p}\right)^{N}+\exp\left(-\omega(1)\cdot\frac{(\log u)^{2}}{s-t}\right)\right],
\end{multline*}
where in the notation $\mathcal{P}(\pi)$ we consider passes of size
$2T\times T$ and $T\times2T$. Fixing $0<\beta'<\beta<1$ and summing
over all scales and putting $N=(S/T)^{\beta}v$, $u=(S/T)^{\beta'}v^{2}$,
this gives, whenever $u\ge u_{0}$,
\begin{multline*}
\mathbf{P}\left[(\exists t\in[0,s))\min_{|\mathcal{P}_{T}(\pi)|\ge\left(\frac{S}{T}\right)^{\beta}v}\psi(\pi;Y_{\mathfrak{R}})\le\tfrac{1}{2v}(\tfrac{S}{T})^{\beta-\beta'}\Theta_{\mathrm{easy}}(\mathfrak{A}_{t})[p]\right]\\
\le\sum_{t=0}^{s-1}(\tfrac{S}{T})^{2}\left[O(1)\left(2d_{\mathrm{p}}^{2}\sqrt{p}\right)^{(S/T)^{\beta}v}+e^{-\omega(1)\cdot(s-t+\log v)}\right].
\end{multline*}
As long as $v$ is large enough and $p$ and $\gamma$ are small enough,
the last sum is finite as $s\to\infty$ and goes to $0$, uniformly
in $s$, as $v\to\infty$.

By \corrref{polyupper}, \thmref{maintheorem}, and \thmref{rsw},
as long as $\gamma$ and $\delta$ are sufficiently small relative
to $p$ we have
\[
\frac{\Theta_{\mathrm{easy}}(\mathfrak{R})[p]}{\Theta_{\mathrm{easy}}(\mathfrak{A}_{t})[p]}\le C(S/T)^{1+o(1)}.
\]
Therefore, we obtain $\lim\limits _{v\to\infty}\mathbf{P}[E_{v}]=0$
uniformly in $s$, where $E_{v}$ is the event that there exists a
$t\in[0,s)$ such that
\begin{equation}
\min_{|\mathcal{P}(\pi)|\ge(S/T)^{\beta}v}\psi(\pi;Y_{\mathfrak{R}})\le\tfrac{1}{2v}\left(\tfrac{S}{T}\right)^{\beta-\beta'-1-o(1)}\Theta_{\mathrm{easy}}(\mathfrak{R})[p],\label{eq:Esprob}
\end{equation}
where again $\mathcal{P}(\pi)$ considers passes of size $2T\times T$
and $T\times2T$. Using the normalized metric $d_{s}$, we see that
$E_{v}$ contains the event that there exist a $t\in[0,s)$ and $x_{1},x_{2}\in[0,1]_{\mathbf{R}}^{2}\cap\frac{1}{S}\mathbf{Z}^{2}$
such that both
\[
\|x_{1}-x_{2}\|_{\infty}\ge c_{\mathrm{PD}}^{-1}v\left(\tfrac{T}{S}\right)^{1-\beta}
\]
and
\[
d_{s}(x_{1},x_{2})\le\tfrac{1}{2v}\left(\tfrac{T}{S}\right)^{1-(\beta-\beta')+o(1)}.
\]
This means that there are constants $C'$, $C''$ so that, with probability
going to $1$ as $v\to\infty$, for all $x_{1},x_{2}\in[0,1]_{\mathbf{R}}^{2}\cap\frac{1}{C'S}\mathbf{Z}^{2}$
we have
\begin{equation}
d_{s}(x_{1},x_{2})\ge\frac{C}{v^{2+\alpha+o(1)}}\|x_{1}-x_{2}\|_{\infty}^{1+\alpha+o(1)},\label{eq:inverseholder}
\end{equation}
where $\alpha=\beta'/(1-\beta)$. Since this property is preserved
(up to constants) by the linear interpolation, we in fact have \eqref{inverseholder}
for all $x_{1},x_{2}\in[0,1]_{\mathbf{R}}^{2}$ and all scales $s$.
By choosing $\beta,\beta'$ appropriately, we can make $\alpha$ arbitrarily
small as long as $\gamma$ is small enough. This completes the proof
of the proposition.
\end{proof}
\begin{prop}
\label{prop:invholdercts}Any limit point $d$ of $\{d_{s}\}$ almost
surely has the property that
\begin{equation}
d(x,y)\ge\frac{1}{C}\|x-y\|_{\infty}^{\xi'}\label{eq:invholderform}
\end{equation}
for some constant $\xi'\in(0,1)$ and some (random) $C$.
\end{prop}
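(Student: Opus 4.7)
The plan is to combine Proposition~\ref{prop:invholder}, which gives a uniform-in-$s$ lower-Hölder bound with overwhelming probability, with the tightness/convergence machinery used in the proof of Theorem~\ref{thm:metrictightness}. Define the functional
\[
F(d) = \sup_{\substack{x,y \in [0,1]_{\mathbf{R}}^{2} \\ x \ne y}} \frac{\|x-y\|_{\infty}^{\xi'}}{d(x,y)},
\]
so that the statement to be proved is equivalent to $F(d) < \infty$ almost surely. Proposition~\ref{prop:invholder} asserts precisely tightness of $\{F(d_s)\}_{s}$: for every $\varepsilon > 0$ there is a finite $C(\varepsilon)$ such that $\mathbf{P}(F(d_s) \le C(\varepsilon)) \ge 1-\varepsilon$ for all sufficiently large $s$.

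Suppose $d_{s_j} \to d$ is a convergent subsequence in the Gromov--Hausdorff sense, guaranteed by Theorem~\ref{thm:metrictightness}. By the Hölder compact embedding argument used in the proof of Theorem~\ref{thm:metrictightness} (via Proposition~\ref{prop:holdercts}), the $d_s$'s are in fact tight in the Hölder-$\xi''$ topology for any $\xi'' < \xi$; after passing to a further subsequence and applying the Skorokhod representation theorem, I would assume that $d_{s_j} \to d$ uniformly on $[0,1]_{\mathbf{R}}^{2}\times[0,1]_{\mathbf{R}}^{2}$ almost surely. Then $F$ is lower semicontinuous under this convergence: for each fixed pair $x \ne y$, $\|x-y\|_{\infty}^{\xi'}/d_{s_j}(x,y) \le F(d_{s_j})$, so passing to the limit in $j$ gives $\|x-y\|_{\infty}^{\xi'}/d(x,y) \le \liminf_{j} F(d_{s_j})$, and taking the sup over $x \ne y$ yields $F(d) \le \liminf_{j} F(d_{s_j})$. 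Equivalently, for every $K$, the sublevel set $\{F \le K\}$ is closed in the uniform topology on metrics.

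Applying the Portmanteau theorem with the closed event $\{F \le C(\varepsilon)\}$ and using Proposition~\ref{prop:invholder}, I obtain
\[
\mathbf{P}(F(d) \le C(\varepsilon)) \ge \limsup_{j} \mathbf{P}(F(d_{s_j}) \le C(\varepsilon)) \ge 1-\varepsilon
\]
for every $\varepsilon > 0$. Sending $\varepsilon \to 0$ gives $\mathbf{P}(F(d) < \infty) = 1$, and on this event one may take $C = F(d)$ as the random constant in \eqref{invholderform}.

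The only technical point requiring care is justifying that the Gromov--Hausdorff limit can be treated as a pseudometric on the fixed set $[0,1]_{\mathbf{R}}^{2}$ along which uniform convergence (rather than only convergence of isometric embeddings into some abstract ambient space) holds, so that the pointwise/uniform lower semicontinuity argument for $F$ is valid. This is handled exactly as in the proof of Theorem~\ref{thm:metrictightness}: Proposition~\ref{prop:holdercts} gives the uniform Hölder upper bound on $d_{s_j}$ which forces any Gromov--Hausdorff limit to descend to a continuous pseudometric on $[0,1]_{\mathbf{R}}^{2}$, upon which the tightness bound of Proposition~\ref{prop:invholder} applied to the approximants passes through the limit. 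No further substantial ideas beyond the pre-limit estimate of Proposition~\ref{prop:invholder} and standard weak-convergence/Hölder-compactness tools are needed.
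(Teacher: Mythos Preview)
Your argument is correct and follows essentially the same route as the paper: define the inverse-H\"older constant $F(d_s)=\sup_{x\ne y}\|x-y\|_\infty^{\xi'}/d_s(x,y)$, use Proposition~\ref{prop:invholder} for its tightness, pass to an almost-sure uniform limit via Skorokhod, and push the bound through. The only cosmetic difference is that the paper couples the pair $(d_s,F(d_s))$ and applies Skorokhod jointly so that $F(d_s)$ is automatically bounded along the convergent subsequence, whereas you instead observe that $\{F\le K\}$ is closed and invoke Portmanteau; these are equivalent packagings of the same lower-semicontinuity fact.
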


\begin{proof}
Let 
\[
C_{s}=\sup_{x,y\in[0,1]_{\mathbf{R}}^{2}}\frac{\|x-y\|_{\infty}^{\xi'}}{d_{s}(x,y)}.
\]
By \propref{invholder}, $C_{s}<\infty$ almost surely, and moreover
the sequence $\{C_{s}\}_{s}$ is tight. This means that the sequence
$\{(d_{s},C_{s})\}_{s}$, where the space of metrics is given the
uniform topology, is tight as well, so $\{(d_{s},C_{s})\}_{s}$ converges
along subsequences. By the Skorohod representation theorem (noting
that $C^{\infty}([0,1]^{4})\times\mathbf{R}$ is a separable Fréchet
space) we can put all of the $(d_{s},C_{s})$s on a common probability
space and get almost-sure convergence along subsequences. But convergence
along an almost-surely convergent subsequence preserves bounds of
the form \eqref{invholderform}, and such a bound holds for $d_{s}$
along any almost-surely convergent subsequence of $\{(d_{s},C_{s})\}_{s}$
since in such a case the $C_{s}$s will be bounded. Thus the proposition
is proved.
\end{proof}
The second statement of \thmref{subseqconv} is the combination of
the results of \propref{holdercts} and \propref{invholdercts}.

\bibliographystyle{imsart-number}
\bibliography{citations}

\end{document}